\newcommand{\bel}[1]{\begin{equation*}\label{#1}}
	\newcommand{\be}{\begin{equation}}
		\newcommand{\ba}{\begin{eqnarray}}
			\newcommand{\ea}{\end{eqnarray}}
		\newcommand{\qe}{\end{equation}}
	\newcommand{\Z}{{\mathbb Z}}
	\newcommand{\supp}{{\mathrm{supp}}}
	\newcommand{\eg}{\begin{example}}
		\newcommand{\egd}{\end{example}}
	\newcommand{\tm}{\begin{thm}}
		\newcommand{\tmd}{\end{thm}}
	\newcommand{\co}{\begin{coro}}
		\newcommand{\cod}{\end{coro}}
	\newcommand{\enu}{\begin{enumerate}}
		\newcommand{\enud}{\end{enumerate}}
	\newcommand{\rmk}{\begin{rem}}
		\newcommand{\rmkd}{\end{rem}}
	\theoremstyle{theorem}
	\newtheorem{thm}{Theorem}[section]
	\theoremstyle{example}
	\newtheorem{example}[thm]{Example}
	\newtheorem{coro}[thm]{Corollary}
	\theoremstyle{lemma}
	\newtheorem{lemma}[thm]{Lemma}
	\theoremstyle{definition}
	\newtheorem{defi}[thm]{Definition}
	\theoremstyle{proof}
	\theoremstyle{remark}
	\newtheorem{rem}[thm]{Remark}
	\theoremstyle{remark}
\begin{document}

		\title[scattering theory of the nonlinear wave equations on lattice graphs]{ scattering theory of the nonlinear wave equations on lattices}

		\author{Jiajun Wang}
		\address{Jiajun Wang: School of Mathematical Sciences,
			Fudan University, Shanghai 200433, China.}
		\email{21300180146@m.fudan.edu.cn}
		
		\begin{abstract}
			In this paper, I will summarize the uniform decay estimates of the discrete wave equations (DW) established by the oscillatory integral theory in \cite{1,2,3}, and combine the abstract framework of the scattering theory of the dispersive equations established in \cite{4} to finally establish the scattering theory of the discrete nonlinear wave equations (DNLW), including the existence of the wave operators and the asymptotic completeness. In addition, I will establish the scattering theory again by the Strichartz estimate.
		\end{abstract}
		\noindent{\textbf{Keywords:}} nonlinear dispersive equations; lattices; scattering theory\\

		\maketitle
		\numberwithin{equation}{section}
		\section{Introduction}
		\subsection{Background}
		Discrete equations have gathered a lot of interests both from mathematics and physics. 
		
		At the physical level, the discrete equations are models of many physical phenomena. For example, the discrete wave equation can describe the vibration of atoms in crystalline semiconductors \cite{12}, the discrete Schr\"{o}dinger equation can describe the non-exponential energy relaxation \cite{13} in solids, and the discrete heat equation can also reveal the law of interlayer thermal resistance and its dispersion properties \cite{14}. 
		
		At the mathematical level, the discrete equations also serve as discretizations of some conventional equations in continuous space, thereby providing a direct theoretical foundation for numerical approximation. For example, the continuum limit theory of PDEs is to establish that, after discretization, the solution of the discrete equations can approximate the solution of PDEs in the original continuous space. Concretely, for the fractional nonlinear Schr\"{o}dinger equations, \cite{15} established the continuum limit theory for 1 dimensional case, in a weak convergence sense; \cite{16} introduced the uniform Strichartz estimate to improve the previous result into a strong convergence sense; \cite{17} generalized the continuum limit theory into 2 dimension, with much more difficult tools in the oscillatory integral theory; the author continued to extend the continuum limit theory into 3 dimension in \cite{8}.
		
		This paper is to study the scattering theory of DNLW. Compared with my research on the scattering theory of the discrete nonlinear Schr\"{o}dinger equations in \cite {18}, the study of DNLW is more difficult. Specifically, the propagator of DW will produce singularity, and we can't use the separation of variables to analyze the corresponding oscillatory integrals. In fact, using basic knowledge of the oscillatory integrals, we can get the optimal uniform decay rate of the solution to the discrete Schr\"{o}dinger equation. At present, the uniform decay estimate of the solution to DW is only carried out to 5D \cite{1,2,3}. Therefore, the use of the Strichartz estimate in DNLW will be more complicated and  cumbersome. This directly leads to greater difficulties in establishing the scattering results of DNLW. 
		\vspace{10pt}
		\subsection{Basic terminology in discrete equations} 
		To state our major results, we first introduce DNLW.
			\begin{equation}\label{DNLW}
			\left\{
			\begin{aligned}
				& \partial_{t}^{2} u(x,t)-\Delta u(x,t) =\mu |u|^{p-1}u,  \\
				& u(x,0) = f(x), \partial_{t}u(x,0)=g(x),\quad (x,t)\in \mathbb{Z}^d\times \mathbb{R},
			\end{aligned}
			\right.
		\end{equation}
		where $u:\mathbb{Z}^d\times \mathbb{R}\to \mathbb{C}$, $f,g:\mathbb{Z}^{d}\to \mathbb{C}$, $p>1$, $\mu=\pm1$. The case $\mu=1$ is called the focusing case and $\mu=-1$ is called the defocusing case, and the Laplacian is referred to the following operator.
		
		Suppose $G=(V,E)$ is a locally finite simple graph, $V$ is vertex set, $E$ is edge set. For $x,y\in V$, we denote $x\sim y$, if $x$,$y$ are adjacent, i.e. $(x,y)\in E$.  Then the Laplacian $\Delta$ on graph is defined as 
		\begin{equation*}
			\Delta f(x):=\sum_{y\in V, y\sim x}\Big(f(y)-f(x)\Big), \quad x\in V,
		\end{equation*}
		where $f:V\to \mathbb{C}$.
		
		Next, we briefly recall the $L^{p}$ space on $G=(V,E)$, which we usually denote as $\ell^{p}(G)$. For $f:V\to \mathbb{C}$, we define the $\ell^{p}(G)$-norm as
		\begin{equation*}
			\|f\|_{\ell^{p}(G)}:=\left(\sum_{v\in G}|f(v)|^{p}\right)^{\frac{1}{p}},
		\end{equation*}
		with the corresponding $\ell^{p}(G)$-space:
		\begin{equation*}
			\ell^{p}(G):=\left\lbrace f:G\to\mathbb{C}\big|\|f\|_{\ell^{p}(G)}<\infty\right\rbrace.
		\end{equation*}
		\begin{rem}
			In the following discussion, we only consider the case of lattices, i.e. $G=\mathbb{Z}^{d}$. Then a trivial observation is $\ell^{p}\subseteq\ell^{q}$, $0<p<q\le\infty$. This simple property will, in some cases, simplify our proofs. 
		\end{rem}
		
		\vspace{10pt}
		\subsection{Abstraction of the nonlinear dispersive equation}
		Before formally stating our major results, we shall introduce some concepts related to the  scattering theory, which can be referred to \cite{4}. We first consider the following nonlinear dispersive equation on a
		Hilbert space $X$.
		\begin{equation}\label{abstract}
			\frac{du}{dt}=iH_{0}u+Pu,
		\end{equation}
		where $P$ is a densely defined nonlinear operator, $H_{0}$ is a densely defined linear operator, and $U_{0}(t):=\exp(itH_{0})$ is the corresponding unitary operator on $X$. 
		\begin{defi}
			We call $u$ a strong solution to the equation (\ref{abstract}), if $u\in C(I;X)$ and satisfies 
			\begin{equation}\label{initial}
				u(t)=U_{0}(t)u(0)+\int_{0}^{t}U_{0}(t-s)Pu(s)ds, \quad \forall t\in I.
			\end{equation}
		\end{defi}
		The classic scattering theory includes the existence of the wave operator and the asymptotic completeness, which are defined as follows.
		\begin{defi}
			If $f_{\pm}\in X \;$, and there exists a strong solution $u$ of the equation (\ref{abstract}), such that
			\begin{equation}\label{wave operator}
				\|u(t)-u_{\pm}(t)\|_{X}\to0, \quad t\to \pm\infty,
			\end{equation}
			where $u_{\pm}(t)=U_{0}(t)f_{\pm}$. Then we define the wave operator $W_{\pm}(f_{\pm})=u(0)$.
		\end{defi}
		\begin{rem}
			We also define the scattering operator as $S:f_{-}\mapsto f_{+}$.
		\end{rem}
		\begin{defi}
			A solution $u$ is said to be asymptotic complete if it is a strong solution of the equation (\ref{abstract}) and there exist $f_{\pm}\in X$ satisfying (\ref{wave operator}).
		\end{defi}
		For convenience, some frequently encountered integral equations are denoted as follows. 
		\begin{itemize}
			\item ($\ast_{s},f$):\begin{equation*}
				u(t)=U_{0}(t)f+\int_{s}^{t}U_{0}(t-s)Pu(s)ds 
			\end{equation*}
			\item ($\ast_{\pm\infty},f_{\pm}$):\begin{equation*}
				u(t)=U_{0}(t)f_{\pm}+\int_{\pm\infty}^{t}U_{0}(t-s)Pu(s)ds 
			\end{equation*}
		\end{itemize}
		\begin{rem}
			Notice that the solution of the integral equation ($\ast_{s},f$), is the strong solution of the equation (\ref{abstract}), with the initial data $U_{0}(s)f$. And, informally, the integral equation ($\ast_{\pm\infty},f_{\pm}$) is equivalent to (\ref{wave operator}). Specifically, as $U_{0}(t)$ is a unitary operator on $X$, (\ref{wave operator}) is equivalent to 
			\begin{equation*}
				\|U_{0}(-t)u(t)-f_{\pm}\|_{X}\to 0, t\to \pm\infty.
			\end{equation*}
			From ($\ast_{\pm\infty},f_{\pm}$), it is equivalent to
			\begin{equation*}
				\left\|\int_{\pm\infty}^{t}U_{0}(-s)Pu(s)ds\right\|_{X}\to 0, t\to \pm\infty.
			\end{equation*}
			Thus, the scattering requires some integrability conditions on the solution $u$.
		\end{rem}
		\begin{rem}\label{transform}
			To fit DNLW (\ref{DNLW}) into the framework of (\ref{abstract}), we can use the following notations:
			\begin{equation*}
				\widetilde{u}(t):=
				\begin{bmatrix}
					u(t)  \\
					
					\partial_{t}u(t) \\
				\end{bmatrix},\quad 
				P: \begin{bmatrix}
					u_{1} \\
					
					u_{2} \\
				\end{bmatrix}\mapsto
				\begin{bmatrix}
					0 \\
					
					\mu|u_{1}|^{p-1}u_{1} \\
				\end{bmatrix},\quad H_{0}:=
				\begin{bmatrix}
					0 & -i\\
					
					-i\Delta& 0\\
				\end{bmatrix}.
			\end{equation*}
			Then we see that DNLW (\ref{DNLW}) is equivalent to 
			\begin{equation*}
				\frac{d\widetilde{u}}{dt}=iH_{0}\widetilde{u}+P\widetilde{u}.
			\end{equation*}
		And if there is no further specification, we use $\widetilde{u}$ to denote $(u,\partial_{t}u)$.
		\end{rem}
		\vspace{10pt}
		\subsection{Major theorems and results}
		To simplify the statements of our major results, we have the following notations. 
	
	    We denote $\beta_{2}:=\left(\frac{3}{4}\right)^{-}, \beta_{3}:=\frac{7}{6},  \beta_{4}:=\left(\frac{3}{2}\right)^{-}, \beta_{5}:=\frac{11}{6}$, where $a^{-}$ represents any number less than $a$. And $p_{d}$ is the bigger root of the equation $\beta_{d}=\frac{p+1}{p(p-3)}$, $d=2,3,4,5$.
		
		In order to correspond with the framework in \cite{4}, we take $X_{3}:=\ell^{p+1}(\mathbb{Z}^{d})\times\ell^{p+1}(\mathbb{Z}^{d})$, $Y:=\ell^{1+\frac{1}{p}}(\mathbb{Z}^{d})\times\ell^{1+\frac{1}{p}}(\mathbb{Z}^{d})$, $X:=\dot{H}^{1}(\mathbb{Z}^{d})\times \ell^{2}(\mathbb{Z}^{d})$ (the definition of $\dot{H}^{1}(\mathbb{Z}^{d})$ can be seen in Section 4).
		\begin{thm}\label{main}
			For $2\le d\le 5$, $p>p_{d}$, there exists $\delta>0$, such that if $F_{-}\in X\cap Y$, $\|F_{-}\|_{X}+\|F_{-}\|_{Y}<\delta$, then we have a unique $\widetilde{u}\in C(\mathbb{R};X)$ satisfying ($\ast_{-\infty},F_{-}$) and 
			\begin{equation}\label{rr}
				\sup_{t\in\mathbb{R}}(1+|t|)^{\beta_{d}(1-\frac{4}{p+1})}\|\widetilde{u}(t)\|_{X_{3}}<\infty.
			\end{equation}
			Most importantly, we have
			\begin{equation}\label{qq}
				\|\widetilde{u}(t)-U_{0}(t)F_{-}\|_{X}\to 0, \quad t\to-\infty.
			\end{equation}
			Furthermore, there exists a unique $F_{+}\in X$, such that $\widetilde{u}$ satisfies ($\ast_{+\infty},F_{+}$) and
			\begin{equation*}
				\|\widetilde{u}(t)-U_{0}(t)F_{+}\|_{X}\to 0, \quad t\to+\infty,
			\end{equation*}
			with the following conservation of energy. 
			\begin{equation}\label{conservation}
				\frac{1}{2}\|\widetilde{u}(t)\|_{X}^{2}-\frac{\mu}{p+1}\|u(t)\|_{\ell^{p+1}(\mathbb{Z}^{d})}^{p+1}=\frac{1}{2}\|F_{-}\|_{X}^{2}=\frac{1}{2}\|F_{+}\|_{X}^{2}, \quad \forall t\in \mathbb{R}.
			\end{equation}
		\end{thm}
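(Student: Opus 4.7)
The approach is to realize $\widetilde u$ as the unique fixed point of the Duhamel map
\begin{equation*}
\Phi(\widetilde u)(t):=U_0(t)F_-+\int_{-\infty}^{t}U_0(t-s)P\widetilde u(s)\,ds
\end{equation*}
on a small ball of the weighted Banach space
\begin{equation*}
\mathcal{E}_\delta:=\Bigl\{\widetilde u\in C(\R;X):\sup_{t\in\R}\|\widetilde u(t)\|_X+\sup_{t\in\R}(1+|t|)^{\gamma}\|\widetilde u(t)\|_{X_3}\le\delta\Bigr\},
\end{equation*}
where $\gamma:=\beta_d\bigl(1-\tfrac{4}{p+1}\bigr)=\tfrac{\beta_d(p-3)}{p+1}$. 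The defining equation $\beta_d=\tfrac{p+1}{p(p-3)}$ of $p_d$ is exactly $p\gamma=1$, so the hypothesis $p>p_d$ is equivalent to $p\gamma>1$, which is precisely what will close the time integral below.

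\textbf{Main estimates and fixed point.} Two inputs drive the argument. The first is the dispersive decay
\begin{equation*}
\|U_0(t)F\|_{X_3}\le C(1+|t|)^{-\gamma}\bigl(\|F\|_X+\|F\|_Y\bigr),
\end{equation*}
obtained by interpolating the $X$-unitarity of $U_0(t)$ against the sharp $Y\to X_3$ decay of rate $\beta_d$ proved in \cite{1,2,3}; this is the step that confines us to $2\le d\le 5$. The second is the H\"older bound
\begin{equation*}
\|P\widetilde u\|_Y\le \|u\|_{\ell^{p+1}(\Z^d)}^{p}\le \|\widetilde u\|_{X_3}^p,\qquad \|P\widetilde u\|_X\le \|u\|_{\ell^{2p}(\Z^d)}^p\le \|\widetilde u\|_{X_3}^p,
\end{equation*}
together with the obvious Lipschitz refinement; the second inequality uses the lattice embedding $\ell^{p+1}(\Z^d)\subseteq\ell^{2p}(\Z^d)$ (valid because $2p\ge p+1$). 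Inserting these into $\Phi$ and invoking the convolution estimate
\begin{equation*}
\int_{-\infty}^{t}(1+|t-s|)^{-\gamma}(1+|s|)^{-p\gamma}\,ds\le C(1+|t|)^{-\gamma},
\end{equation*}
which holds as soon as $p\gamma>1$, yields $\|\Phi\widetilde u\|_{\mathcal{E}}\le C(\|F_-\|_X+\|F_-\|_Y)+C\|\widetilde u\|_{\mathcal{E}}^p$; the $X$-component is controlled by combining the unitarity of $U_0$ with the absolutely convergent integral $\int\|P\widetilde u(s)\|_X\,ds\lesssim\delta^p$. Choosing $\delta$ small produces a contraction on $\mathcal{E}_\delta$, giving the unique $\widetilde u$ satisfying $(\ast_{-\infty},F_-)$ and \eqref{rr}, while \eqref{qq} is immediate since $\int_{-\infty}^{t}\|P\widetilde u(s)\|_X\,ds\to 0$ as $t\to-\infty$.

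\textbf{Forward scattering and energy conservation.} Because $\widetilde u\in C(\R;X)$ is defined on all of $\R$ with the decay \eqref{rr}, the Cauchy criterion
\begin{equation*}
\|U_0(-t)\widetilde u(t)-U_0(-t')\widetilde u(t')\|_X\le \int_{t'}^{t}\|P\widetilde u(s)\|_X\,ds\to 0 \quad \text{as } t,t'\to+\infty
\end{equation*}
produces an $X$-limit $F_+$ for which $(\ast_{+\infty},F_+)$ and the forward scattering hold automatically, and uniqueness is inherited from the integral equation. For \eqref{conservation} I would verify the pointwise identity $\tfrac{d}{dt}\bigl(\tfrac12\|\widetilde u(t)\|_X^2-\tfrac{\mu}{p+1}\|u(t)\|_{\ell^{p+1}(\Z^d)}^{p+1}\bigr)=0$, which on $\Z^d$ reduces to a legitimate manipulation of absolutely convergent sums (no boundary terms appear), and then pass $t\to\pm\infty$ using $\|u(t)\|_{\ell^{p+1}(\Z^d)}\to 0$ from \eqref{rr} together with $\|U_0(t)F_\pm\|_X=\|F_\pm\|_X$.

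\textbf{Main obstacle.} The crux of the argument is the simultaneous tuning of the three norms on $X$, $X_3$, $Y$ so that the self-improvement $(1+|s|)^{-p\gamma}$ of the nonlinear source exactly matches the linear decay $(1+|t-s|)^{-\gamma}$ of the propagator. This forces one to use the dispersive estimates of \cite{1,2,3} in their sharp interpolated form rather than merely as $Y\to X_3$ bounds, and the threshold $p>p_d$ is the tight compatibility condition: any deterioration of the rate $\beta_d$—which is exactly the dimension-dependent content of the oscillatory-integral analysis of \cite{1,2,3}—would propagate into an increase of $p_d$ and rule out low-power nonlinearities.
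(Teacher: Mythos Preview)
Your proposal is correct and takes essentially the same approach as the paper: the paper invokes the abstract ``framework-V'' of \cite{4} (Theorem~\ref{theorem1}), which is itself a contraction argument on the time-weighted space $V=\{u:\sup_t(1+|t|)^{\alpha}|u(t)|_{3}<\infty\}$ with $\alpha=\gamma$, and then verifies the hypotheses (I)--(III), (V)--(VII) for the spaces $X,X_1,X_3,X_4$ and the functional $G$; you simply carry out that same fixed-point argument directly on your $\mathcal{E}_\delta$, merging the $V$-norm with the $C(\R;X)$-norm. The key ingredients you identify---the dispersive bound $\|U_0(t)F\|_{X_3}\lesssim(1+|t|)^{-\gamma}\|F\|_{Y}$ (the paper's Theorem~\ref{jj} and Remark~\ref{d}), the H\"older estimate $\|P\widetilde u\|_{X_1}\lesssim\|\widetilde u\|_{X_3}^{p}$ (the paper's verification of (II)), the convolution inequality requiring $p\gamma>1\Leftrightarrow p>p_d$, and the energy identity (the paper's (VII))---are exactly the inputs the paper feeds into the abstract theorem.
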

		\begin{thm}\label{main2}
			For $2\le d\le 5$, $p>p_{d}$, if $F_{-}\in X\cap Y$, then there exsit $T>-\infty$ and a unique $\widetilde{u}\in C((-\infty,T];X\cap X_{3})$, satisfying ($\ast_{-\infty},F_{-}$) and
			\begin{equation*}
				\sup_{-\infty<t\le T}(1+|t|)^{\beta_{d}(1-\frac{4}{p+1})}\|\widetilde{u}(t)\|_{X_{3}}<\infty.
			\end{equation*}
			Similarly, we also have (\ref{qq}) and the following energy conservation. 
			\begin{equation*}
				\frac{1}{2}\|\widetilde{u}(t)\|_{X}^{2}-\frac{\mu}{p+1}\|u(t)\|_{\ell^{p+1}(\mathbb{Z}^{d})}^{p+1}=\frac{1}{2}\|F_{-}\|_{X}^{2}, \quad \forall t\in (-\infty,T].
			\end{equation*}
		\end{thm}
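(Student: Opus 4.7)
The plan is to reduce Theorem \ref{main2} to the contraction argument used for Theorem \ref{main} by exploiting the dispersive decay of $U_0(t)$ in the deep past as a substitute for the smallness of $F_-$. Although $\|F_-\|_X+\|F_-\|_Y$ is now arbitrary, the quantity $\|U_0(t)F_-\|_{X_3}$ still decays at the rate dictated by $\beta_d$, so the only norm that actually has to be small to close a nonlinear iteration -- the decay-weighted $X_3$ norm -- becomes small automatically on $(-\infty,T]$ provided $T$ is pushed far enough into the past.

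Concretely, I would set up the fixed-point problem in the Banach space
\[
E_T := \left\{\widetilde{u}\in C((-\infty,T];X\cap X_3):\|\widetilde{u}\|_{E_T}<\infty\right\},
\]
with
\[
\|\widetilde{u}\|_{E_T}:=\sup_{t\le T}\|\widetilde{u}(t)\|_{X}+\sup_{t\le T}(1+|t|)^{\beta_{d}(1-\frac{4}{p+1})}\|\widetilde{u}(t)\|_{X_{3}},
\]
and look for a fixed point of $\Phi(\widetilde{u})(t):=U_0(t)F_-+\int_{-\infty}^{t}U_0(t-s)P\widetilde{u}(s)\,ds$. The uniform dispersive estimate of \cite{1,2,3} gives $\|U_0(t)g\|_{\ell^{p+1}}\lesssim (1+|t|)^{-\beta_{d}(1-\frac{2}{p+1})}\|g\|_{\ell^{1+\frac{1}{p}}}$, hence
\[
(1+|t|)^{\beta_{d}(1-\frac{4}{p+1})}\|U_0(t)F_-\|_{X_{3}}\lesssim (1+|t|)^{-\frac{2\beta_{d}}{p+1}}\|F_-\|_{Y},
\]
which tends to $0$ as $t\to-\infty$. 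Consequently the $X_3$-part of $\|U_0(\cdot)F_-\|_{E_T}$ can be made arbitrarily small by taking $|T|$ large, while the $X$-part is simply bounded by $\|F_-\|_X$.

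The nonlinear estimates are identical to those in Theorem \ref{main}: for the weighted $X_3$-part, the dispersive decay combined with the H\"older bound $\|P\widetilde{u}\|_{Y}\lesssim\|\widetilde{u}\|_{X_{3}}^{p}$ turns the Duhamel integral into a one-dimensional convolution whose convergence is guaranteed exactly by the hypothesis $p>p_d$, i.e. $\beta_d>(p+1)/(p(p-3))$; for the $X$-part one uses unitarity of $U_0(t)$ on $X$ together with a H\"older bound controlling the relevant norm of $P\widetilde{u}(s)$ by a product in which $\|\widetilde{u}(s)\|_{X_3}^{p}$ is the small factor, and then integrates against the same weight. Selecting first a ball of radius $R\sim\|F_-\|_X+1$ in $E_T$ and then pushing $T$ deep into the past so that the $X_3$-driver and all nonlinear prefactors drop below the thresholds required for self-mapping and contraction, one obtains a unique fixed point $\widetilde{u}\in E_T$, which by construction satisfies $(\ast_{-\infty},F_-)$ and the weighted bound; (\ref{qq}) follows from the Duhamel representation combined with the integrability of $\|P\widetilde{u}(s)\|_{X}$ on $(-\infty,T]$. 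Energy conservation is obtained by first differentiating $\tfrac12\|\widetilde{u}\|_X^2-\tfrac{\mu}{p+1}\|u\|_{\ell^{p+1}}^{p+1}$ in $t$ on $(-\infty,T]$ (legitimate by the $C(\cdot;X\cap X_3)$-regularity of $\widetilde{u}$ and the equation), and then identifying the constant via (\ref{qq}) together with $\|u(t)\|_{\ell^{p+1}}\to 0$.

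The main obstacle is the bookkeeping forced by the absence of any smallness on $\|F_-\|_X$: the contraction estimates must be arranged so that the $X$-norm is never "absorbed" into the right-hand side, and so that every smallness factor is bought purely from taking $|T|$ large. Getting this order right -- first fix the radius $R$ absorbing $\|F_-\|_X$, then choose $T\ll 0$ so that all $X_3$-weighted and nonlinear prefactors fall below the required thresholds -- is the one genuinely new ingredient relative to Theorem \ref{main}, and is where I expect the delicate part of the proof to lie.
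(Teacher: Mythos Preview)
Your overall strategy --- a contraction on $(-\infty,T]$ with $T$ pushed deep into the past --- is exactly the content of Theorem~\ref{theorem2} from the abstract framework-V in \cite{4}, which is how the paper proves Theorem~\ref{main2}: it verifies assumptions (I)--(III), (V)--(VII) and then invokes that black box. So your plan is correct in spirit and amounts to unpacking that black box by hand.

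However, there is a genuine error in your mechanism for smallness. The dispersive rate you quote,
\[
\|U_0(t)F_-\|_{X_3}\lesssim (1+|t|)^{-\beta_d(1-\frac{2}{p+1})}\|F_-\|_{Y},
\]
is wrong for the wave propagator on the lattice. The slow entry of $U_0(t)$ is $\sin(t\sqrt{-\Delta})/\sqrt{-\Delta}$, whose Green's function $G$ is only bounded in $\ell^2$ (Lemma~\ref{lemma}), not in $\ell^1$; interpolating $\|G\|_{\ell^\infty}\lesssim\langle t\rangle^{-\beta_d}$ with $\|G\|_{\ell^2}\lesssim 1$ and applying Young with $k=(p+1)/2$ gives only the rate $\beta_d(1-\tfrac{4}{p+1})=\alpha$ (this is Theorem~\ref{jj} and Remark~\ref{d}). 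Consequently
\[
\sup_{t\le T}(1+|t|)^{\alpha}\|U_0(t)F_-\|_{X_3}\lesssim \|F_-\|_{Y}
\]
is merely \emph{bounded}, not small, and does \emph{not} tend to $0$ as $T\to-\infty$. Your sentence ``the $X_3$-part of $\|U_0(\cdot)F_-\|_{E_T}$ can be made arbitrarily small'' is therefore false.

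The fix is that the smallness comes from the \emph{nonlinear} Duhamel term, not the linear one. On a ball of radius $R$ in the weighted $X_3$-norm, the convolution estimate (using $\|P\widetilde u\|_{X_1}\lesssim\|\widetilde u\|_{X_3}^p$ and $\alpha<1$, $p\alpha>1$) gives
\[
(1+|t|)^{\alpha}\Big\|\int_{-\infty}^{t}U_0(t-s)P\widetilde u(s)\,ds\Big\|_{X_3}
\lesssim R^{p}\,(1+|t|)^{\alpha}\!\int_{-\infty}^{t}(1+|t-s|)^{-\alpha}(1+|s|)^{-p\alpha}ds
\lesssim R^{p}(1+|T|)^{1-p\alpha},
\]
and it is this factor $(1+|T|)^{1-p\alpha}\to 0$ (guaranteed precisely by $p>p_d$, i.e.\ $p\alpha>1$) that buys self-mapping and contraction for $|T|$ large. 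So the correct order is: fix $R\sim C\|F_-\|_Y$ to absorb the (non-small) linear contribution, then choose $|T|$ large so that $R^{p}(1+|T|)^{1-p\alpha}\ll R$ and $R^{p-1}(1+|T|)^{1-p\alpha}\ll 1$. The $X$-part and the rest of your argument (energy, (\ref{qq})) then go through as you described.
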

		\begin{thm}\label{main3}
			For $2\le d\le 5$, $p>p_{d}$, there exists $\delta>0$, such that if $F_{0}\in X\cap Y$, and $\|F_{0}\|_{X}+\|F_{0}\|_{Y}<\delta$, then we have a unique strong solution $\widetilde{u}\in C(\mathbb{R};X)$ of the equation (\ref{DNLW}), satisfying (\ref{rr}). Moreover, there exists $F_{\pm}\in X$, satisfying all assertions of Theorem \ref{main}.
		\end{thm}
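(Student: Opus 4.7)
The plan is to carry out, essentially verbatim, the contraction mapping argument used for Theorem \ref{main}, but with the integral equation $(\ast_0, F_0)$ in place of $(\ast_{-\infty}, F_-)$. First I would define the Banach space
\begin{equation*}
Z := \left\{ \widetilde{u} \in C(\mathbb{R}; X) : \|\widetilde{u}\|_Z := \sup_{t \in \mathbb{R}} \|\widetilde{u}(t)\|_X + \sup_{t \in \mathbb{R}} (1+|t|)^{\beta_d(1-\frac{4}{p+1})} \|\widetilde{u}(t)\|_{X_3} < \infty \right\}
\end{equation*}
and consider the map
\begin{equation*}
\Phi(\widetilde{u})(t) := U_0(t) F_0 + \int_0^t U_0(t-s) P\widetilde{u}(s)\, ds.
\end{equation*}

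Next I would run the key estimates. The linear term $U_0(t)F_0$ is controlled in $X$ by unitarity of $U_0$ and in $X_3$ by the lattice dispersive estimates of \cite{1,2,3} applied to the datum $F_0 \in Y$, giving $\|U_0(\cdot)F_0\|_Z \lesssim \|F_0\|_X + \|F_0\|_Y$. For the Duhamel term I would use the same dispersive bound on $U_0(t-s)$ together with the pointwise-in-time H\"older-type inequality $\|P\widetilde{u}(s)\|_Y \lesssim \|\widetilde{u}(s)\|_{X_3}^p$; the resulting weight $(1+|s|)^{-p\beta_d(1-\frac{4}{p+1})}$ is time-integrable precisely when $p>p_d$, which is the defining property of $p_d$. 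This closes both a ball-into-ball estimate and a Lipschitz-in-ball estimate for $\Phi$, so for $\delta$ small enough $\Phi$ is a contraction on a small ball of $Z$. The fixed point $\widetilde{u}$ is the desired strong solution of (\ref{DNLW}) with initial data $F_0$ at $t=0$ satisfying (\ref{rr}).

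Then I would derive the scattering conclusions. The decay (\ref{rr}) combined with the dispersive and H\"older estimates makes $\int_0^{\pm\infty} U_0(-s) P\widetilde{u}(s)\, ds$ convergent in $X$; setting $F_\pm := F_0 + \int_0^{\pm\infty} U_0(-s) P\widetilde{u}(s)\, ds$ gives elements of $X$ for which $\widetilde{u}$ automatically satisfies $(\ast_{\pm\infty}, F_\pm)$ and $\|\widetilde{u}(t)-U_0(t)F_\pm\|_X \to 0$ as $t \to \pm\infty$. All remaining conclusions of Theorem \ref{main} then follow: the energy identity (\ref{conservation}) is obtained by pairing (\ref{DNLW}) with $\partial_t \overline{u}$ in $\ell^2$, summing over $\mathbb{Z}^d$, and integrating in time, and the identity $\|F_\pm\|_X = \|F_0\|_X$ comes from (\ref{qq}) and the unitarity of $U_0$ on $X$.

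The hard part, exactly as in Theorem \ref{main}, is closing the Duhamel estimate in the middle step: it hinges on the sharp lattice dispersive decay rates $\beta_d$, which (by \cite{1,2,3}) are only available for $d \le 5$, and on the algebraic identity $\beta_d = \frac{p+1}{p(p-3)}$ at $p=p_d$, which is exactly what guarantees time-integrability of the Duhamel weight and hence global-in-time control. Once this estimate is in place, the propagation from $t=0$ to $t=\pm\infty$ and the energy identities are routine.
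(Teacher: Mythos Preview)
Your approach is correct and the underlying estimates are the same ones the paper uses. The paper, however, organizes the proof differently: instead of running the contraction directly, it reduces Theorem \ref{main3} to the black-box Theorem \ref{theorem3.6} of the abstract framework-V from \cite{4}, and the actual work in Section 4 is verifying the hypotheses (I)--(III), (V)--(VII) of that framework for the specific choices $X=\dot{H}^1\times\ell^2$, $X_3=\ell^{p+1}\times\ell^{p+1}$, $X_1=\{0\}\times\ell^{1+1/p}$, $\alpha=\beta_d(1-\tfrac{4}{p+1})$. In particular, the contraction in the weighted $V$-space and the construction of $F_\pm$ are hidden inside \cite{4}, while hypothesis (VII) (that a $V$-small solution of $(\ast_s,f)$ lies in $C(I;X)$ and conserves energy) is handled by a separate local-well-posedness-plus-continuation argument in $X$ using the lattice embedding $\ell^2\hookrightarrow\ell^{2p}$. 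Your version folds the $X$-norm control directly into the contraction space $Z$, which is more self-contained and collapses the two-step structure into one fixed-point argument; the paper's modular route, on the other hand, lets a single verification of (I)--(VII) simultaneously yield Theorems \ref{main}, \ref{main2}, and \ref{main3} by invoking three different abstract theorems from \cite{4}.
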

		The above results are all based on the framework in \cite{4}. Although this framework can be applied to other cases, it may not be a refined tool to study DNLW. For example, this framework doesn't distinguish the defocusing case and the focusing case, whose behaviors are extremely different. In the following content, we will use the Strichartz estimate to reconstruct the scattering theory of DNLW.
		
		To use the Strichartz estimate, we first introduce the admissible pair of the Strichartz estimate and the Strichartz norm.
		\begin{defi}
			($q,r$) is called $\sigma$-admissible, if $q,r\ge 2$, and 
			\begin{equation*}
				(q,r,\sigma)\ne (2,\infty,1), \quad \frac{1}{q}+\frac{\sigma}{r}\le \frac{\sigma}{2}.
			\end{equation*}
		\end{defi}
		\begin{defi}\label{SS}
			For the time interval $I\subseteq\mathbb{R}$, we define the Strichartz space $S^{0}(I\times\mathbb{Z}^{d})$, $S^{1}(I\times \mathbb{Z}^{d})$ as the closures of the Schwarz space under the following norms.
			\begin{equation*}
				\|u\|_{S^{0}(I\times\mathbb{Z}^{d})}:=\sup_{(q,r)\; \text{is}\; \beta_{d}-\text{admissible}}\|u\|_{L_{t}^{q}\ell^{r}_{x}(I\times\mathbb{Z}^{d})}.
			\end{equation*}
			\begin{equation*}
				\|u\|_{S^{1}(I\times\mathbb{Z}^{d})}:=\sup_{(q,r)\; \text{is}\; \beta_{d}-\text{admissible}}\|u\|_{L_{t}^{q}\dot{W}_{x}^{r}(I\times\mathbb{Z}^{d})}.
			\end{equation*}
			For $\widetilde{u}=(u_{1},u_{2})$, we similarly define $S(I\times \mathbb{Z}^{d})$ as the closure of the Schwarz space under the following norm.
			\begin{equation*}
				\|\widetilde{u}\|_{S(I\times \mathbb{Z}^{d})}:=\|u_{1}\|_{S^{1}(I\times\mathbb{Z}^{d})}+\|u_{2}\|_{S^{0}(I\times\mathbb{Z}^{d})}.
			\end{equation*}
			To fully use the Strichartz estimate, we also define $N^{0}(I\times\mathbb{Z}^{d})$, $N^{1}(I\times\mathbb{Z}^{d})$,and $N(I\times\mathbb{Z}^{d})$, as the dual space of $S^{0}(I\times\mathbb{Z}^{d})$, $S^{1}(I\times\mathbb{Z}^{d})$, and $S(I\times\mathbb{Z}^{d})$, respectively.
		\end{defi}
		\begin{rem}
			For $F=(F_{1},F_{2})$, we abbreviate $\|\cdot\|_{A\times A}$ as $\|\cdot\|_{A}$. For example, 
			\begin{equation*}
				\|F\|_{L_{t}^{q}\ell^{r}_{x}(I\times\mathbb{Z}^{d})}:=\|F_{1}\|_{L_{t}^{q}\ell^{r}_{x}(I\times\mathbb{Z}^{d})}+\|F_{2}\|_{L_{t}^{q}\ell^{r}_{x}(I\times\mathbb{Z}^{d})},
			\end{equation*}
			\begin{equation*}
				\|F\|_{S^{i}(I\times \mathbb{Z}^{d})}:=\|F_{1}\|_{S^{i}(I\times \mathbb{Z}^{d})}+\|F_{2}\|_{S^{i}(I\times \mathbb{Z}^{d})}, \quad i=0,1.
			\end{equation*}
		\end{rem}
		
		Next, we can use the Strichartz estimate to improve Theorem \ref{main} from different perspectives. For instance, in the defocusing case, Theorem \ref{S1} and Theorem \ref{S1.5} remove the smallness condition on the asymptotic state $F_{-}$.
		
		\begin{thm}\label{S1}
			For $2\le d\le 5$, $p\ge 1+\frac{2}{\beta_{d}}+\frac{2}{d}$, $\mu=-1$, if $F_{-}=(f,g)\in \ell^{2}(\mathbb{Z}^{d})\times\ell^{\frac{2d}{d+2}}(\mathbb{Z}^{d})$, then there exists a unique $\widetilde{u}\in C(\mathbb{R};X)$ satisfying $(\ast_{-\infty},F_{-})$ and
			\begin{equation*}
				\|\widetilde{u}(t)-U_{0}(t)F_{-}\|_{X}\to 0, \quad t\to-\infty.
			\end{equation*}
			Similarly, for $F_{+}\in \ell^{2}(\mathbb{Z}^{d})\times\ell^{\frac{2d}{d+2}}(\mathbb{Z}^{d})$, there exists $\widetilde{u}$, not necessarily the same `` $\widetilde{u}$" for $F_{-}$, satisfying $(\ast_{+\infty},F_{+})$, with 
			\begin{equation*}
				\|\widetilde{u}(t)-U_{0}(t)F_{+}\|_{X}\to 0, \quad t\to+\infty,
			\end{equation*}
			and the energy conservation $(\ref{conservation})$. Furthermore, there also exists $\delta>0$, such that if $\|f\|_{\ell^{2}(\mathbb{Z}^{d})}+\|g\|_{\ell^{\frac{2d}{d+2}}(\mathbb{Z}^{d})}\le \delta$, then $\widetilde{u}\in S^{0}(\mathbb{R}\times\mathbb{Z}^{d})\times S^{0}(\mathbb{R}\times\mathbb{Z}^{d})$.
		\end{thm}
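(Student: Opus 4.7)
The plan is to use the Strichartz machinery as a refined replacement for the abstract framework of \cite{4}, solving the integral equation $(\ast_{-\infty},F_-)$ via a fixed-point argument, and extending globally by the defocusing energy conservation when needed. The two essential inputs are the homogeneous and inhomogeneous Strichartz estimates
\begin{equation*}
\|U_0(t)F\|_{S^0(I\times\mathbb{Z}^d)}\lesssim\|F\|_{\ell^2\times\ell^{2d/(d+2)}},\qquad\left\|\int_s^tU_0(t-\tau)N(\tau)\,d\tau\right\|_{S^0(I\times\mathbb{Z}^d)}\lesssim\|N\|_{N^0(I\times\mathbb{Z}^d)},
\end{equation*}
which follow from the dispersive decay with rate $\beta_d$ recorded in \cite{1,2,3} together with a Keel--Tao duality argument.

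I would first dispatch the small-data statement. For $\|f\|_{\ell^2}+\|g\|_{\ell^{2d/(d+2)}}\le\delta$, the map
\begin{equation*}
\Phi(\widetilde{u})(t):=U_0(t)F_-+\int_{-\infty}^tU_0(t-\tau)P\widetilde{u}(\tau)\,d\tau
\end{equation*}
contracts on a small ball of $S^0(\mathbb{R}\times\mathbb{Z}^d)\times S^0(\mathbb{R}\times\mathbb{Z}^d)$. The exponent condition $p\ge 1+\tfrac{2}{\beta_d}+\tfrac{2}{d}$ is chosen precisely so that one can find $\beta_d$-admissible pairs $(q,r)$ and $(\tilde q,\tilde r)$ realizing the H\"older pairing
\begin{equation*}
\||u|^{p-1}u\|_{L^{\tilde q'}_t\ell^{\tilde r'}_x}\lesssim\|u\|_{L^q_t\ell^r_x}^p,
\end{equation*}
so that the standard Strichartz closure yields a global $S^0\times S^0$ solution. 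Since $P\widetilde{u}\in N^0(\mathbb{R}\times\mathbb{Z}^d)$, the Duhamel tail $\int_{\pm\infty}^tU_0(-\tau)P\widetilde{u}(\tau)\,d\tau$ converges in $X$, producing the scattering states $F_\pm\in X$ with energy conservation obtained by pairing the equation with $\partial_t u$.

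For general $F_-\in\ell^2\times\ell^{2d/(d+2)}$ without smallness, I would recover smallness on a tail: by density of Schwartz data and finiteness of $\|U_0(\cdot)F\|_{S^0(\mathbb{R})}$ for Schwartz $F$,
\begin{equation*}
\lim_{T\to-\infty}\|U_0(t)F_-\|_{S^0((-\infty,T]\times\mathbb{Z}^d)}=0.
\end{equation*}
Choosing $T$ negative enough that this tail falls below the small-data threshold, the contraction of the previous paragraph produces a unique $\widetilde{u}$ on $(-\infty,T]$ solving $(\ast_{-\infty},F_-)$ and $\|\widetilde{u}(t)-U_0(t)F_-\|_X\to 0$ as $t\to-\infty$. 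To extend $\widetilde{u}$ to all of $\mathbb{R}$, I would invoke local well-posedness in the energy space $X$ together with the defocusing energy identity
\begin{equation*}
\tfrac{1}{2}\|\widetilde{u}(t)\|_X^2+\tfrac{1}{p+1}\|u(t)\|_{\ell^{p+1}}^{p+1}=\tfrac{1}{2}\|F_-\|_X^2,
\end{equation*}
whose favorable sign $\mu=-1$ keeps $\|\widetilde{u}(t)\|_X$ uniformly bounded and rules out blow-up. The wave operator at $+\infty$ is built symmetrically on $[T,\infty)$ for $T\gg 0$.

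The main obstacle is the tail vanishing $\|U_0(t)F_-\|_{S^0((-\infty,T])}\to 0$ above. Because $S^0$ is a supremum over all $\beta_d$-admissible pairs, one cannot apply dominated convergence pair-by-pair; instead one must first restrict to non-endpoint pairs with $q<\infty$, approximate $F_-$ by Schwartz data for which the full-line Strichartz norm is finite, and then interpolate back to $S^0$. A secondary bookkeeping challenge is to verify, for each $d\in\{2,3,4,5\}$, that the H\"older pairing of Step~2 can be realized inside the admissibility region $\tfrac{1}{q}+\tfrac{\beta_d}{r}\le\tfrac{\beta_d}{2}$ precisely when $p\ge 1+\tfrac{2}{\beta_d}+\tfrac{2}{d}$; this is more delicate than in the Schr\"odinger case, since wave admissibility is an inequality rather than an equality, giving freedom that must be exploited carefully to exhibit a valid pair.
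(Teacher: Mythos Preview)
Your proposal is correct and follows essentially the same route as the paper: Strichartz-based contraction on a tail $(-\infty,-T]$ after making $\|U_0(\cdot)F_-\|$ small there, followed by global continuation via the defocusing energy identity. The two ``obstacles'' you flag are exactly where the paper's argument becomes concrete: rather than working with the full supremum norm $S^0$, the paper fixes an explicit finite collection of $\beta_d$-admissible pairs $(q_i,r_i)$, $i=1,\dots,5$ (given in closed form in terms of $\sigma_d:=2/(p-1-2/d)$), defines an ad hoc norm $S_0$ involving only the pairs needed for the H\"older closure, and runs the contraction there---so the tail-vanishing issue for $q=\infty$ never arises, and the ``bookkeeping challenge'' is resolved by exhibiting the pairs explicitly and checking that $p\ge 1+\tfrac{2}{\beta_d}+\tfrac{2}{d}$ is equivalent to $\beta_d\ge\sigma_d$.
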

		\begin{thm}\label{S1.5}
			For $3\le d\le 5$, $p\ge \frac{d(\beta_{d}+2)}{\beta_{d}(d-2)}$, $\mu=-1$, if $F_{-}\in X:= \dot{H}^{1}(\mathbb{Z}^{d})\times\ell^{2}(\mathbb{Z}^{d})$, then the assertions in Theorem \ref{S1} can also be satisfied. Moreover, there exists $\delta>0$, such that if $\|F_{-}\|_{X}\le  \delta$, then $\widetilde{u}\in S(\mathbb{R}\times\mathbb{Z}^{d})$.
		\end{thm}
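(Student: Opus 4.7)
The plan is to run the standard Strichartz-bootstrap scheme at the energy regularity $X = \dot{H}^{1}(\mathbb{Z}^{d})\times\ell^{2}(\mathbb{Z}^{d})$, exploiting the defocusing sign $\mu=-1$ for global existence. First I would record the homogeneous and inhomogeneous Strichartz estimates at the $X$ level,
\begin{equation*}
\|U_{0}(t)F\|_{S(I\times\mathbb{Z}^{d})} \lesssim \|F\|_{X}, \qquad \left\|\int_{t_{0}}^{t}U_{0}(t-s)G(s)\,ds\right\|_{S(I\times\mathbb{Z}^{d})} \lesssim \|G\|_{N(I\times\mathbb{Z}^{d})},
\end{equation*}
which follow from the lattice decay estimate $\|U_{0}(t)\|_{\ell^{1}\to\ell^{\infty}}\lesssim(1+|t|)^{-\beta_{d}}$ of \cite{1,2,3} via the standard $TT^{\ast}$ argument and the definition of the $\beta_{d}$-admissible pairs.

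The key nonlinear estimate to establish is
\begin{equation*}
\|P\widetilde{u}\|_{N(I\times\mathbb{Z}^{d})}=\||u|^{p-1}u\|_{N^{0}(I\times\mathbb{Z}^{d})} \lesssim \|\widetilde{u}\|_{S(I\times\mathbb{Z}^{d})}^{p}.
\end{equation*}
By duality I would select a specific $\beta_{d}$-admissible pair $(q,r)$, apply H\"older in both time and space, and convert the resulting $\|u\|_{L_{t}^{Q}\ell_{x}^{R}}$ norms into $S^{1}$ norms using the discrete Sobolev-type embedding $\dot{W}^{1,r_{1}}\hookrightarrow\ell^{r_{2}}$ together with the elementary lattice inclusion $\ell^{p}\subset\ell^{q}$ for $p\le q$. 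The hypothesis $p\ge\frac{d(\beta_{d}+2)}{\beta_{d}(d-2)}$ is exactly the algebraic condition that lets the chosen exponents lie in the $\beta_{d}$-admissible region.

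With the two preceding ingredients in hand, the Picard map
\begin{equation*}
\Phi(\widetilde{u})(t):=U_{0}(t)F_{-}+\int_{-\infty}^{t}U_{0}(t-s)P\widetilde{u}(s)\,ds
\end{equation*}
is a contraction on a small ball of $S(\mathbb{R}\times\mathbb{Z}^{d})$ whenever $\|F_{-}\|_{X}<\delta$, which yields the last sentence of the theorem and produces the wave operator for small data. For arbitrary $F_{-}\in X$, the global bound $\|U_{0}(\cdot)F_{-}\|_{S(\mathbb{R})}\lesssim\|F_{-}\|_{X}$ and dominated convergence let me pick $T_{0}=T_{0}(F_{-})$ so small that $\|U_{0}(\cdot)F_{-}\|_{S((-\infty,T_{0}])}<\delta$; the small-data contraction on $(-\infty,T_{0}]$ then gives a solution satisfying $(\ast_{-\infty},F_{-})$, and testing the equation against $\partial_{t}u$ yields the energy identity (\ref{conservation}) with both terms on the left non-negative (because $\mu=-1$), hence the a priori bound $\|\widetilde{u}(t)\|_{X}\le\|F_{-}\|_{X}$ that allows unrestricted continuation past $T_{0}$ by the standard local theory. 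The construction for $F_{+}$ is symmetric, time-reversed.

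The main obstacle is the nonlinear estimate of the second step: unlike in $\mathbb{R}^{d}$, the lattice embedding $\ell^{p}\subset\ell^{q}$ runs in the opposite direction and the wave admissibility exponent $\beta_{d}$ is strictly smaller than $(d-1)/2$, so the admissible region is narrower and one must be careful that all the chosen pairs actually satisfy both the admissibility condition and the H\"older scaling at the $\dot{H}^{1}$ level. Verifying that the threshold $p\ge\frac{d(\beta_{d}+2)}{\beta_{d}(d-2)}$ does permit such a choice is the essential computation; once this is in place, Steps 3 and 4 follow cleanly, and the defocusing sign removes any need for a Morawetz-type argument since only existence of $W_{\pm}$ (and not asymptotic completeness of the $W_{-}$-solution at $+\infty$) is asserted.
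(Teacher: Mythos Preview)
Your proposal is correct and follows essentially the same route as the paper: one picks $T$ large enough that $\|U_{0}(\cdot)F_{-}\|_{S((-\infty,-T])}$ is small, runs a contraction in the Strichartz space $S$ on $(-\infty,-T]$ using H\"older plus the discrete Sobolev embedding $\dot{W}^{1,r}\hookrightarrow\ell^{\frac{dr}{d-r}}$ to close the nonlinear estimate, and then extends to $\mathbb{R}$ via the defocusing energy bound and the continuation principle. The paper carries out the computation you flag as ``the essential computation'' by writing down explicit $\beta_{d}$-admissible pairs $(q_{i},r_{i})$, $i=1,\dots,5$, depending on $\sigma_{d}:=\bigl(p(\tfrac{1}{2}-\tfrac{1}{d})-\tfrac{1}{2}\bigr)^{-1}$, and observing that the threshold $p\ge\frac{d(\beta_{d}+2)}{\beta_{d}(d-2)}$ is equivalent to $\beta_{d}\ge\sigma_{d}$, which is precisely the inequality needed to make those pairs admissible.
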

		\begin{rem}
			Compared with Theorem \ref{main}, Theorem \ref{S1} broadens the range of the nonlinear power $p$, when $d\ge3$. In fact, simple calculation shows that $p_{d}>1+\frac{2}{\beta_{d}}+\frac{2}{d}$, when $d\ge3$. However, they all impose extra hypothesis on the asymptotic state $F_{-}$, other than $F_{-}\in X$. For example, Theorem \ref{main} requires $F_{-}\in Y=\ell^{1+\frac{1}{p}}(\mathbb{Z}^{d})\times\ell^{1+\frac{1}{p}}(\mathbb{Z}^{d})$; Theorem \ref{S1} requires $F_{-}\in \ell^{2}(\mathbb{Z}^{d})\times\ell^{\frac{2d}{d+2}}(\mathbb{Z}^{d})$. Theorem \ref{S1.5}, instead, trades the range of nonlinear power $p$ for removing the extra hypothesis on $F_{-}$.
		\end{rem}
		Next, we can also use the Strichartz estimate to reconstruct the asymptotic completeness.
		\begin{thm}\label{S2}
			For $2\le d\le 5$, $p\ge 1+\frac{2}{\beta_{d}}+\frac{2}{d}$, there exists $\delta>0$, such that if $F_{0}=(f,g)\in \ell^{2}(\mathbb{Z}^{d})\times\ell^{\frac{2d}{d+2}}(\mathbb{Z}^{d})$, $\|f\|_{\ell^{2}(\mathbb{Z}^{d})}+\|g\|_{\ell^{\frac{2d}{d+2}}(\mathbb{Z}^{d})}<\delta$, then we have a unique strong solution $\widetilde{u}\in C(\mathbb{R};X)$ of the equation (\ref{DNLW}), with the initial data $F_{0}$. Furthermore, there exists $F_{\pm}\in X$, satisfying all assertions in Theorem \ref{S1}.
		\end{thm}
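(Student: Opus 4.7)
The plan is to close a small-data fixed-point argument for the Duhamel equation $(\ast_{0},F_{0})$ in a Strichartz space, and then extract the scattering states $F_{\pm}$ from the tails of the Duhamel integral. This is the forward-Cauchy analogue of Theorem \ref{S1}, so the Strichartz machinery assembled there should port over with only a change of reference time from $\mp\infty$ to $0$.

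First, I would recast (\ref{DNLW}) as the first-order system of Remark \ref{transform} and look for fixed points of
\begin{equation*}
\Phi(\widetilde{u})(t) := U_{0}(t)F_{0}+\int_{0}^{t}U_{0}(t-s)P\widetilde{u}(s)\,ds
\end{equation*}
on a small closed ball in $S^{0}(\mathbb{R}\times\mathbb{Z}^{d})\times S^{0}(\mathbb{R}\times\mathbb{Z}^{d})$. Since $(\infty,2)$ satisfies the $\beta_{d}$-admissibility inequality $\tfrac{1}{q}+\tfrac{\beta_{d}}{r}\le\tfrac{\beta_{d}}{2}$, control of the $S^{0}$-norm will automatically yield $\widetilde{u}\in C(\mathbb{R};X)$, using the embedding $\ell^{2}\hookrightarrow\dot{H}^{1}(\mathbb{Z}^{d})$ that follows from boundedness of the lattice Laplacian. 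For the homogeneous part I would invoke the Strichartz estimate for the discrete wave propagator (based on the uniform decay estimates from \cite{1,2,3}) together with its dual endpoint to obtain
\begin{equation*}
\|U_{0}(\cdot)F_{0}\|_{S^{0}\times S^{0}}\le C\bigl(\|f\|_{\ell^{2}(\mathbb{Z}^{d})}+\|g\|_{\ell^{\frac{2d}{d+2}}(\mathbb{Z}^{d})}\bigr),
\end{equation*}
which matches exactly the hypothesis on $F_{0}$.

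Next, I would bound the nonlinear contribution via the inhomogeneous Strichartz inequality and a single Hölder step,
\begin{equation*}
\bigl\||u_{1}|^{p-1}u_{1}\bigr\|_{L_{t}^{q'}\ell_{x}^{r'}}=\|u_{1}\|_{L_{t}^{pq'}\ell_{x}^{pr'}}^{p}.
\end{equation*}
The purpose of the hypothesis $p\ge 1+\tfrac{2}{\beta_{d}}+\tfrac{2}{d}$ is precisely to guarantee the existence of a $\beta_{d}$-admissible pair $(q,r)$ for which $(pq',pr')$ is again $\beta_{d}$-admissible and therefore controlled by $\|u_{1}\|_{S^{0}}$; this reduces to an arithmetic check on the admissibility inequality. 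For the contraction one estimates $\Phi(\widetilde{u})-\Phi(\widetilde{v})$ using the pointwise bound $\bigl||a|^{p-1}a-|b|^{p-1}b\bigr|\le C(|a|^{p-1}+|b|^{p-1})|a-b|$ together with the same Hölder pair, and smallness of $F_{0}$ then closes the fixed point, producing a unique global $\widetilde{u}\in C(\mathbb{R};X)\cap(S^{0}\times S^{0})$.

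Finally, for the scattering step I would set
\begin{equation*}
F_{+}:=F_{0}+\int_{0}^{\infty}U_{0}(-s)P\widetilde{u}(s)\,ds,
\end{equation*}
and verify convergence in $X$ by applying the inhomogeneous Strichartz estimate on $[T,\infty)$: the $X$-norm of the tail is bounded by the $N^{0}$-norm of $P\widetilde{u}$ restricted to $[T,\infty)$, which tends to $0$ as $T\to\infty$ by dominated convergence once we know $\widetilde{u}$ lies globally in the Strichartz space. A symmetric computation produces $F_{-}$, and together with the $S^{0}\times S^{0}$ membership of $\widetilde{u}$ these give all the assertions inherited from Theorem \ref{S1}. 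I expect the main obstacle to be the arithmetic of step two: verifying that for every admissible $p$ there exists a single $\beta_{d}$-admissible pair $(q,r)$ for which $(pq',pr')$ is also $\beta_{d}$-admissible and simultaneously tolerates the Lipschitz form of the Hölder estimate, so that the contraction genuinely closes in the same norm in which continuity in $X$ can be read off. The threshold $p\ge 1+\tfrac{2}{\beta_{d}}+\tfrac{2}{d}$ is tight at precisely this step, and any slackness would force an interpolation between two admissible pairs that must still produce a contraction.
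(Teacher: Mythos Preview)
Your overall architecture---contraction in a Strichartz space for $(\ast_{0},F_{0})$, then read off $F_{\pm}$ from the Duhamel tails---is exactly what the paper does (Lemma~\ref{3.19} plus the reduction at the start of Section~5.3). But your index bookkeeping for the nonlinear term is off in a way that misidentifies the source of the threshold. In the $S^{0}\times S^{0}$ framework the relevant inhomogeneous Strichartz estimate is (\ref{12.5}), not (\ref{13}): because the Duhamel kernel carries a factor $\frac{\sin((t-s)\sqrt{-\Delta})}{\sqrt{-\Delta}}$, one Sobolev embedding is absorbed and the forcing must be placed in $L_{t}^{\tilde q'}\ell_{x}^{\frac{d\tilde r'}{d+\tilde r'}}$, not $L_{t}^{\tilde q'}\ell_{x}^{\tilde r'}$. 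Hence the H\"older step reads $\bigl\||u_{1}|^{p-1}u_{1}\bigr\|_{L_{t}^{q'}\ell_{x}^{\frac{dr'}{d+r'}}}=\|u_{1}\|_{L_{t}^{pq'}\ell_{x}^{p\frac{dr'}{d+r'}}}^{p}$, and the admissibility requirement is on $\bigl(pq',\,p\tfrac{dr'}{d+r'}\bigr)$, not on $(pq',pr')$. A quick computation shows that with your pair the threshold would be $p\ge 1+\tfrac{2}{\beta_{d}}$, whereas the Sobolev-shifted pair gives exactly $p\ge 1+\tfrac{2}{\beta_{d}}+\tfrac{2}{d}$; so the extra $\tfrac{2}{d}$ comes from the $\frac{1}{\sqrt{-\Delta}}$ smoothing, not from the admissibility condition you wrote.

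One further point: the paper does not close the self-map and the contraction with a single admissible pair. It uses one pair $(q_{2},r_{2})$ with $\bigl(pq_{2}',\,p\tfrac{dr_{2}'}{d+r_{2}'}\bigr)$ admissible for the self-map bound $(P_{n})\Rightarrow\widetilde{(P_{n})}$, and a separate triple $(q_{3},r_{3}),(q_{4},r_{4}),(q_{5},r_{5})$ satisfying $\tfrac{1}{q_{4}}+\tfrac{1}{q_{5}}=\tfrac{1}{q_{3}'}$, $\tfrac{1}{r_{4}}+\tfrac{1}{r_{5}}=\tfrac{1}{r_{3}'}+\tfrac{1}{d}$, $q_{1}=(p-1)q_{4}$, $r_{1}=(p-1)r_{4}$ to run the Lipschitz estimate for the contraction. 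Your remark that ``any slackness would force an interpolation between two admissible pairs'' is on the right track, but in fact the paper always uses this multi-pair scheme, and you should plan on it rather than hope a single $(q,r)$ suffices.
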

		\begin{thm}\label{S3}
			For $3\le d\le 5$, $p\ge \frac{d(\beta_{d}+2)}{\beta_{d}(d-2)}$, there exists $\delta>0$, such that if $F_{0}\in X$, $\|F_{0}\|_{X}<\delta$, then we have a unique strong solution $\widetilde{u}\in C(\mathbb{R};X)$ of the equation (\ref{DNLW}), with the initial data $F_{0}$. Furthermore, there exists $F_{\pm}\in X$, satisfying all assertions in Theorem \ref{S1.5}.
		\end{thm}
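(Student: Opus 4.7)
The plan is to prove Theorem \ref{S3} by a direct Strichartz-based contraction mapping, running in parallel with the proof of Theorem \ref{S1.5} but with the reference time shifted from $-\infty$ to $0$. I would define the Duhamel map
\begin{equation*}
\Phi(\widetilde{u})(t) := U_0(t) F_0 + \int_0^t U_0(t-s) P\widetilde{u}(s)\, ds
\end{equation*}
on the closed ball $B_\eta := \{\widetilde{u} \in S(\mathbb{R}\times\mathbb{Z}^d) : \|\widetilde{u}\|_{S(\mathbb{R}\times\mathbb{Z}^d)} \le \eta\}$ and look for a fixed point. The homogeneous and inhomogeneous Strichartz estimates for the discrete wave group, already used in the proof of Theorem \ref{S1.5}, provide
\begin{equation*}
\|\Phi(\widetilde{u})\|_{S(\mathbb{R}\times\mathbb{Z}^d)} \lesssim \|F_0\|_X + \|P\widetilde{u}\|_{N(\mathbb{R}\times\mathbb{Z}^d)}.
\end{equation*}

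Next, I would close the nonlinear estimate in exactly the same way as for Theorem \ref{S1.5}. The hypothesis $p \ge \frac{d(\beta_d+2)}{\beta_d(d-2)}$ is precisely what allows one to select a $\beta_d$-admissible pair $(q,r)$ and its dual $(q',r')$ so that, combining H\"{o}lder in time with the Sobolev-type embedding on $\mathbb{Z}^d$ associated with $\dot{H}^1$ (together with the lattice inclusion $\ell^p \subseteq \ell^q$ for $p \le q$), one obtains
\begin{equation*}
\|P\widetilde{u}\|_{N(\mathbb{R}\times\mathbb{Z}^d)} \lesssim \|\widetilde{u}\|_{S(\mathbb{R}\times\mathbb{Z}^d)}^{p},
\end{equation*}
together with the matching Lipschitz difference estimate $\|P\widetilde{u} - P\widetilde{v}\|_N \lesssim (\|\widetilde{u}\|_S^{p-1}+\|\widetilde{v}\|_S^{p-1})\|\widetilde{u}-\widetilde{v}\|_S$. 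Choosing $\eta$ small and then $\delta \ll \eta$, the map $\Phi$ contracts on $B_\eta$, producing a unique global strong solution $\widetilde{u} \in C(\mathbb{R};X) \cap S(\mathbb{R}\times\mathbb{Z}^d)$.

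The scattering statements then follow the pattern already established for Theorem \ref{S1.5}. I would define
\begin{equation*}
F_\pm := F_0 + \int_0^{\pm\infty} U_0(-s) P\widetilde{u}(s)\, ds,
\end{equation*}
where the integrals converge absolutely in $X$ because the nonlinear estimate applied on the tail intervals $(T,\pm\infty)$ tends to $0$ as $|T|\to\infty$. Since $U_0(t)$ is unitary on $X$, one gets $\|\widetilde{u}(t) - U_0(t) F_\pm\|_X \to 0$ as $t\to\pm\infty$, which is equivalent to $(\ast_{\pm\infty},F_\pm)$, and all remaining assertions of Theorem \ref{S1.5} are recovered. The energy conservation (\ref{conservation}) is obtained by a standard regularization of the data together with the $X$-continuity of $\widetilde{u}$.

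The main obstacle is the nonlinear Strichartz estimate at the threshold exponent $p = \frac{d(\beta_d+2)}{\beta_d(d-2)}$. Unlike in the Euclidean setting, on $\mathbb{Z}^d$ the $\ell^p$-inclusion runs in the ``wrong'' direction, so the full $\dot{H}^1$ regularity of the first component of $\widetilde{u}$ must be exploited from the derivative side through the $S^1$-component of the Strichartz norm, rather than through Gagliardo--Nirenberg type tricks. Moreover, the uniform dispersive decay rate $\beta_d$ for the discrete wave propagator is only known up to $d=5$, which explains the dimensional constraint $3 \le d \le 5$. Calibrating these two restrictions so that they coincide exactly with the stated power threshold is the delicate bookkeeping at the heart of the argument.
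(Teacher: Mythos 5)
Your proposal follows essentially the same route as the paper: a small-data contraction of the Duhamel map in the global Strichartz space $S(\mathbb{R}\times\mathbb{Z}^{d})$ (this is the paper's Lemma \ref{3.19}, with the same admissible-pair bookkeeping borrowed from the proof of Theorem \ref{S1.5}), followed by defining $F_{\pm}$ through the tail of the Duhamel integral, whose convergence in $X$ is reduced via the Strichartz/H\"{o}lder estimate to $\widetilde{u}\in S(\mathbb{R}\times\mathbb{Z}^{d})$. The argument is correct and matches the paper's proof in all essential respects.
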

		\begin{rem}
			Compared with Theorem \ref{S2} and Theorem \ref{S3}, Theorem \ref{S1} and Theorem \ref{S1.5} are only suitable for the defocusing case. However, we all require some smallness conditions in proving the asymptotic completeness. In fact, based on Remark \ref{3.18}, the solution will blow up in the focusing case, without smallness condition, which directly breaks the asymptotic completeness. 
		\end{rem}
		\vspace{10pt}
		\subsection{Organization and notations}
		We organize the paper as follows. In Section 2, we establish the Strichartz estimate by reducing it to the decay estimate of the oscillatory integrals. We then present the most recent advances in this direction, which are essential for satisfying the technical conditions required in \cite{4}. In Section 3, we will introduce two abstract frameworks in \cite{4}. In Section 4, we use one of the frameworks to establish our desired scattering theory, giving the proof of Theorem \ref{main}, \ref{main2}, \ref{main3}. In Section 5, we reconstruct the scattering theory by proving Theorem \ref{S1}, \ref{S1.5}, \ref{S2}, \ref{S3}, with the Strichartz estimate.

		\par 
		Finally, we review some frequently-used notations, where $I$ is a time interval.
		\begin{itemize}
			\item By $u\in C^{k}(I; B)$ (resp. $L^{p}(I;B)$) for a Banach space $B,$ we mean $u$ is a $C^{k}$ (resp. $L^{p}$) map from $I$ to $B;$ see page 301 in \cite{32}.
			\item By $u\in C_{0}(\Z^{d}\times I)$, we mean $u$ has a compact support on $\Z^{d}\times I$.
			\item By $A\lesssim B$ (resp. $A\approx B$), we mean there is a positive constant $C$, such that $A\le CB$ (resp. $C^{-1}B\le A \le C B$). If the constant $C$ depends on $p,$ then we write $A\lesssim_{p}B$ (resp. $A\approx_{p} B$).
			\item By $T\in \mathcal{B}(U,V)$ for the normed spaces $U,V$, we mean $T:U\to V$ is a bounded linear map.
			\item Set $\langle m \rangle:=(1+|m|^{2})^{1/2}$ and $|m|:=(\sum_{j=1}^{d}|m_{j}|^{2})^{1/2}$ for $m=(m_{1},\cdots, m_{d})\in \Z^{d}$.
			\item By $u\in\mathcal{O}(\Omega)$ for a region $\Omega$, we mean $u$ is a holomorphic function on $\Omega$.
			\item By $(f)_{+}$ for a real valued function $f$, we refer to the nonnegative part of $f$.
		\end{itemize}

		\section{The uniform decay estimate of DW}
		In this section, we will show the relationship between the oscillatory integral theory and the uniform decay estimate of DW, and summarize the uniform decay estimate obtained in \cite{1,2,3}. And we use the Newton polyhedron method to reprove the uniform decay estimate in 3D. Relatively speaking, it will be more concise than the proof in \cite{1}.
		
		\subsection{Basic tools in DW}
		We now focus on the following DW with the vanishing initial displacement.
		\begin{equation}\label{DLW}
			\left\{
			\begin{aligned}
				& \partial_{t}^{2} u(x,t)-\Delta u(x,t) =0,  \\
				& u(x,0)=0, \partial_{t}u(x,0)=g(x),\quad (x,t)\in \mathbb{Z}^d\times \mathbb{R}.
			\end{aligned}
			\right.
		\end{equation}
		\begin{rem}\label{rem}
			In fact, it can be seen in the subsequent proof that the uniform decay estimate of DW is also valid for general displacement $f(x)$. As the key to such a decay estimate is the nature of the phase function, which does not change in the general case. For the convenience of description, we only consider such a special case. Furthermore, $\partial_{t}u$ has the same uniform decay estimate as $u$, since $\partial_{t}u$ also satisfies the DW, with the initial displacement $g(x)$ and the vanishing initial velocity.
		\end{rem}
		
		To express the solution of DW in the form of the oscillatory integrals, we shall introduce the discrete Fourier transform and its inverse. 
		\begin{defi}
			For $u\in \ell^1(\mathbb{Z}^d)$, $g\in L^{1}(\mathbb{T}^{d})$, the discrete Fourier transform $\mathcal{F}$ and its inverse $\mathcal{F}^{-1}$ are defined as
			\begin{equation*}
				\mathcal{F}(u)(x):=\sum_{k\in \mathbb{Z}^{d}}u(k)e^{-ikx}, \quad \forall x\in \mathbb{T}^{d},
			\end{equation*}
			\begin{equation*}
				\mathcal{F}^{-1}(g)(k):=\frac{1}{(2\pi)^{d}}\int_{\mathbb{T}^d} g(x)e^{ikx}dx, \quad \forall k\in \mathbb{Z}^{d},
			\end{equation*}
			where $\mathbb{T}^{d}$ is the $d$-dimensional torus, parameterized as $[-\pi,\pi)^d$. From basic knowledge in the Fourier series, $\mathcal{F}$ can be extended to an isomorphism between $\ell^{2}(\mathbb{Z}^{d})$ and $L^{2}(\mathbb{T}^{d})$.
		\end{defi} 
		Next we recall the definition of the convolution on $\Z^{d}$ and the Young inequality. 
		\begin{defi}
			For $f\in\ell^{p}(\mathbb{Z}^{d})$, $g\in\ell^{q}(\mathbb{Z}^{d})$, where $1\le p,q\le\infty$, $\frac{1}{p}+\frac{1}{q}=1$, we define the convolution of $f$ and $g$ as
			\begin{equation*}
				(f\ast g)(x):=\sum_{y\in\mathbb{Z}^{d}}f(y)g(x-y), \quad x\in \mathbb{Z}^{d}.
			\end{equation*}
		\end{defi}

		\begin{thm}\label{Young}
			For $1\le p,q,r\le\infty$, satisfying
			\begin{equation*}
				1+\frac{1}{q}=\frac{1}{p}+\frac{1}{r},
			\end{equation*}
			we have the following Young inequality for $f\in \ell^{p}(\mathbb{Z}^{d}), g\in \ell^{r}(\mathbb{Z}^{d})$.
			\begin{equation*}
				\|f\ast g\|_{\ell^{q}(\mathbb{Z}^{d})}\lesssim_{p,q,r}\|f\|_{\ell^{p}(\mathbb{Z}^{d})}\|g\|_{\ell^{r}(\mathbb{Z}^{d})}.
			\end{equation*}
		\end{thm}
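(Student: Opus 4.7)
The plan is the classical direct proof of Young's convolution inequality via a three-term H\"older inequality, which transfers verbatim from $\R^d$ with Lebesgue measure to $\Z^d$ with counting measure. First I handle the degenerate cases. When $p=1$, the hypothesis forces $q=r$, and $\|f\ast g\|_{\ell^r}\le\|f\|_{\ell^1}\|g\|_{\ell^r}$ follows from the representation $(f\ast g)(x)=\sum_y f(y)\,g(x-y)$ and Minkowski's inequality in $\ell^r$; the case $r=1$ is symmetric; the case $q=\infty$ reduces via $1/p+1/r=1$ to the usual two-term H\"older inequality. Henceforth I assume $1<p,r,q<\infty$.

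The key device is the pointwise decomposition
\begin{equation*}
|f(y)\,g(x-y)|=\bigl(|f(y)|^p|g(x-y)|^r\bigr)^{1/q}\cdot|f(y)|^{1-p/q}\cdot|g(x-y)|^{1-r/q},
\end{equation*}
which is an identity since $p/q+(1-p/q)=1$ and likewise for $r$. The hypothesis $1+1/q=1/p+1/r$ forces $q>p$ and $q>r$, so the auxiliary exponents $\alpha:=pq/(q-p)$ and $\beta:=rq/(q-r)$ lie in $(1,\infty)$, and a short arithmetic check gives $1/q+1/\alpha+1/\beta=1/p+1/r-1/q=1$. Applying the three-term H\"older inequality on $\Z^d$ with exponents $(q,\alpha,\beta)$ to the decomposition above yields pointwise in $x$
\begin{equation*}
|(f\ast g)(x)|\le\Big(\sum_{y\in\Z^d}|f(y)|^p|g(x-y)|^r\Big)^{\!1/q}\|f\|_{\ell^p}^{1-p/q}\|g\|_{\ell^r}^{1-r/q}.
\end{equation*}

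Raising to the $q$th power, summing over $x\in\Z^d$, and interchanging the order of summation (Fubini for counting measure) in the resulting double sum evaluates it as $\|f\|_{\ell^p}^p\|g\|_{\ell^r}^r$; collecting exponents gives $\|f\ast g\|_{\ell^q}^q\le\|f\|_{\ell^p}^q\|g\|_{\ell^r}^q$, so the stated inequality holds with absolute constant $1$. There is no substantial obstacle; the one item requiring care is checking that $\alpha,\beta$ are positive and that the H\"older exponents $(q,\alpha,\beta)$ sum reciprocally to $1$, after which every remaining step is a routine H\"older/Fubini manipulation.
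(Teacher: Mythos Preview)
Your argument is correct and is the standard proof of Young's inequality via the three-term H\"older decomposition; the exponent bookkeeping and the handling of the endpoint cases $p=1$, $r=1$, $q=\infty$ are all fine, and the constant is indeed $1$. The paper itself states this theorem without proof, treating it as a classical fact, so there is no ``paper's own proof'' to compare against.
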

		\vspace{10pt}
		\subsection{Summary of the uniform decay estimate}
		
		Now the uniform decay estimate of DW can be reduced to some oscillatory integrals. In fact, applying the discrete Fourier transform to the equation (\ref{DLW}), we can derive $u=G(\cdot,t)\ast g$, where
		\begin{equation}\label{Green}
			G(x,t):=\frac{1}{(2\pi)^{d}}\int_{\mathbb{T}^d}e^{ix\cdot\xi }\frac{\sin(t\omega(\xi))}{\omega(\xi)}d\xi, \quad \omega(\xi):=\left(\sum_{j=1}^{d}2-2\cos(\xi_{j})\right)^{\frac{1}{2}}.
		\end{equation}
		From a simple observation, we have $G(x,t)=-$Im $I(v,t)$, $x=vt$,
		\begin{equation}\label{Oscillatory}
			I(v,t):=\frac{1}{(2\pi)^{d}}\int_{\mathbb{T}^d}e^{it\phi(v,\xi) }\frac{1}{\omega(\xi)}d\xi, \quad \phi(v,\xi):=v\cdot\xi-\omega(\xi).
		\end{equation}
		Using the Young inequality, we also have 
		\begin{equation*}
			\sup_{x\in\mathbb{Z}^{d}}|u(x,t)|=\|u(\cdot,t)\|_{\ell^{\infty}(\mathbb{Z}^{d})}\lesssim \|g\|_{\ell^{1}(\mathbb{Z}^{d})}\|G(\cdot,t)\|_{\ell^{\infty}(\mathbb{Z}^{d})}\le\|g\|_{\ell^{1}(\mathbb{Z}^{d})}\cdot\sup_{v\in\mathbb{R}^{d}}|I(v,t)|.
		\end{equation*}
		Then the whole problem is reduced to the uniform decay estimate of the oscillatory integral $I(v,t)$.
		
		Based on the results in  \cite{1,2,3}, we can summarize the sharp uniform decay estimate as follows.
		\noindent
		\begin{itemize}
			\item $d=2$ : $\sup_{v\in \mathbb{R}^{d}}|I(v,t)|\lesssim (1+|t|)^{-\frac{3}{4}}=\langle t\rangle^{-\frac{3}{4}}$.
			\item $d=3$ : $\sup_{v\in \mathbb{R}^{d}}|I(v,t)|\lesssim (1+|t|)^{-\frac{7}{6}}=\langle t\rangle^{-\frac{7}{6}}$.
			\item $d=4$ : $\sup_{v\in \mathbb{R}^{d}}|I(v,t)|\lesssim (1+|t|)^{-\frac{3}{2}}\log(2+|t|)=\langle t\rangle^{-\frac{3}{2}}\log(2+|t|)$.
			\item $d=5$ : $\sup_{v\in \mathbb{R}^{d}}|I(v,t)|\lesssim (1+|t|)^{-\frac{11}{6}}=\langle t\rangle^{-\frac{11}{6}}$.
		\end{itemize}
		Directly, we also have 
		\noindent
		\begin{itemize}
			\item $d=2$ : $\|u(\cdot,t)\|_{\ell^{\infty}(\mathbb{Z}^{d})}\lesssim (1+|t|)^{-\frac{3}{4}}\|g\|_{\ell^{1}(\mathbb{Z}^{d})}=\langle t\rangle^{-\frac{3}{4}}\|g\|_{\ell^{1}(\mathbb{Z}^{d})}$.
			\item $d=3$ : $\|u(\cdot,t)\|_{\ell^{\infty}(\mathbb{Z}^{d})}\lesssim (1+|t|)^{-\frac{7}{6}}\|g\|_{\ell^{1}(\mathbb{Z}^{d})}=\langle t\rangle^{-\frac{7}{6}}\|g\|_{\ell^{1}(\mathbb{Z}^{d})}$.
			\item $d=4$ : $\|u(\cdot,t)\|_{\ell^{\infty}(\mathbb{Z}^{d})}\lesssim (1+|t|)^{-\frac{3}{2}}\log(2+|t|)\|g\|_{\ell^{1}(\mathbb{Z}^{d})}=\langle t\rangle^{-\frac{3}{2}}\log(2+|t|)\|g\|_{\ell^{1}(\mathbb{Z}^{d})}$.
			\item $d=5$ :  $\|u(\cdot,t)\|_{\ell^{\infty}(\mathbb{Z}^{d})}\lesssim (1+|t|)^{-\frac{11}{6}}\|g\|_{\ell^{1}(\mathbb{Z}^{d})}=\langle t\rangle^{-\frac{11}{6}}\|g\|_{\ell^{1}(\mathbb{Z}^{d})}$.
		\end{itemize}
		\vspace{5pt}
		To derive more general uniform decay estimate of DW, we need the following simple lemma.
		\begin{lemma}\label{lemma}
			Considering the following integral $\Omega(t)$,
			\begin{equation*}
				\Omega(t):=\int_{\mathbb{T}^d}\frac{\sin^{2}(t\omega(\xi))}{\omega(\xi)^{2}} d\xi, \quad \omega(\xi)=\left(\sum_{j=1}^{d}2-2\cos(\xi_{j})\right)^{\frac{1}{2}}=\left(\sum_{j=1}^{d}4\sin^{2}(\frac{\xi_{j}}{2})\right)^{\frac{1}{2}}.
			\end{equation*}
			\begin{itemize}
				\item if $d\ge3$, then $\Omega(t)\lesssim _{d} 1$.
				\item if $d=2$, then $\Omega(t)\lesssim_{d} \log(t+2)$.
			\end{itemize}
		\end{lemma}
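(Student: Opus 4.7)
The plan is to reduce everything to the local behavior of $\omega$ near its only zero. On $\mathbb{T}^{d}=[-\pi,\pi)^{d}$, the identity $\omega(\xi)^{2}=\sum_{j=1}^{d}4\sin^{2}(\xi_{j}/2)$ shows that $\omega$ vanishes only at $\xi=0$; combined with the one-variable equivalence $|\sin(s/2)|\approx|s|$ on $[-\pi,\pi]$, this yields the uniform equivalence $\omega(\xi)\approx|\xi|$ throughout $\mathbb{T}^{d}$. Thus the integrand has an isolated singularity at the origin of order $|\xi|^{-2}$, and the whole question reduces to how this singularity interacts with the bounded oscillatory factor $\sin^{2}(t\omega)$.

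For $d\ge 3$ the function $|\xi|^{-2}$ is locally integrable on $\mathbb{R}^{d}$, so my approach is to use the trivial bound $\sin^{2}(t\omega)\le 1$ and estimate
\begin{equation*}
\Omega(t)\le \int_{\mathbb{T}^{d}}\frac{d\xi}{\omega(\xi)^{2}}\lesssim \int_{|\xi|\le \sqrt{d}\,\pi}\frac{d\xi}{|\xi|^{2}}\lesssim_{d}1,
\end{equation*}
which is uniform in $t$ and settles the first assertion.

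For $d=2$ the integral $\int|\xi|^{-2}\,d\xi$ is logarithmically divergent near the origin and the trivial bound fails; one has to trade part of the singularity for a power of $|t|$ via the complementary inequality $|\sin(t\omega)|\le t\omega$. The case $|t|\le 1$ is easy (use $\sin^{2}(t\omega)\le t^{2}\omega^{2}$ and $|\mathbb{T}^{2}|<\infty$), so I would assume $|t|\ge 1$ and split $\mathbb{T}^{2}$ into $A_{1}:=\{\xi\in\mathbb{T}^{2}:\omega(\xi)\le 1/|t|\}$ and $A_{2}:=\mathbb{T}^{2}\setminus A_{1}$. On $A_{1}$ the bound $\sin^{2}(t\omega)/\omega^{2}\le t^{2}$ together with $|A_{1}|\lesssim 1/|t|^{2}$ yields an $O(1)$ contribution; on $A_{2}$ the bound $\sin^{2}(t\omega)/\omega^{2}\le \omega(\xi)^{-2}\approx |\xi|^{-2}$ contributes, in polar coordinates, $\int_{1/|t|}^{C}r^{-1}\,dr\lesssim \log(|t|+2)$. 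Summing the two pieces gives the claimed logarithmic bound.

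The argument is elementary throughout; the only real bookkeeping step is choosing the cut-off scale $1/|t|$ in dimension two, which is dictated by balancing the two competing bounds $\sin^{2}(t\omega)\le\min(1,t^{2}\omega^{2})$. I do not anticipate a serious analytic obstacle, since the phase plays no role beyond producing a bounded, nonnegative integrand; this is a variance-type computation rather than a true oscillatory integral estimate.
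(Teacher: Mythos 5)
Your proof is correct. It differs from the paper's in the mechanics rather than the overall shape: the paper isolates the origin with a cutoff, applies the change of variables $\zeta_{j}=2\sin(\xi_{j}/2)$ so that $\omega$ becomes exactly $|\zeta|$ (with a bounded Jacobian), passes to polar coordinates to reduce to $\int_{0}^{1}\sin^{2}(t\rho)\rho^{d-3}\,d\rho$, and for $d=2$ rescales $r=t\rho$ to recognize $\int_{0}^{t}\sin^{2}(r)r^{-1}\,dr\lesssim\log(2+t)$. You instead avoid any change of variables by using the two-sided comparison $\omega(\xi)\approx|\xi|$ on $[-\pi,\pi)^{d}$, dispose of $d\ge3$ by the trivial bound $\sin^{2}\le1$ and local integrability of $|\xi|^{-2}$ (exactly what the paper's radial integral amounts to), and for $d=2$ replace the rescaling by the pointwise bound $\sin^{2}(t\omega)\le\min(1,t^{2}\omega^{2})$ together with a split of the domain at the scale $\omega\sim1/|t|$, balancing $t^{2}\cdot|A_{1}|\lesssim1$ against $\int_{1/|t|}^{C}r^{-1}\,dr\lesssim\log(|t|+2)$. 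Both arguments are elementary and yield the same constants up to equivalence; yours is slightly more self-contained (no Jacobian bookkeeping, and the $|t|\le1$ case is handled explicitly), while the paper's change of variables is the same device it reuses elsewhere (e.g.\ in Remark \ref{4.4}), which is presumably why it is phrased that way there. No gaps: the measure bound $|A_{1}|\lesssim t^{-2}$ follows from $\omega\gtrsim|\xi|$, and the comparison $\omega\approx|\xi|$ is uniform on the torus since $|s|/\pi\le2|\sin(s/2)|\le|s|$ for $|s|\le\pi$.
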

		\begin{proof}
			As $\omega(\xi)$ has a removable singularity at $\xi=0$, we decompose the integral into two parts.
			\begin{equation*}
				\Omega(t)=\int_{B_{\delta}(0)}\frac{\sin^{2}(t\omega(\xi))}{\omega(\xi)^{2}} d\xi+\int_{\mathbb{T}^{d}-B_{\delta}(0)}\frac{\sin^{2}(t\omega(\xi))}{\omega(\xi)^{2}}, d\xi:=\Omega_{1}(t)+\Omega_{2}(t),
			\end{equation*}
			where $\delta\ll1$. Directly, we have
			\begin{equation*}
				\Omega_{2}(t)\le \int_{\mathbb{T}^{d}-B_{\delta}(0)}\frac{1}{\omega(\xi)^{2}} d\xi\lesssim1.
			\end{equation*}
			For $\Omega_{1}(t)$, we change the variables as follows.
			\begin{equation}\label{change}
				\zeta_{j}:=2\sin(\frac{\xi_{j}}{2}), \; j=1,2,\cdots,d.
			\end{equation}
			Then we derive that
			\begin{equation*}
				\Omega_{1}(t)=\int_{U}\frac{\sin^{2}(t|\zeta|)}{|\zeta|^{2}}J(\zeta)d\zeta,
			\end{equation*}
			where $U$ is a sufficiently small neighborhood of $0$, $J(\zeta)$ is the corresponding Jacobi term. Then we have
			\begin{equation*}
				\Omega_{1}(t)=\int_{U}\frac{\sin^{2}(t|\zeta|)}{|\zeta|^{2}}J(\zeta)d\zeta\lesssim\int_{B_{1}(0)}\frac{\sin^{2}(t|\zeta|)}{|\zeta|^{2}}d\zeta
			\end{equation*}
			\begin{equation*}
				=\int_{0}^{1}d\rho\int_{\mathbb{S}_{\rho}}\frac{\sin^{2}(t|\zeta|)}{|\zeta|^{2}}dS\lesssim \int_{0}^{1}\frac{\sin^{2}(t\rho)}{\rho^{3-d}}d\rho.
			\end{equation*}
			If $d\ge3$, we see $\Omega_{1}(t)\lesssim 1$. For $d=2$, let $r:=t\rho$, we have
			\begin{equation*}
				\Omega_{1}(t)\lesssim \int_{0}^{1}\frac{\sin^{2}(t\rho)}{\rho}d\rho=\int_{0}^{t}\frac{\sin^{2}(r)}{r}dr\lesssim \log(2+t).
			\end{equation*}
		\end{proof}
		Finally, we use the Young inequality and derive the following general uniform decay estimate, where $\beta_{d}$ can be referred to Section 1.
		\begin{thm}\label{jj}
			Suppose $u$ is a solution of DW (\ref{DLW}), then for $d=2,3,4,5$, $1\le p,q\le\infty$, $k>2$,
			\begin{equation*}
				1+\frac{1}{q}=\frac{1}{k}+\frac{1}{p},
			\end{equation*}
			we have the following general uniform decay estimate
			\begin{equation*}
				\|u(\cdot,t)\|_{\ell^{q}(\mathbb{Z}^{d})}\lesssim (1+|t|)^{-\beta_{d}(1-\frac{2}{k})}\|g\|_{\ell^{p}(\mathbb{Z}^{d})}.
			\end{equation*}
		\end{thm}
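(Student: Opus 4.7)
The plan is to reduce the general $\ell^p\to\ell^q$ estimate to an $\ell^k$ estimate on the propagator $G(\cdot,t)$, which in turn will be obtained by interpolation between the two endpoints $k=\infty$ (from the oscillatory-integral decay just summarized) and $k=2$ (from Plancherel together with Lemma \ref{lemma}). Since $u(\cdot,t)=G(\cdot,t)\ast g$, Theorem \ref{Young} immediately gives
\begin{equation*}
\|u(\cdot,t)\|_{\ell^{q}(\mathbb{Z}^{d})}\lesssim \|G(\cdot,t)\|_{\ell^{k}(\mathbb{Z}^{d})}\,\|g\|_{\ell^{p}(\mathbb{Z}^{d})}
\end{equation*}
whenever $1+\frac{1}{q}=\frac{1}{k}+\frac{1}{p}$, so the theorem reduces to proving $\|G(\cdot,t)\|_{\ell^{k}(\mathbb{Z}^{d})}\lesssim \langle t\rangle^{-\beta_{d}(1-\frac{2}{k})}$ for every $k>2$.

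For the first endpoint, I would use the identity $G(x,t)=-\mathrm{Im}\,I(v,t)$ with $x=vt$ together with the bulleted summary of $\sup_{v}|I(v,t)|$ to obtain
\begin{equation*}
\|G(\cdot,t)\|_{\ell^{\infty}(\mathbb{Z}^{d})}\le \sup_{v\in\mathbb{R}^{d}}|I(v,t)|\lesssim \langle t\rangle^{-\beta_{d}},
\end{equation*}
where the logarithmic factor present in $d=4$ (and, implicitly, the one picked up in $d=2$ below) is absorbed into the $a^{-}$ slack built into the definition of $\beta_{d}$. For the second endpoint, Plancherel applied to $\widehat{G}(\xi,t)=\sin(t\omega(\xi))/\omega(\xi)$ gives
\begin{equation*}
\|G(\cdot,t)\|_{\ell^{2}(\mathbb{Z}^{d})}^{2}=(2\pi)^{-d}\,\Omega(t),
\end{equation*}
so Lemma \ref{lemma} yields $\|G(\cdot,t)\|_{\ell^{2}}\lesssim 1$ for $d\ge 3$ and $\|G(\cdot,t)\|_{\ell^{2}}\lesssim \log^{1/2}(2+|t|)$ for $d=2$.

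Next I would interpolate via the elementary log-convexity of $\ell^{k}$ norms: for $2<k\le\infty$, writing $|G|^{k}=|G|^{2}\cdot|G|^{k-2}$ and estimating the second factor by $\|G\|_{\ell^{\infty}}^{k-2}$ gives
\begin{equation*}
\|G(\cdot,t)\|_{\ell^{k}(\mathbb{Z}^{d})}\le \|G(\cdot,t)\|_{\ell^{2}(\mathbb{Z}^{d})}^{2/k}\,\|G(\cdot,t)\|_{\ell^{\infty}(\mathbb{Z}^{d})}^{1-2/k}\lesssim \langle t\rangle^{-\beta_{d}(1-\frac{2}{k})},
\end{equation*}
which, combined with Young's inequality above, yields the claim. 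The only mildly delicate step is the logarithmic bookkeeping in $d=2$ and $d=4$: one has to check that the product of the logarithmic factors coming from the two endpoints can be absorbed into the $\langle t\rangle^{-\beta_{d}(1-\frac{2}{k})}$ on the right, but this is immediate from $\log(2+|t|)\lesssim_{\varepsilon}\langle t\rangle^{\varepsilon}$ for any $\varepsilon>0$, exploiting precisely the strict inequality built into $\beta_{2}=(3/4)^{-}$ and $\beta_{4}=(3/2)^{-}$. The hypothesis $k>2$ is used here to guarantee that the exponent $1-2/k$ is strictly positive, so that the $\varepsilon$-room is available; everything else is routine.
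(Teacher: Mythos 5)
Your proposal is correct and follows essentially the same route as the paper: Young's inequality reduces the claim to an $\ell^{k}$ bound on $G(\cdot,t)$, which is obtained by interpolating the $\ell^{\infty}$ oscillatory-integral decay against the $\ell^{2}$ bound coming from Plancherel and Lemma \ref{lemma}. Your explicit bookkeeping of the logarithmic factors for $d=2,4$ (absorbed via the $a^{-}$ slack in $\beta_{2},\beta_{4}$, using $k>2$) is exactly the detail the paper leaves implicit when it treats only $d=3,4,5$ and declares $d=2$ ``similar.''
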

		\begin{proof}
			For convenience, we only deal with the case $d=3,4,5$ and the proof of the case $d=2$ is similar. Based on the uniform decay estimate established above and Lemma \ref{lemma}, we can now derive that
			\begin{equation*}
				\|G(\cdot,t)\|_{\ell^{\infty}(\mathbb{Z}^{d})}\lesssim (1+|t|)^{-\beta_{d}}, \quad \|G(\cdot,t)\|_{\ell^{2}(\mathbb{Z}^{d})}\lesssim 1.
			\end{equation*}
			From the interpolation, we see that
			\begin{equation*}
				\|G(\cdot,t)\|_{\ell^{k}(\mathbb{Z}^{d})}\lesssim (1+|t|)^{-\beta_{d}(1-\frac{2}{k})}, \quad k\ge2.
			\end{equation*}
			Using the Young inequality, we can complete the proof as follows.
			\begin{equation*}
				\|u(\cdot,t)\|_{\ell^{q}(\mathbb{Z}^{d})}=\|G(\cdot,t)\ast g\|_{\ell^{q}(\mathbb{Z}^{d})}\lesssim \|G(\cdot,t)\|_{\ell^{k}(\mathbb{Z}^{d})}\|g\|_{\ell^{p}(\mathbb{Z}^{d})}\lesssim (1+|t|)^{-\beta_{d}(1-\frac{2}{k})}\|g\|_{\ell^{p}(\mathbb{Z}^{d})}.
			\end{equation*}
		\end{proof}
		\vspace{10pt}
		\subsection{Uniform decay estimate of DW in 3 dimension}
		In the following content, we obey the notations in \cite{2}, and reprove the 3-dimensional uniform decay estimate in \cite{1}, with Newton polyhedron method. Before proceeding with the content below, it is recommended to familiarize oneself with some notations and properties of the Newton polyhedra and the uniform decay estimate presented in the Appendix.
		
		Technically, the integral (\ref{Oscillatory}) is on $d$-torus $\mathbb{T}^{d}$, but the oscillatory integral theory is always set in $\mathbb{R}^{d}$. To overcome this, we need the following lemma.
		\begin{lemma}\label{partition}
			There exists $\eta\in C_{c}^{\infty}(\mathbb{R}^{d})$, satisfying 
			\begin{equation}\label{q}
				\sum_{n\in\mathbb{Z}^{d}}\eta(\xi+2\pi n)=1, \quad \forall \xi\in \mathbb{R}^{d}.
			\end{equation}
		\end{lemma}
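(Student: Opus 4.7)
The plan is to construct $\eta$ as a normalized smooth bump function, using the standard periodization trick. First I would fix a non-negative $\varphi\in C_c^\infty(\mathbb{R}^d)$ that is strictly positive on the closed cube $[-\pi,\pi]^d$ and supported in a slightly larger cube, say $(-\pi-\epsilon,\pi+\epsilon)^d$ for some small $\epsilon>0$. Such a $\varphi$ is easy to produce by taking a product of one-dimensional bump functions of the form $\psi(\xi_j)$ with $\psi\in C_c^\infty(\mathbb{R})$, $\psi\ge 0$, and $\psi>0$ on $[-\pi,\pi]$.

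Next I would form the periodization
\begin{equation*}
\Phi(\xi):=\sum_{n\in\mathbb{Z}^d}\varphi(\xi+2\pi n),\qquad \xi\in\mathbb{R}^d.
\end{equation*}
Because $\varphi$ has compact support, the sum is locally finite, so $\Phi\in C^\infty(\mathbb{R}^d)$ and $\Phi$ is manifestly $2\pi\mathbb{Z}^d$-periodic. Since the translates $[-\pi,\pi]^d+2\pi n$ cover $\mathbb{R}^d$ and $\varphi>0$ on $[-\pi,\pi]^d$, one checks that $\Phi(\xi)>0$ for every $\xi\in\mathbb{R}^d$.

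I would then define
\begin{equation*}
\eta(\xi):=\frac{\varphi(\xi)}{\Phi(\xi)}.
\end{equation*}
Since $\Phi$ is smooth and strictly positive, the ratio is smooth, and $\eta$ inherits the compact support of $\varphi$, so $\eta\in C_c^\infty(\mathbb{R}^d)$. To verify (\ref{q}), I would compute
\begin{equation*}
\sum_{n\in\mathbb{Z}^d}\eta(\xi+2\pi n)=\sum_{n\in\mathbb{Z}^d}\frac{\varphi(\xi+2\pi n)}{\Phi(\xi+2\pi n)}=\frac{1}{\Phi(\xi)}\sum_{n\in\mathbb{Z}^d}\varphi(\xi+2\pi n)=\frac{\Phi(\xi)}{\Phi(\xi)}=1,
\end{equation*}
where the periodicity $\Phi(\xi+2\pi n)=\Phi(\xi)$ is used in the second step.

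There is essentially no obstacle here; the only mild technical point is ensuring $\Phi>0$ everywhere, which is why the initial bump $\varphi$ must be strictly positive on the whole closed fundamental domain $[-\pi,\pi]^d$ (not just its interior). Once this is arranged, the proof reduces to the two displays above.
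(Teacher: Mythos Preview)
Your proof is correct and complete; the normalization-by-periodization trick you use is the standard textbook construction, and the only delicate point (positivity of $\Phi$) you handle explicitly.

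The paper takes a different route. It works first in dimension one, starting from the tent function $\theta(\xi)=(1-|\xi|/2\pi)_{+}$, whose $2\pi$-translates already sum to $1$ pointwise, and then mollifies by convolving with a $\psi\in C_c^\infty(\mathbb{R})$ of total mass one; the convolution distributes over the sum and preserves the identity. The $d$-dimensional $\eta$ is then obtained as the tensor product of the one-dimensional ones. Compared with your argument, the paper avoids the division step entirely by beginning with a (non-smooth) exact partition of unity and smoothing it, whereas you begin with an arbitrary positive bump and force the partition property by dividing. Your approach is slightly more robust (it works for any strictly positive bump on the fundamental domain, with no need to know an explicit piecewise-linear partition), while the paper's approach yields an $\eta$ that is automatically nonnegative and has an explicit product structure without having to argue that a denominator is bounded away from zero.
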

		\begin{proof}
		At first, we consider the case $d=1$. Take $\psi\in C_{c}^{\infty}(\mathbb{R})$, we define 
		\begin{equation*}
			\eta(\xi):=(\psi\ast\theta)(\xi), \quad \theta(\xi):=\left(1-\frac{|x|}{2\pi}\right)_{+}.
		\end{equation*}
		From the graph of $\theta(\xi)$, we see that $\sum_{n\in \Z}\theta(\xi+2\pi n):=\sum_{n\in \Z}\theta_{n}(\xi)=1, \forall \xi\in \mathbb{R}$. Thus, 
		\begin{equation*}
			\sum_{n\in\mathbb{Z}^{d}}\eta(\xi+2\pi n)=\sum_{n\in\mathbb{Z}^{d}}(\psi\ast\theta)(\xi+2\pi n)=\sum_{n\in\mathbb{Z}^{d}}(\psi\ast\theta_{n})(\xi)=1, \quad \forall \xi\in \mathbb{R}.
		\end{equation*}
		For general $d$, let 
		\begin{equation*}
			\eta(\xi):=\eta(\xi_{1})\eta(\xi_{2})\cdots\eta(\xi_{d}), \quad \xi=(\xi_{1},\xi_{2},\cdots,\xi_{d}).
		\end{equation*}
		Then for $n=(n_{1},n_{2}\cdots n_{d})$, 
		\begin{equation*}
			\sum_{n\in\mathbb{Z}^{d}}\eta(\xi+2\pi n)=\sum_{n_{1},n_{2},\cdots, n_{d}\in\mathbb{Z}}\eta_{n_{1}}(\xi_{1})\eta_{n_{2}}(\xi_{2})\cdots\eta_{n_{d}}(\xi_{d})=1, \quad \forall \xi\in \mathbb{R}^{d},
		\end{equation*}
		which completes the proof.
		\end{proof}
		
		Now we can easily transform the integral (\ref{Oscillatory}) into the integral on $\mathbb{R}^{d}$ as follows.
		\begin{equation}\label{Rd}
			I(v,t)=\frac{1}{(2\pi)^{d}}\sum_{n\in \mathbb{Z}^{d}}\int_{\mathbb{T}^d}e^{it\phi(v,\xi)}\frac{\eta(\xi+2\pi n)}{\omega(\xi)}d\xi=\frac{1}{(2\pi)^{d}}\int_{\mathbb{R}^d}e^{it\phi(v,\xi)}\frac{\eta(\xi)}{\omega(\xi)}d\xi.
		\end{equation}
		Based on the stationary phase method, we focus on the degenerate points of the phase function $\phi(v,\xi)$.
		
		From the simple calculation, we see the following expressions of the gradient and the Hessian.
		\begin{itemize}
			\item 
			\begin{equation*}
				\nabla\omega(\xi)=\frac{1}{\omega(\xi)}\left(\sin(\xi_{1}),\cdots, \sin(\xi_{d})\right), \quad \xi\in \mathbb{T}^{d}-\lbrace 0\rbrace.
			\end{equation*}
			\item 
			\begin{equation*}
				Hess_{\xi}\;\omega(\xi):=\left(\omega_{ij}(\xi)\right)_{i,j=1}^{d}=\left(\frac{-\sin(\xi_{i})\sin(\xi_{j})+\delta_{ij}\cos(\xi_{i})\omega(\xi)^{2}}{\omega(\xi)^{3}}\right)_{i,j=1}^{d}, \quad \xi\in \mathbb{T}^{d}-\lbrace 0\rbrace,
			\end{equation*}
			where $\delta_{ij}$ is the Kronecker function.
		\end{itemize}
		Notice that $|\sin(x)|\le2|\sin(\frac{x}{2})|, \; \forall x\in [-\pi,\pi]$, then
		\begin{equation*}
			|\nabla\omega(\xi)|^{2}=\frac{\sum_{j=1}^{d}\sin^{2}(\xi_{j})}{\sum_{j=1}^{d}4\sin^{2}(\frac{\xi_{j}}{2})}\le 1.
		\end{equation*}
		Thus for $v\in B_{1}(0)^{c}$, we have $\nabla\phi(v,\xi)\ne 0$, i.e. $\phi$ has no critical point.
		
		To further classify the degenerate points corresponding to their ranks, we need the following lemma.
		\begin{lemma}\label{det}
			$\xi$ is a degenerate point of $\omega$, if and only if 
			\begin{equation}\label{w}
				\prod_{i=1}^{d}\cos(\xi_{i})-\frac{1}{\omega(\xi)^{2}}\sum_{i=1}^{d}\left(\sin^{2}(\xi_{i})\prod_{j\ne i}\cos(\xi_{j})\right)=0,
			\end{equation}
			Equivalently, $\xi$ belongs to one of the following sets.
			\begin{itemize}
				\item $\Gamma_{1}:=\left\lbrace \xi\in\mathbb{T}^{d}-\lbrace0\rbrace \Big| 2d=\sum_{i=1}^{d}\sec(\xi_{i})+\cos(\xi_{i})\right\rbrace.$
				\item $\Gamma_{k}:=\left\lbrace \xi\in\mathbb{T}^{d}-\lbrace0\rbrace \Big| \xi\; \text{has exactly}\; k \text{components equal to}\;\pm\frac{\pi}{2}\right\rbrace$, \quad $k=2,\cdots,d.$
			\end{itemize}
		\end{lemma}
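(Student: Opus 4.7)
The plan is to compute $\det\,\mathrm{Hess}_\xi\,\omega$ directly from the formula in the excerpt. Introducing the shorthand $D(\xi):=\mathrm{diag}(\cos\xi_1,\ldots,\cos\xi_d)$ and the column vector $w(\xi):=(\sin\xi_1,\ldots,\sin\xi_d)^T$, one rewrites
\begin{equation*}
\omega(\xi)^3\,\mathrm{Hess}_\xi\,\omega(\xi)\;=\;\omega(\xi)^2\,D(\xi)\;-\;w(\xi)w(\xi)^T,
\end{equation*}
so degeneracy at $\xi\in\mathbb{T}^d-\{0\}$ is equivalent to $\det(\omega^2 D - ww^T)=0$. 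I would then split cases according to the number $k$ of coordinates of $\xi$ that lie in $\{\pm\pi/2\}$, since these are exactly the coordinates where $D$ fails to be invertible.

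In the case $k=0$, every $\cos\xi_i\ne 0$ and the matrix determinant lemma applied to $\omega^2 D - ww^T$ gives
\begin{equation*}
\det(\omega^2 D - ww^T)\;=\;\omega^{2d}\prod_{i=1}^{d}\cos\xi_i\cdot\Bigl(1-\omega^{-2}\sum_{i=1}^{d}\tfrac{\sin^2\xi_i}{\cos\xi_i}\Bigr).
\end{equation*}
Since the product of cosines is nonzero, vanishing is equivalent to $\omega^2=\sum_i(\sec\xi_i-\cos\xi_i)$; combined with the identity $\omega^2=2d-2\sum_i\cos\xi_i$ this rearranges to $2d=\sum_i(\sec\xi_i+\cos\xi_i)$, which is exactly the defining condition of $\Gamma_1$. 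Multiplying the key equality by $\prod_j\cos\xi_j$ also yields (\ref{w}), giving the claimed equivalence on this piece.

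When $k\ge2$, the rank of $\omega^2 D$ is $d-k\le d-2$ while $ww^T$ has rank $1$, so $\omega^2 D - ww^T$ has rank at most $d-1$ and is automatically singular; these points form $\Gamma_k$, and (\ref{w}) holds trivially because each term on the right still contains a factor $\prod_{j\ne i}\cos\xi_j$ with at least one zero entry. The delicate step is $k=1$: with, say, $\xi_1=\pm\pi/2$ and all other $\cos\xi_j\ne 0$, writing $M:=\omega^2 D - ww^T$ in block form against the $(1,1)$-entry $-1$ and computing the Schur complement gives $\det M=-\omega^{2(d-1)}\prod_{j>1}\cos\xi_j\ne0$, so $\xi$ is non-degenerate and, consistently, (\ref{w}) fails because its right-hand side equals $\omega^{-2}\prod_{j>1}\cos\xi_j$ while its left-hand side is $0$. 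Assembling the three cases yields the stated equivalence. The main obstacle is precisely this $k=1$ subcase: the matrix determinant lemma is unavailable, and one must check by a direct Schur complement argument that the rank-one perturbation $ww^T$ exactly restores nondegeneracy, which is why the lemma separates $\Gamma_1$ (defined by the polynomial identity) from the $\Gamma_k$ with $k\ge2$ rather than stating a single unified condition.
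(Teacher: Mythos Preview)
Your argument is correct and follows essentially the same route as the paper: both rewrite $\omega^3\,\mathrm{Hess}_\xi\,\omega$ as a diagonal matrix minus a rank-one term and then compute the determinant via a Schur/perturbation identity to obtain (\ref{w}). The paper simply states that ``the classification above follows'' after deriving (\ref{w}), whereas you carry out explicitly the case split on $k$ (number of components equal to $\pm\pi/2$), including the Schur-complement check that the $k=1$ case is nondegenerate; this is exactly the detail the paper suppresses.
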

		
		\begin{proof}
			From previous calculation, we can reformulate it as follows.
			\begin{equation*}
				Hess_{\xi}\;\omega(\xi)=\frac{1}{\omega(\xi)}
				\left(\begin{bmatrix}
					\cos(\xi_{1}) & &  \\
					
					&\ddots& \\
					&&\cos(\xi_{d}) \\
					
				\end{bmatrix}-
				\frac{1}{\omega(\xi)^{2}}
				\begin{bmatrix}
					\sin(\xi_{1})\\
					
					\vdots \\
					\sin(\xi_{d})\\
				\end{bmatrix} \cdot
				\begin{bmatrix}
					\sin(\xi_{1})&\cdots&\sin(\xi_{d})\\
				\end{bmatrix}\right).
			\end{equation*}
			Using the perturbation method and the fact that, for $A\in M_{m\times m}$, $D\in M_{n\times n}$, $B\in M_{m\times n}$, $C\in M_{n\times m}$, with $A,D$ invertible, we have
			\begin{equation*}
				\left|A\right|\cdot\left|D-CA^{-1}B\right|=\left|\begin{bmatrix}
					A & B \\
					
					C & D \\
					
				\end{bmatrix}
				\right|=\left|D\right|\cdot\left|A-BD^{-1}C\right|.
			\end{equation*}
			We see the determinant of the matrix in the bracket is equal to LHS of (\ref{w}). Then the classification above follows. 
		\end{proof}
		
		Now we can get the following sharp uniform decay estimate, with a similar change of variables in \cite{8} and the Newton polyhedron method.
		\begin{thm}\label{uniform}
			For $d=3$, $\xi^{\ast}\in \mathbb{T}^{3}-\lbrace0\rbrace$, $v_{\xi^{\ast}}=\nabla\omega(\xi^{\ast})$, we have the uniform decay estimate as follows.
			\begin{itemize}
				\item (I):  if $\xi^{\ast}$ is nondegenerate, then $M(\Phi_{v_{\xi^{\ast}}},\xi^{\ast})\curlyeqprec(-\frac{3}{2},0).$
				\item (II): if $\xi^{\ast}\in \Gamma_{3}$, then $M(\Phi_{v_{\xi^{\ast}}},\xi^{\ast})\curlyeqprec(-\frac{7}{6},0).$
				\item (III): if $\xi^{\ast}\in \Gamma_{2}$, then $M(\Phi_{v_{\xi^{\ast}}},\xi^{\ast})\curlyeqprec(-\frac{5}{4},0).$
				\item (IV):  if $\xi^{\ast}\in \Gamma_{1}$, then $M(\Phi_{v_{\xi^{\ast}}},\xi^{\ast})\curlyeqprec(-\frac{4}{3},0).$
			\end{itemize}
		\end{thm}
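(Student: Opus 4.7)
The plan is to reduce the global oscillatory integral to a sum of local contributions near each degenerate point $\xi^*$ via the smooth partition of unity from Lemma \ref{partition}, and to analyze the Taylor expansion of the normalized phase $\tilde\phi(y) := \phi(v_{\xi^*}, \xi^* + y) - \phi(v_{\xi^*}, \xi^*)$ around $y = 0$ together with the smooth amplitude $1/\omega$. The Hessian of $\omega$ at $\xi^*$ has the form $H = -\omega(\xi^*)^{-3}\, u u^{T} + D$ with $u_i = \sin\xi_i^*$ and $D = \mathrm{diag}(\cos\xi_i^*/\omega(\xi^*))$, and the four cases are distinguished by the corank of $H$: corank $0$ in case (I), corank $1$ on $\Gamma_1$ (the determinant identity (\ref{w}) is exactly the vanishing of $\det H$ in this case) and on $\Gamma_2$, and corank $2$ on $\Gamma_3$.

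After a linear change of coordinates that places $H$ in normal form, I would apply classical stationary phase in the nondegenerate directions and analyze the reduced integral in the remaining degenerate variables via the Newton polyhedron. Case (I) is immediate: three-dimensional stationary phase yields $t^{-3/2}$. For (IV), $\xi^* \in \Gamma_1$, partial stationary phase in the two nondegenerate directions contributes $t^{-1}$; the remaining one-dimensional phase along the null direction $\nu_i = \omega\tan\xi_i^*$ has nonzero cubic coefficient (computed from the third directional derivative of $\omega$ along $\nu$), giving an extra $t^{-1/3}$ and total $t^{-4/3}$. For (III), $\xi^* \in \Gamma_2$ with say $\xi_1^* = \xi_2^* = \pi/2$, the identity $2 - 2\cos(\pi/2 + y_j) = 2 + 2\sin y_j$ combined with $\sin y_1 + \sin y_2 = 2\sin(s/2)\cos(u/2)$ (where $s = y_1 + y_2$, $u = y_1 - y_2$) shows that $\tilde\phi$ contains no pure monomial in $u$; its Newton polyhedron in $(s, u, y_3)$ then has principal face $2s + u + 2y_3 = 4$, Newton distance $4/5$, and decay $t^{-5/4}$. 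For (II), $\xi^* \in \Gamma_3$, the change $s = y_1 + y_2 + y_3$ together with two orthogonal variables $u, v$ puts $\tilde\phi$ in the form $c_0 s^2 + c_1 u^2 v + c_2 v^3 + (\text{higher order})$; its Newton polyhedron has principal face through $(2, 0, 0)$, $(0, 2, 1)$, $(0, 0, 3)$, Newton distance $6/7$, and decay $t^{-7/6}$.

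The main obstacle I anticipate is verifying the $\mathbb{R}$-nondegeneracy of the principal face polynomials in (II) and (III): for example $c_0 s^2 + c_1 u^2 v + c_2 v^3$ vanishes along a two-dimensional real algebraic subvariety meeting $(\mathbb{R}^*)^3$, so the direct Newton polyhedron decay theorem from the Appendix does not apply. I would handle this by a further smooth partition of unity isolating neighborhoods of the zero locus of each face polynomial and, on each such piece, by an explicit weighted rescaling or one-variable blow-up that either restores $\mathbb{R}$-nondegeneracy or reduces the problem to a lower-dimensional oscillatory integral amenable to classical stationary phase. A secondary routine point is to verify that the leading coefficients identified above remain nonzero uniformly over the one-parameter family of critical points inside $\Gamma_2$ and over the codimension-one surface $\Gamma_1$, which follows from the explicit formulas together with a compactness argument.
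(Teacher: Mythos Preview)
Your overall strategy—localize, Taylor-expand the phase, classify by corank of the Hessian, and read off the decay from a Newton polyhedron—is correct, and your corank assignments and final decay exponents match the paper's.  The paper, however, takes a different and cleaner route to the reduction step, and this route is exactly what dissolves the obstacle you flag at the end.

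Rather than computing a three-variable Newton polyhedron and then worrying about $\mathbb{R}$-nondegeneracy of the face polynomial, the paper uses the abstract splitting lemmas from the Appendix: after a tailored linear change $\xi=A\zeta+\xi^{\ast}$ it writes the phase as $Q(\zeta)+R(\zeta)$ with $Q\in\mathcal{E}_{\gamma,3}$ and $R\in H_{\gamma,3}$ for a suitable weight $\gamma$, invokes Lemma~\ref{we} to discard $R$, and then applies Lemma~\ref{Q} to peel off each nondegenerate quadratic variable, contributing a factor $(-\tfrac12,0)$ each time.  What remains is always at most a \emph{two}-variable problem, and the two-dimensional Newton polyhedron theory (Theorem~\ref{uniform estimate} and Lemma~\ref{-2/3}) applies directly: in case (II) the residual phase is $b\zeta_{1}^{2}\zeta_{2}+c\zeta_{1}\zeta_{2}^{2}$, giving $(-\tfrac23,0)$; in case (III) it is $a\zeta_{2}^{2}+d\zeta_{1}^{2}\zeta_{2}+\lambda\zeta_{1}^{4}$, giving $(-\tfrac34,0)$; in case (IV) it is $d\zeta_{1}^{3}$, giving $(-\tfrac13,0)$.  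Since the uniform-estimate theorem in the Appendix is stated only for $d=2$, this reduction is not merely a convenience but is what makes the cited tools applicable; your proposed fix via blow-ups and additional partitions of unity would work in principle but is considerably more labor, and the three-variable $\mathbb{R}$-nondegeneracy verification you anticipate is simply never needed.  (Incidentally, your worry in case (II) is slightly misstated: $\mathbb{R}$-nondegeneracy concerns the vanishing of $\nabla S_{\pi}$ on $(\mathbb{R}\setminus\{0\})^{d}$, not of $S_{\pi}$ itself, and for $c_{0}s^{2}+c_{1}u^{2}v+c_{2}v^{3}$ the gradient in fact vanishes only at the origin.)
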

		\begin{proof}
			For (I), considering the change of variables $\xi:=A\zeta+\xi^{\ast}$, where $A$ satisfies
			\begin{equation*}
				A^{T}\cdot Hess_\xi \;\omega(\xi^{\ast})\cdot A= diag\lbrace c_{1},c_{2},c_{3}\rbrace, \; c_{i}\ne 0, \forall i=1,2,3.	 		
			\end{equation*}
			Then, the Taylor series of $\Phi_{v_{\xi^{\ast}}}$ at $0$ is  
			\begin{equation*}
				\Phi_{v_{\xi^{\ast}}}(A\zeta+\xi^{\ast})=\left(v_{\xi^{\ast}}\cdot\xi^{\ast}-\omega(\xi^{\ast})\right)+\left(c_{1}\zeta_{1}^{2}+c_{2}\zeta_{2}^{2}+c_{3}\zeta_{3}^{2}\right)+O(|\zeta|^{3})
			\end{equation*}
			\begin{equation*}
				:=r_{\ast}+Q(\zeta)+O(|\zeta|^{3}).
			\end{equation*}
			Let $\gamma:=(\frac{1}{2},\frac{1}{2},\frac{1}{2})$, we see $Q(\zeta)\in \mathcal{E}_{\gamma,3}$, $O(|\zeta|^{3})\in H_{\gamma,3}$. Using Lemma \ref{we} and Lemma \ref{Q} in the Appendix, we derive that 
			\begin{equation*}
				M(\Phi_{v_{\xi^{\ast}}},\xi^{\ast})\curlyeqprec M(c_{1}\zeta_{1}^{2}+c_{2}\zeta_{2}^{2}+c_{3}\zeta_{3}^{2})
			\end{equation*}
			\begin{equation*}
				\curlyeqprec(-\frac{1}{2},0)+(-\frac{1}{2},0)+(-\frac{1}{2},0)\curlyeqprec(-\frac{3}{2},0).
			\end{equation*}
			
			For (II), without loss of generality, we suppose that $\xi^{\ast}$=$(\frac{\pi}{2},\frac{\pi}{2},\frac{\pi}{2})$. Considering the change of variables $\xi:=A\zeta+\xi^{\ast}$, where $A$ is defined as follows.
			\begin{equation*}
				A=
				\begin{bmatrix}
					1&0&-1\\
					0&1&-1\\
					0&0&1\\
				\end{bmatrix}.
			\end{equation*}
			Then Taylor series of $\Phi_{v_{\xi^{\ast}}}$ is 
			\begin{equation*}
				\Phi_{v_{\xi^{\ast}}}(A\zeta+\xi^{\ast})=\left(v_{\xi^{\ast}}\cdot\xi^{\ast}-\omega(\xi^{\ast})\right)+ (a\zeta_{3}^{2}+b\zeta_{1}^{2}\zeta_{2}+c\zeta_{1}\zeta_{2}^{2})+R(\zeta)
			\end{equation*}
			\begin{equation*}
				:=r_{\ast}+Q(\zeta)+R(\zeta),
			\end{equation*}
			where $R(\zeta)$ has no $\zeta_{1}^{2}, \zeta_{2}^{2}$-term, and $a,b,c\ne0$. Let $\gamma:=(\frac{1}{3},\frac{1}{3},\frac{1}{2})$, we have $Q(\zeta)\in \mathcal{E}_{\gamma,3}$, $R(\zeta)\in H_{\gamma,3}$. Using Lemma \ref{we}, Lemma \ref{Q} and Lemma \ref{-2/3}, we derive that
			\begin{equation*}
				M(\Phi_{v_{\xi^{\ast}}},\xi^{\ast})\curlyeqprec M(a\zeta_{3}^{2}+b\zeta_{1}^{2}\zeta_{2}+c\zeta_{1}\zeta_{2}^{2})
			\end{equation*}
			\begin{equation*}
				\curlyeqprec(-\frac{1}{2},0)+M(b\zeta_{1}^{2}\zeta_{2}+c\zeta_{1}\zeta_{2}^{2})\curlyeqprec(-\frac{1}{2},0)+(-\frac{2}{3},0)\curlyeqprec(-\frac{7}{6},0).
			\end{equation*}
			
			For (III), we can suppose that $\xi^{\ast}=(\frac{\pi}{2},\frac{\pi}{2},\xi_{0})$, where $\xi_{0}\ne \frac{\pi}{2}$.
			Considering the change of variables $\xi:=A\zeta+\xi^{\ast}$, where $A$ is defined as follows.
			\begin{equation*}
				A=
				\begin{bmatrix}
					1&0&0\\
					-1&1&0\\
					0&0&1\\
				\end{bmatrix}.
			\end{equation*}
			Then we have the following Taylor series.
			\begin{equation*}
				\Phi_{v_{\xi^{\ast}}}(A\zeta+\xi^{\ast})=\left(v_{\xi^{\ast}}\cdot\xi^{\ast}-\omega(\xi^{\ast})\right)+ (a\zeta_{2}^{2}+b\zeta_{2}\zeta_{3}+c\zeta_{3}^{2}+d\zeta_{1}^{2}\zeta_{2}+\lambda\zeta_{1}^{4})+R(\zeta)
			\end{equation*}
			\begin{equation*}
				:=r_{\ast}+Q(\zeta)+R(\zeta),
			\end{equation*}
			where $R(\zeta)$ has no $\zeta_{1}^{2},\zeta_{1}^{3}$-term, with $d\ne0$, $b^{2}\ne 4ac$ ($\lambda$ is uncertain). Then we rotate $\zeta_{2}\zeta_{3}$-plane appropriately to cancel $\zeta_{2}\zeta_{3}$-term. Let $\gamma:=(\frac{1}{4},\frac{1}{2},\frac{1}{2})$, we have $Q(\zeta)\in \mathcal{E}_{\gamma,3}$, $R(\zeta)\in H_{\gamma,3}$. Using lemmas in the Appendix again, we deduce that
			\begin{equation*}
				M(\Phi_{v_{\xi^{\ast}}},\xi^{\ast})\curlyeqprec M(a\zeta_{2}^{2}+c\zeta_{3}^{2}+d\zeta_{1}^{2}\zeta_{2}+\lambda\zeta_{1}^{4})
			\end{equation*}
			\begin{equation*}
				\curlyeqprec(-\frac{1}{2},0)+M(a\zeta_{2}^{2}+d\zeta_{1}^{2}\zeta_{2}+\lambda \zeta_{1}^{4})\curlyeqprec(-\frac{1}{2},0)+(-\frac{3}{4},0)\curlyeqprec(-\frac{5}{4},0).
			\end{equation*}
			
			For (IV), we can also suppose that $\xi^{\ast}=(\xi_{1}^{\ast},\xi_{2}^{\ast},\xi_{3}^{\ast})$. Considering the change of variables $\xi:=A\zeta+\xi^{\ast}$, where $A$ is defined as follows.
			\begin{equation*}
				A=
				\begin{bmatrix}
					\tan(\xi_{1}^{\ast})&-\tan(\xi_{2}^{\ast})&-\tan(\xi_{3}^{\ast})\\
					\tan(\xi_{2}^{\ast})&\tan(\xi_{1}^{\ast})&0\\
					\tan(\xi_{3}^{\ast})&0&\tan(\xi_{1}^{\ast})\\
				\end{bmatrix}.
			\end{equation*}
			Similarly, we have 
			\begin{equation*}
				\Phi_{v_{\xi^{\ast}}}(A\zeta+\xi^{\ast})=\left(v_{\xi^{\ast}}\cdot\xi^{\ast}-\omega(\xi^{\ast})\right)+ (a\zeta_{2}^{2}+b\zeta_{2}\zeta_{3}+c\zeta_{3}^{2}+d\zeta_{1}^{3})+R(\zeta)
			\end{equation*}
			\begin{equation*}
				:=r_{\ast}+Q(\zeta)+R(\zeta),
			\end{equation*}
			where $R(\zeta)$ contains no $\zeta_{1}^{2}$-term, with $d\ne0$, $b^{2}\ne 4ac$. Parallelly, we can rotate $\zeta_{2}\zeta_{3}$-plane, and cancel $\zeta_{2}\zeta_{3}$-term. Let $\gamma:=(\frac{1}{3},\frac{1}{2},\frac{1}{2})$, we have $Q(\zeta)\in \mathcal{E}_{\gamma,3}$, $R(\zeta)\in H_{\gamma,3}$. Then we see
			\begin{equation*}
				M(\Phi_{v_{\xi^{\ast}}},\xi^{\ast})\curlyeqprec M(a\zeta_{2}^{2}+c\zeta_{3}^{2}+d\zeta_{1}^{3})
			\end{equation*}
			\begin{equation*}
				\curlyeqprec(-\frac{1}{2},0)+(-\frac{1}{2},0)+(-\frac{1}{3},0)\curlyeqprec(-\frac{4}{3},0).
			\end{equation*}
		\end{proof}
		\begin{rem}
			In the proof of Theorem \ref{uniform}, we omit a rather complicate detail. For (IV), the statement ``$d\ne0$" is not obvious. We first need the following conclusion in \cite{17} to simplify the whole problem.
			\begin{itemize}
				\item For $m\ge2$,令$f\in C^{m+1}(U)$, $\xi_{0}\in U\subseteq \mathbb{R}^{d}$. If the Hessian $Hess_{\xi} \; f(\xi_{0})$ has rank $d-1$, $v_{d}$ is its null vector, and $\left(v_{d}\cdot\nabla_{\xi}\right)^{j}f(\xi_{0})=0$, $\forall 2\le j\le m$. Then  $\left(v_{d}\cdot\nabla_{\xi}\right)^{m+1}f(\xi_{0})=0$ is equivalent to $\left(v_{d}\cdot\nabla_{\xi}\right)^{m-1}\det Hess_{\xi}\; f(\xi_{0})=0$.
			\end{itemize}
			Based on this conclusion, we just need to show $\left(v_{d}\cdot\nabla_{\xi}\right)\det Hess_{\xi}\;\omega(\xi^{\ast})\ne0$, where $v_{d}=\left(\tan(\xi_{1}^{\ast}),\tan(\xi_{2}^{\ast}),\tan(\xi_{3}^{\ast})\right)$. Recall the calculation in Lemma \ref{det}, we have
			\begin{equation*}
				\det Hess_{\xi}\;\omega(\xi)=\frac{1}{\omega(\xi)^{5}}\left(\prod_{j=1}^{3}\cos(\xi_{j})\right)\left[\omega(\xi)^{2}-\sum_{j=1}^{3}\sin(\xi_{j})\cos(\xi_{j})\right]
			\end{equation*}
			\begin{equation*}
				:=J_{1}\cdot J_{2}\cdot J_{3}.
			\end{equation*}
			As $\xi^{\ast}\in\Gamma_{1}$, then we have $\sum_{j=1}^{3}\sec(\xi_{j}^{\ast})+\cos(\xi_{j}^{\ast})=6$. Simple calculation shows that the value of $J_{3}$ at $\xi^{\ast}$ is $0$, so we just need to calculate the derivatives on $J_{3}$. Then we have
			\begin{equation*}
				\left(v_{d}\cdot\nabla_{\xi}\right)\det Hess_{\xi} \; \omega(\xi^{\ast})\ne0 \iff \sum_{j=1}^{3}\sin(\xi_{j}^{\ast})\tan^{3}(\xi_{j}^{\ast})\ne 0.
			\end{equation*}
			Intuitively, from the graph \cite{23}, we see $\xi=0$ is the only solution of the following equation.
			\begin{equation*}
				\left\{
				\begin{aligned}
					&  \sum_{j=1}^{3}\sin(\xi_{j})\tan^{3}(\xi_{j})= 0, \\
					&  \sum_{j=1}^{3}\sec{(\xi_{j})}+\cos(\xi_{j})=6.
				\end{aligned}
				\right.
			\end{equation*}
			However, the rigorous proof is rather involved.
		\end{rem}
		With Theorem \ref{uniform} above, we can reprove the uniform decay estimate of 3-dimensional DW. As $\frac{1}{\omega}$ has singularity at $0$, we take the cutoff $\chi\in C_{c}^{\infty}(\mathbb{R}^{3})$, such that $\chi=1$ in a small neighborhood of $0$. From (\ref{Rd}), we have
		\begin{equation}\label{part}
			I(v,t)=\frac{1}{(2\pi)^{d}}\int_{\mathbb{R}^d}e^{it\phi(v,\xi)}\frac{\eta(\xi)\chi(\xi)}{\omega(\xi)}d\xi+\frac{1}{(2\pi)^{d}}\int_{\mathbb{R}^d}e^{it\phi(v,\xi)}\frac{\eta(\xi)(1-\chi(\xi))}{\omega(\xi)}d\xi
		\end{equation}
		\begin{equation*}
			:=I_{1}(v,t)+I_{2}(v,t).
		\end{equation*}
		Using the estimate in \cite{1}, we have 
		\begin{equation}\label{d-1}
			\sup_{v\in \mathbb{R}^{d}}|I_{1}(v,t)|\le C(1+|t|)^{-(d-1)}, \quad \forall t\in \mathbb{R}.
		\end{equation}
		Based on Theorem \ref{uniform}, for any $\xi^{*}\in supp \; \eta(1-\chi):=U$, there exists a neighborhood $U_{\xi^{\ast}}\ni\xi^{\ast}$, $V_{\xi^{\ast}}\ni v_{\xi^{\ast}}=\nabla\omega(\xi^{\ast})$, such that for any $\zeta\in C_{c}^{\infty}(U_{\xi^{\ast}})$, there exists $C=C(\xi^{\ast},\zeta)>0$, with
		\begin{equation*}
			\sup_{v\in V_{\xi^{\ast}}}\left|\int_{\mathbb{R}^d}e^{it\phi(v,\xi)}\zeta(\xi)d\xi\right|\le C(1+|t|)^{-\beta}, \quad \forall t\in \mathbb{R},
		\end{equation*}
		where $\beta\in\lbrace\frac{3}{2},\frac{7}{6},\frac{5}{4},\frac{4}{3}\rbrace$.
		If $v\in V_{\xi^{\ast}}^{c}$, then $|\nabla\phi(v,\xi)|$ has strict lower bound. From Lemma \ref{j}, we have the following uniform decay estimate:
		\begin{equation*}
			\sup_{v\in \mathbb{R}^{3}}\left|\int_{\mathbb{R}^d}e^{it\phi(v,\xi)}\zeta(\xi)d\xi\right|\le C(1+|t|)^{-\beta}, \quad \forall t\in \mathbb{R}.
		\end{equation*}
		As $\lbrace U_{\xi^{\ast}}\rbrace_{\xi^{\ast}\in U}$ form a open covering of $U$, there exists a finite sub-covering $\lbrace U_{\xi_{j}^{\ast}}\rbrace_{j=1}^{N}$, and a corresponding partition of unity:
		\begin{equation*}
			\sum_{j=1}^{N}\theta_{j}\equiv1,\quad \theta_{j}\in C_{c}^{\infty}(U_{\xi_{j}^{\ast}}).
		\end{equation*}
		Then we can deduce that
		\begin{equation*}
			|I_{2}(v,t)|\le\sum_{j=1}^{N}\frac{1}{(2\pi)^{d}}\left|\int_{\mathbb{R}^d}e^{it\phi(v,\xi)}\frac{\eta(\xi)(1-\chi(\xi))}{\omega(\xi)}\theta_{j}(\xi)d\xi\right|\lesssim (1+|t|)^{-\frac{7}{6}}.
		\end{equation*}
		Thus we derive the uniform decay estimate of DW in 3 dimension, with the Newton polyhedron method.
		\section{Some abstract frameworks of the nonlinear dispersive equations}
		In this section, we will introduce two abstract frameworks (Z and A) and the corresponding results in \cite{4}.

		\subsection{Abstract framework-Z}
		In the abstract framework-Z, we have the following assumptions on the equation (\ref{abstract}):
		\begin{itemize}
			\item (I) $X$ is a Hilbert space, with norm $|\cdot|_{2}$. $\lbrace U_{0}(t)\rbrace_{t\in \mathbb{R}}$ is an one-parameter unitary operator group on $X$.
			\item (II) $X_{1},X_{3}$ are Banach spaces, with norms $|\cdot|_{1}$, $|\cdot|_{3}$. And $P0=0$, maps a neighborhood of $0$ in $X_{3}$ to $X_{1}$, satisfying 
			\begin{equation*}
				|Pf-Pg|_{1}\le C(|f|_{3}+|g|_{3})^{p-1}|f-g|_{3}, \quad p>1.
			\end{equation*}
			where $C$ is a positive constant.
			\item (III) $X_{1},X,X_{3}$ can be embedded into a Banach space $X_{4}$, and $X\cap X_{3}$ is dense in $X,X_{3}$. We also define $Z=L^{p+1}(\mathbb{R};X_{3})\cap B(\mathbb{R};X_{3})$, which is the reason why this framework is called ``Z''.
			\item (IV) For any $f\in X$, $t\to U_{0}(t)f$ belongs to $L^{p+1}(\mathbb{R};X_{3})$. And $X$ can be embedded into $X_{3}$.
			\item (V)  $U_{0}(t)$, restricted to $X\cap X_{1}$, has an extension to $X_{1}$, which maps to $X_{3}$, with the decay estimate below.
			\begin{equation*}
				|U_{0}(t)f|_{3}\le \frac{C}{|t|^{\alpha}}|f|_{1}, \quad \forall f\in X_{1}, t\ne 0,
			\end{equation*}
			where $C$ is a positive constant, $\alpha=\frac{2}{p+1}$. Besides, $U_{0}(t)$, restricted to $X\cap X_{3}$, also has an extension to $X_{3}$, which maps to $X_{4}$ and retains the group operations, i.e.
			\begin{equation*}
				U_{0}(t)U_{0}(s)f=U_{0}(t+s)f, \quad \forall f\in X_{1}.
			\end{equation*}
			\item (VI)  $G$ is a functional, which maps a neighborhood of $0$ in $X_{3}$ to $\mathbb{R}$. Besides, $G$ is lower semi-continuous and continuous at $0$.
			\item (VII) For any time interval $I$, $s\in I$, $f\in X$, if $u\in Z$, $\|u\|_{Z}$ is sufficiently small, and satisfies ($\ast_{s},f$) in $I$, then we have $u\in C(I;X)$, with the energy conservation
			\begin{equation}\label{independent}
				\frac{1}{2}|u(t)|_{2}^{2}+G(u(t)) \; \text{is independent with $t$.}
			\end{equation}
		\end{itemize}
		Then there are some scattering results of the equation (\ref{abstract}):
		\begin{thm}\label{scattering}
			We suppose the assumptions (I)$\sim$(VII), then there exists $\delta>0$, such that if $f_{-}\in X$, $|f_{-}|_{2}<\delta$, we have a unique $u$ satisfying the equation ($\ast_{-\infty},f_{-}$), i.e.
			\begin{equation*}
				u(t)=U_{0}(t)f_{\pm}+\int_{\pm\infty}^{t}U_{0}(t-s)Pu(s)ds , \quad \forall t\in \mathbb{R},
			\end{equation*}
			with $u\in C(\mathbb{R};X)\cap L^{p+1}(\mathbb{R};X_{3})$. Furthermore, we have
			\begin{equation}\label{asy}
				|u(t)-U_{0}(t)f_{-}|_{2}\to 0, \quad t\to-\infty,
			\end{equation}
			and $f_{+}\in X$, such that $u$ satisfies the equation ($\ast_{+\infty},f_{+}$),
			\begin{equation*}
				|u(t)-U_{0}(t)f_{+}|_{2}\to 0, \quad t\to+\infty,
			\end{equation*}
			and the energy conservation
			\begin{equation*}
				\frac{1}{2}|u(t)|_{2}^{2}+G(u(t))=\frac{1}{2}|f_{-}|_{2}^{2}=\frac{1}{2}|f_{+}|_{2}^{2}.
			\end{equation*}
		\end{thm}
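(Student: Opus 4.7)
The plan is to recast the integral equation $(\ast_{-\infty},f_-)$ as a fixed-point problem in $Z=L^{p+1}(\mathbb{R};X_3)\cap B(\mathbb{R};X_3)$, solve it by Banach contraction under smallness of $|f_-|_2$, and then promote the resulting $u$ to $C(\mathbb{R};X)$ via assumption (VII), extracting scattering data $f_\pm$ at both temporal infinities.

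First I would define $Tu(t):=U_0(t)f_-+\int_{-\infty}^{t}U_0(t-s)Pu(s)\,ds$ and show that $T$ maps a small ball $B_r\subset Z$ into itself and is Lipschitz with small constant there. The linear term $U_0(\cdot)f_-$ sits in $Z$: its $B(\mathbb{R};X_3)$ norm is bounded by $|f_-|_2$ via the embedding $X\hookrightarrow X_3$ from (IV) together with unitarity on $X$, and its $L^{p+1}(\mathbb{R};X_3)$ norm is bounded by $|f_-|_2$ by (IV) combined with a closed-graph / uniform-boundedness argument (or equivalently a $TT^\ast$/Strichartz bound distilled from (V)). For the Duhamel term, (V) composed with the nonlinear estimate (II) gives pointwise
\begin{equation*}
|U_0(t-s)Pu(s)|_3\lesssim |t-s|^{-\alpha}|u(s)|_3^{p},\qquad \alpha=\tfrac{2}{p+1}.
\end{equation*}
The value $\alpha=2/(p+1)$ is chosen precisely so that the one-dimensional Hardy--Littlewood--Sobolev inequality yields
\begin{equation*}
\|Tu-U_0(\cdot)f_-\|_{L^{p+1}(\mathbb{R};X_3)}\lesssim \|u\|_{L^{p+1}(\mathbb{R};X_3)}^{p},
\end{equation*}
while the $B(\mathbb{R};X_3)$ component is handled by a H\"older/Young splitting of the same convolution. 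Inserting the Lipschitz form of (II) upgrades these to a contraction on $B_r$ with $r\sim|f_-|_2$ small, producing a unique fixed point $u\in Z$ solving $(\ast_{-\infty},f_-)$.

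The next stage invokes (VII): because $\|u\|_Z$ is small, $u\in C(\mathbb{R};X)$ and $\tfrac12|u(t)|_2^2+G(u(t))$ is independent of $t$. The asymptotic $|u(t)-U_0(t)f_-|_2\to 0$ as $t\to-\infty$ is obtained by applying $U_0(-t)$ and estimating
\begin{equation*}
\left|\int_{a}^{b} U_0(-s)Pu(s)\,ds\right|_2\lesssim \|u\|_{L^{p+1}((a,b);X_3)}^{p},
\end{equation*}
a bound coming from the $TT^\ast$ dual of (V) combined with (II); the right-hand side tends to $0$ as $a,b\to-\infty$. Running the same Cauchy argument on $[t_0,+\infty)$ defines $f_+:=\lim_{t\to+\infty}U_0(-t)u(t)\in X$, furnishes $(\ast_{+\infty},f_+)$, and makes $f_+$ unique by unitarity. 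For the energy identity, the integrability $u\in L^{p+1}(\mathbb{R};X_3)\cap B(\mathbb{R};X_3)$ produces sequences $t_n\to\pm\infty$ with $|u(t_n)|_3\to 0$; continuity of $G$ at $0$ gives $G(u(t_n))\to 0$, and unitarity together with the scattering asymptotics gives $|u(t_n)|_2\to|f_\pm|_2$, so the conserved quantity equals $\tfrac12|f_-|_2^2=\tfrac12|f_+|_2^2$.

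The main obstacle I anticipate is setting up and applying the dual (``$TT^\ast$'') bound needed in the scattering step: (V) is stated as a forward dispersive estimate $X_1\to X_3$, but deducing convergence of $U_0(-t)u(t)$ in the Hilbert norm $|\cdot|_2$ requires a backward endpoint whose justification leans on the density clauses in (III) and the compatibility of the various extensions of $U_0(t)$ posited in (V). A secondary subtlety is ensuring that the $B(\mathbb{R};X_3)$ component of the $Z$-norm inherits the contraction constant from the $L^{p+1}(\mathbb{R};X_3)$ component, so that smallness is genuinely propagated by $T$ in both norms at once.
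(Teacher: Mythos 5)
The paper itself contains no proof of Theorem \ref{scattering}: it is imported as a quoted result of the abstract framework in \cite{4}, so there is no in-paper argument to compare against. Your outline reconstructs the standard proof of that cited result correctly --- contraction in $Z$ for the map $Tu(t)=U_{0}(t)f_{-}+\int_{-\infty}^{t}U_{0}(t-s)Pu(s)\,ds$ using (II), (V) and one-dimensional Hardy--Littlewood--Sobolev with the exponent $\alpha=\tfrac{2}{p+1}$ (plus the near/far splitting for the $B(\mathbb{R};X_{3})$ component and the closed-graph bound $\|U_{0}(\cdot)f\|_{L^{p+1}(\mathbb{R};X_{3})}\lesssim|f|_{2}$ from (IV)), then (VII) for $u\in C(\mathbb{R};X)$ and conservation, then the dual estimate $\bigl|\int_{a}^{b}U_{0}(-s)Pu(s)\,ds\bigr|_{2}\lesssim\|u\|_{L^{p+1}((a,b);X_{3})}^{p}$ to produce $f_{+}$ and the limits of the energy --- and the only point needing care, the pairing between $X_{1}$ and $X_{3}$ that legitimizes that dual estimate, is part of the hypotheses in \cite{4} rather than of the abridged (I)$\sim$(VII) reproduced here, which is exactly the subtlety you flagged.
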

		\begin{rem}
			The above theorem establishes the existence of the wave operator and the scattering operator for the equation (\ref{abstract}) with small initial data. 
		\end{rem}
		\begin{thm}\label{theorem3}
			We suppose the assumptions (I)$\sim$(VII), and $f_{-}\in X$, then there exists $T>-\infty$, and a unique solution $u\in C((-\infty,T];X)\cap L^{p+1}((-\infty,T];X_{3})$, satisfying the equation ($\ast_{-\infty},f_{-}$) in $(-\infty,T]$, such that we have (\ref{asy}) and 
			\begin{equation}\label{f}
				\frac{1}{2}|u(t)|_{2}^{2}+G(u(t))=\frac{1}{2}|f_{-}|_{2}^{2}.
			\end{equation}
		\end{thm}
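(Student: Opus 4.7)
The approach is to adapt the proof of Theorem \ref{scattering}, replacing the smallness hypothesis $|f_-|_2 < \delta$ by a restriction to a sufficiently remote past interval $(-\infty, T]$. The key observation is that by assumption (IV), $U_0(\cdot)f_- \in L^{p+1}(\mathbb{R}; X_3)$, so absolute continuity of the Lebesgue integral gives
\[
\|U_0(\cdot) f_-\|_{L^{p+1}((-\infty, T]; X_3)} \to 0 \quad \text{as} \quad T \to -\infty.
\]
For $T$ chosen negative enough, this norm can be made as small as the argument needs, and plays exactly the role that $|f_-|_2 < \delta$ played in Theorem \ref{scattering}.

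The first step is a Picard iteration on $Z_T := L^{p+1}((-\infty, T]; X_3) \cap B((-\infty, T]; X_3)$ for the map $\Phi u(t) := U_0(t) f_- + \int_{-\infty}^t U_0(t-s) P u(s)\,ds$. The pointwise decay $|U_0(t)f|_3 \le C|t|^{-\alpha}|f|_1$ with $\alpha = 2/(p+1)$ from (V), combined with the Hardy--Littlewood--Sobolev inequality, yields a Strichartz-type estimate for the Duhamel term which, together with the Lipschitz bound on $P$ from (II), gives
\[
\|\Phi u - \Phi v\|_{Z_T} \le C_1(\|u\|_{Z_T} + \|v\|_{Z_T})^{p-1}\|u-v\|_{Z_T}.
\]
Choosing $T$ negative enough so that $\|U_0(\cdot) f_-\|_{L^{p+1}((-\infty,T];X_3)}$ is small, $\Phi$ maps a small closed ball in $Z_T$ into itself contractively, producing a unique fixed point $u$.

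Shrinking $T$ further if needed so that $\|u\|_{Z_T}$ lies below the smallness threshold of (VII), one lifts $u$ to $C((-\infty, T]; X)$ with $\tfrac12 |u(t)|_2^2 + G(u(t))$ constant on $(-\infty, T]$. The asymptotic statement (\ref{asy}) follows from $u(t) - U_0(t)f_- = \int_{-\infty}^t U_0(t-s)Pu(s)\,ds$ and the dual Strichartz estimate (the adjoint of the direct one, using unitarity of $U_0$ on $X$), which yields $|u(t) - U_0(t)f_-|_2 \lesssim \|u\|_{L^{p+1}((-\infty,t];X_3)}^p \to 0$ as $t \to -\infty$. The conserved value $\tfrac12|f_-|_2^2$ is then identified by extracting a subsequence $t_n \to -\infty$ with $|u(t_n)|_3 \to 0$ (possible since $u \in L^{p+1}((-\infty, T]; X_3)$), applying continuity of $G$ at $0$ from (VI), and using $|u(t_n)|_2 \to |f_-|_2$ from (\ref{asy}) and unitarity.

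The main obstacle, more technical than deep, is coordinating the choice of $T$: it must close the contraction, meet the smallness threshold of (VII), and be compatible with promoting the $Z_T$-solution to $X$-regularity. All three requirements are monotone in $\|U_0(\cdot)f_-\|_{L^{p+1}((-\infty,T];X_3)}$, so a single sufficiently negative $T$ handles them simultaneously. Uniqueness of $u$ in the stated class on $(-\infty, T]$ is then immediate from the contraction fixed-point theorem.
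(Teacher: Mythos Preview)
The paper does not actually prove this theorem; it is stated as a result imported from \cite{4} (Strauss's abstract framework), so there is no ``paper's own proof'' to compare against. Your outline is the standard argument and is essentially what \cite{4} contains: exploit $\|U_0(\cdot)f_-\|_{L^{p+1}((-\infty,T];X_3)} \to 0$ from (IV), run the contraction on a remote past interval, lift to $C((-\infty,T];X)$, and identify the conserved energy via a subsequence along which $|u(t_n)|_3 \to 0$.

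Two small imprecisions are worth tightening. First, the contraction should be run in $L^{p+1}((-\infty,T];X_3)$ alone, not in $Z_T = L^{p+1} \cap B$: the $B$-component of $\|U_0(\cdot)f_-\|_{Z_T}$ need not become small as $T \to -\infty$ (assumption (IV) gives no pointwise decay of $|U_0(t)f_-|_3$), so the ball you contract on should be in the $L^{p+1}$ norm, with membership in $B((-\infty,T];X_3)$ recovered afterward from $X \hookrightarrow X_3$ once $u \in C((-\infty,T];X)$ is established. Second, assumption (VII) is stated for equations $(\ast_s,f)$ with finite $s \in I$, not for $(\ast_{-\infty},f_-)$ directly; you should first obtain $u \in C((-\infty,T];X)$ from the dual Strichartz bound on the Duhamel term (as you indicate), and then for each $t_0 < T$ observe that $u$ solves $(\ast_{t_0}, U_0(-t_0)u(t_0))$ on $[t_0,T]$, invoke (VII) there, and let $t_0 \to -\infty$. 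These are bookkeeping issues rather than genuine gaps; the strategy is correct.
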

		\begin{rem}
			This theorem guarantees that if $f_{-}$ is not sufficiently small, we can also establish the well-posedness on $(-\infty,T]$, retaining the energy conservation.
		\end{rem}
		\begin{thm}\label{wee}
			We also suppose the assumptions (I)$\sim$(VII), and $f_{0}\in X$, $|f_{0}|_{2}<\delta$, then there exists a unique strong solution $u$ of the equation (\ref{abstract}) in $\mathbb{R}$ with the initial data. And $u$ also satisfies the energy conservation (\ref{independent}), and has a unique $f_{\pm}\in X$, holding all assertions in Theorem \ref{scattering}. Furthermore, map $f_{0}\mapsto f_{\pm}$ is a continuous bijection, mapping $B_{\delta}(0)$ in $X$ to $B_{2\delta}(0)$.
		\end{thm}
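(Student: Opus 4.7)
The plan is to prove global well-posedness for $(\ast_0, f_0)$ via a contraction argument in $Z = L^{p+1}(\mathbb{R}; X_3) \cap B(\mathbb{R}; X_3)$, extract the asymptotic states $f_\pm$ by passing to the limit in $U_0(-t)u(t)$, and then identify Theorem \ref{scattering} as furnishing the inverse of the map $f_0 \mapsto f_\pm$ to obtain bijectivity and continuity.

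For the first step, I consider
\begin{equation*}
\Phi(u)(t) := U_0(t)f_0 + \int_0^t U_0(t-s)\, Pu(s)\, ds
\end{equation*}
on a closed ball $B_\rho \subset Z$. By (IV) and the closed graph theorem, $\|U_0(\cdot)f_0\|_Z \le C|f_0|_2$. The dispersive bound (V), combined with the Hardy–Littlewood–Sobolev inequality (whose scaling matches exactly $\alpha = 2/(p+1)$) and the nonlinear estimate (II), yields
\begin{equation*}
\|\Phi(u)\|_Z \le C|f_0|_2 + C\|u\|_Z^p, \qquad \|\Phi(u) - \Phi(v)\|_Z \le C(\|u\|_Z + \|v\|_Z)^{p-1}\|u-v\|_Z.
\end{equation*}
Choosing $\rho$ small and then $\delta \le \rho/(2C)$ with $2C\rho^{p-1} < 1$, the map $\Phi$ is a contraction from $B_\rho$ into itself for every $f_0$ with $|f_0|_2 < \delta$, producing a unique fixed point $u \in Z$.

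For the second step, assumption (VII) upgrades $u$ to $C(\mathbb{R}; X)$ and delivers the energy conservation $\tfrac12|u(t)|_2^2 + G(u(t)) = \tfrac12|f_0|_2^2$. I then define
\begin{equation*}
f_\pm := f_0 + \int_0^{\pm\infty} U_0(-s)\, Pu(s)\, ds,
\end{equation*}
where the improper integrals converge in $X$ because the same HLS estimate, applied on $(T,\infty)$, bounds $\bigl|\int_T^\infty U_0(-s)Pu(s)\,ds\bigr|_2$ by a vanishing tail of $\|u\|_{L^{p+1}((T,\infty);X_3)}^p$; this same tail bound yields $|u(t) - U_0(t)f_\pm|_2 \to 0$ as $t \to \pm\infty$. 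The triangle inequality then gives $|f_\pm|_2 \le |f_0|_2 + C\rho^p \le 2\delta$ after possibly shrinking $\delta$, so the map $\Psi_\pm : f_0 \mapsto f_\pm$ sends $B_\delta(0) \subset X$ into $B_{2\delta}(0)$.

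For the final step, I apply Theorem \ref{scattering} to an asymptotic state $f_\pm \in B_\delta(0)$: this produces a unique strong solution $v \in C(\mathbb{R}; X) \cap L^{p+1}(\mathbb{R}; X_3)$ of $(\ast_{\pm\infty}, f_\pm)$. Since both $v$ and the $u$ constructed above solve the small-data Duhamel equation in $Z$ and share the same asymptotic state $f_\pm$, the uniqueness clause in Theorem \ref{scattering} forces $v = u$ and hence $v(0) = f_0$. This identifies Theorem \ref{scattering} as providing a two-sided inverse to $\Psi_\pm$, proving the bijection. Continuity follows by subtracting two instances of the fixed-point equation for initial data $f_0^{(1)}, f_0^{(2)}$ and applying exactly the same estimates:
\begin{equation*}
\|u^{(1)} - u^{(2)}\|_Z \le C|f_0^{(1)} - f_0^{(2)}|_2 + C\rho^{p-1}\|u^{(1)} - u^{(2)}\|_Z,
\end{equation*}
which gives Lipschitz dependence of $u$ on $f_0$ in $Z$, and then of $f_\pm$ on $f_0$ in $X$ through the Duhamel formula defining $f_\pm$. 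The main technical obstacle is a bookkeeping one: choosing a single $\delta$ so that all of the smallness conditions align simultaneously --- the forward contraction (requiring $|f_0|_2$ small), the inverse construction via Theorem \ref{scattering} (requiring $|f_\pm|_2$ small), and the range inclusion $\Psi_\pm(B_\delta) \subseteq B_{2\delta}$ --- while ensuring that the uniqueness identification $v = u$ takes place inside the same ball used for both contractions.
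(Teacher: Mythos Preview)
The paper does not supply its own proof of this theorem: Section~3 explicitly introduces the abstract frameworks and ``the corresponding results in \cite{4}'', and Theorem~\ref{wee} is simply stated (with a one-line remark) as a citation of that reference. There is therefore no in-paper argument to compare your proposal against.

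That said, your outline follows the standard route one expects for such a result and is essentially sound. One point deserves tightening: when you pass from the Duhamel tail to convergence in $X$, you invoke ``the same HLS estimate'' to bound $\bigl|\int_T^{\infty} U_0(-s)Pu(s)\,ds\bigr|_2$. The HLS/dispersive bound from (V) controls the $X_3$-norm of the Duhamel integral, not its $X$-norm. What you actually need here is the \emph{dual} of assumption~(IV): the bounded map $f\mapsto U_0(\cdot)f$ from $X$ to $L^{p+1}(\mathbb{R};X_3)$ dualises to a bounded map $L^{(p+1)'}(\mathbb{R};X_3^{*})\to X$, and combined with (II) (which places $Pu$ in $L^{(p+1)'}(I;X_1)$ with $X_1\hookrightarrow X_3^{*}$ in the intended applications) this gives the required $X$-convergence of the tail. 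Without making this precise, the step ``$f_\pm\in X$ with $|u(t)-U_0(t)f_\pm|_2\to 0$'' is asserted rather than proved. A second minor point: the bijectivity argument as written shows $\Psi_\pm$ is injective with a left inverse given by Theorem~\ref{scattering}, but to get surjectivity onto $B_{2\delta}(0)$ you must also check that every $f_-\in B_{2\delta}(0)$ arises from some $f_0\in B_\delta(0)$, which requires running Theorem~\ref{scattering} at radius $2\delta$ and verifying the resulting $u(0)$ lands back in $B_\delta(0)$; your $\delta$-bookkeeping remark acknowledges this but does not carry it out.
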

		\begin{rem}
			This theorem builds the asymptotic completeness for the equation (\ref{abstract}) with small initial data.
		\end{rem}
		\vspace{10pt}
		\subsection{Abstract framework-V}
		Notice that, the assumption (V) in the abstract framework-Z requires that the decay power $\alpha$ equals to $\frac{2}{p+1}$, which, in some extent, makes the framework not very flexible for application. Next, we introduce another abstract framework in \cite{4}, which can finally help us establish the scattering theory for DNLW.
		
		We first introduce V-norm as follows.
		\begin{equation*}
			\|u\|_{V}:=\sup_{t\in\mathbb{R}}(1+|t|)^{\alpha}|u(t)|_{3},
		\end{equation*}
		with the corresponding space $V:=\lbrace u\in C(\mathbb{R};X_{3})\big| \|u\|_{V}<\infty\rbrace$.
		
		Then, we can build the following ``V-version" theorems.
		\begin{thm}\label{theorem1}
			We suppose the assumption (I)$\sim$(III), (V)$\sim$(VII), allowing bigger range of $\alpha$ as follows.
			\begin{equation*}
				\frac{1}{p}<\alpha<1.
			\end{equation*}
			Naturally, we replace ``Z" with ``V" in (VII). Then, if $f_{-}\in X$, $U_{0}(\cdot)f_{-}\in V$, there exists $\delta>0$, such that if $\|U_{0}(\cdot)f_{-}\|_{V}<\delta$, all assertions in Theorem \ref{scattering} can also be satisfied, with $u\in V$.
		\end{thm}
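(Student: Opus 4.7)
The strategy is a direct adaptation of the contraction mapping argument used for Theorem \ref{scattering}, with the rigid norm of $L^{p+1}(\mathbb{R}; X_3)$ replaced throughout by the weighted-sup norm $\|\cdot\|_V$. The crucial new ingredient is a time-weight integral estimate that accommodates the enlarged range $\tfrac{1}{p} < \alpha < 1$. First I would define the Duhamel map
\begin{equation*}
\Phi u(t) := U_0(t) f_- + \int_{-\infty}^t U_0(t-s) P u(s)\, ds
\end{equation*}
and seek a fixed point in a small closed ball of $V$ centered at $U_0(\cdot) f_-$. Using assumption (II) with $g = 0$ (hence $|Pu(s)|_1 \lesssim |u(s)|_3^p$) and the dispersive bound in (V), one obtains
\begin{equation*}
|\Phi u(t) - U_0(t) f_-|_3 \;\lesssim\; \|u\|_V^p \int_{-\infty}^t |t-s|^{-\alpha} (1+|s|)^{-\alpha p}\, ds.
\end{equation*}

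The core of the argument is then the pointwise-in-$t$ estimate
\begin{equation*}
(1+|t|)^\alpha \int_{-\infty}^t |t-s|^{-\alpha}(1+|s|)^{-\alpha p}\, ds \;\lesssim\; 1,
\end{equation*}
uniformly in $t \in \mathbb{R}$. For $t \le 0$ a change of variable $s = t - u$ reduces this to $\int_0^\infty u^{-\alpha}(1+|t|+u)^{-\alpha p}\,du$, which after splitting at $u = 1+|t|$ is bounded by a multiple of $(1+|t|)^{1 - \alpha - \alpha p}$; for $t \ge 0$ one splits $(-\infty, 0)$, $(0, t/2)$ and $(t/2, t)$ and bounds each piece with the trivial control on $|t-s|^{-\alpha}$ or on $(1+|s|)^{-\alpha p}$. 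Integrability of the singularity at $s = t$ forces $\alpha < 1$, while integrability and weight-balancing at $|s| = \infty$ force $\alpha p > 1$, matching precisely the hypothesis. The same split furnishes the Lipschitz bound $\|\Phi u - \Phi v\|_V \lesssim (\|u\|_V + \|v\|_V)^{p-1} \|u - v\|_V$, so that once $\delta$ is small enough $\Phi$ is a contraction on a small $V$-ball, producing a unique $u \in V$ with small $V$-norm satisfying $(\ast_{-\infty}, f_-)$.

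To upgrade $u$ to $C(\mathbb{R}; X)$, I would approximate by finite-time Cauchy problems: for $s_n \to -\infty$, let $u_n$ be the small-norm solution of $(\ast_{s_n}, U_0(s_n) f_-)$ on $[s_n, \infty)$, obtained by the same contraction applied on a right half-line. Since $U_0(s_n) f_- \in X$ (as $U_0$ is unitary on $X$), assumption (VII) yields $u_n \in C([s_n, \infty); X)$ together with the energy conservation $\tfrac{1}{2} |u_n(t)|_2^2 + G(u_n(t)) = \tfrac{1}{2} |f_-|_2^2$. A repetition of the contraction estimate shows $u_n \to u$ in $V$-norm on every fixed $[T, \infty)$; the energy identity then bounds $\{u_n(t)\}$ in the Hilbert space $X$, and weak compactness together with the strong convergence $u_n(t) \to u(t)$ in $X_3 \hookrightarrow X_4$ identifies $u(t) \in X$ and passes the conservation law to the limit. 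The scattering assertions $|u(t) - U_0(t) f_\pm|_2 \to 0$ reduce, via unitarity of $U_0$ on $X$, to the tail vanishing of the same weighted integral, and $f_+$ is defined as the $X$-limit of $U_0(-t) u(t)$ as $t \to \infty$; uniqueness follows from the contraction.

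The main obstacle is the weighted integral estimate, which must simultaneously control the barely integrable singularity $|t-s|^{-\alpha}$ at $s = t$ and the slow decay $(1+|s|)^{-\alpha p}$ at $s = -\infty$ against the weight $(1+|t|)^\alpha$. It is this estimate that pins down the range $\tfrac{1}{p} < \alpha < 1$ and is precisely where the $V$-framework gains flexibility over the $Z$-framework, since the enlarged range of $\alpha$ is exactly what is needed to apply the abstract result to the discrete wave operator whose decay exponents $\beta_d$ do not match $2/(p+1)$.
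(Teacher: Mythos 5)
The paper does not actually prove Theorem \ref{theorem1}; it is imported from the abstract framework of \cite{4} and stated without proof, so your proposal has to be judged against the standard argument for that framework. Your core step is that standard argument and is essentially correct: contract the Duhamel map in a small ball of $V$, using (II) to get $|Pu(s)|_1\lesssim|u(s)|_3^{p}$, (V) for the $|t-s|^{-\alpha}$ decay, and the weighted bound $(1+|t|)^{\alpha}\int_{-\infty}^{t}|t-s|^{-\alpha}(1+|s|)^{-\alpha p}\,ds\lesssim 1$, whose proof (splitting at $|t-s|\sim 1+|t|$) needs exactly $\alpha<1$ and $\alpha p>1$; your computation of this, and the idea of approximating by Cauchy problems at times $s_{n}\to-\infty$ and invoking (VII) for $X$-valued continuity and the conserved energy, are sound.

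The genuine gap is in the asymptotic assertions. You claim that $|u(t)-U_0(t)f_{\pm}|_2\to 0$ ``reduces, via unitarity, to the tail vanishing of the same weighted integral'' and you define $f_{+}$ as the strong $X$-limit of $U_0(-t)u(t)$. But the weighted estimate controls the Duhamel tail only in the $|\cdot|_3$ norm: (II) places $Pu(s)$ in $X_1$, (V) maps $X_1$ into $X_3$, and nothing in (I)--(III),(V)--(VII) embeds $X_1$ into $X$ or bounds $\int U_0(-s)Pu(s)\,ds$ in $X$ (the only common roof of $X_1,X,X_3$ is $X_4$). So the existence of the strong $X$-limit $f_{+}$, both strong convergences, and the equality $|f_{+}|_2=|f_{-}|_2$ asserted in Theorem \ref{scattering} are precisely what remains to be proved; they do not follow from the contraction estimates. (In the concrete DNLW application one does have $X_1\subset X$ and $|Pu(s)|_2\lesssim|u(s)|_3^{p}$, so your shortcut would work there, but it is not licensed by the abstract hypotheses under which the theorem is stated.) The standard route --- and the reason assumption (VI) is in the hypothesis list, which your proposal never uses --- is: $u\in V$ gives $|u(t)|_3\to0$, hence $G(u(t))\to0$ by continuity of $G$ at $0$, hence $|u(t)|_2^{2}\to|f_{-}|_2^{2}$ from the conservation law; the integrability $\int_{\mathbb{R}}|Pu(s)|_1\,ds\lesssim\|u\|_{V}^{p}\int_{\mathbb{R}}(1+|s|)^{-\alpha p}\,ds<\infty$ yields weak convergence of $U_0(-t)u(t)$ in $X$ by pairing against a dense set; and weak convergence together with convergence of the norms to the norm of the weak limit upgrades to strong convergence at $-\infty$, with a further no-energy-loss argument producing $f_{+}$ and $|f_{+}|_2=|f_{-}|_2$ at $+\infty$. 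Without this (or an added hypothesis of $X_1\hookrightarrow X$ type), your proof is incomplete exactly on the scattering assertions; relatedly, passing the energy identity to the limit in your approximation scheme needs either an application of (VII) to $u$ itself once $u(t)\in X$ is known, or a limit $t\to-\infty$, since lower semicontinuity alone only gives an inequality.
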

		\begin{rem}
			The assumption of the abstract framework-V is basically consistent with the assumption of the abstract framework-Z, but here we highlight several differences. First, we no longer introduce the $Z$ space and allow $\alpha$ to take a larger range $\frac{1}{p}<\alpha<1$ (naturally, the original $\alpha=\frac{2}{p+1}$ belongs to this range). And since this framework is based on $V$ space, the solution $u$ will also have a stronger property: $u\in V$, as $V\subseteq Z$.
		\end{rem}
		\begin{thm}\label{theorem2}
			We suppose all the assumptions in Theorem \ref{theorem1}, without assuming the norm of  $U_{0}(\cdot)f_{-}$ is sufficiently small. Then there exists $T>-\infty$, and a unique solution $u\in C(I;X\cap X_{3})$, satisfying the equation ($\ast_{-\infty},f_{-}$), 
			\begin{equation*}
				\sup_{-\infty<t\le T}(1+|t|)^{\alpha}|u(t)|_{3}<\infty,
			\end{equation*}
			and (\ref{asy}), (\ref{f}).
		\end{thm}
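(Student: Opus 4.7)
The plan is to set up a contraction-mapping argument on a half-line $(-\infty, T]$ in the restricted weighted space $V_T := \{u \in C((-\infty,T]; X_3) : \|u\|_{V_T} := \sup_{t \le T}(1+|t|)^{\alpha}|u(t)|_3 < \infty\}$, mirroring the proof of Theorem \ref{theorem1} but exploiting that the temporal gain factor $(1+|T|)^{1-p\alpha}$ tends to zero as $T \to -\infty$, which (since $\alpha > 1/p$, strictly) compensates for the lack of global smallness of $\|U_0(\cdot)f_-\|_V$.

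Concretely, I would study the Duhamel operator $\Phi(u)(t) := U_0(t)f_- + \int_{-\infty}^t U_0(t-s)Pu(s)\,ds$ on $V_T$. Combining the decay estimate (V) with the Lipschitz assumption (II), the pointwise bound $|Pu(s)|_1 \lesssim |u(s)|_3^p$ reduces the nonlinear contribution to the convolution-type integral $\int_{-\infty}^t |t-s|^{-\alpha}(1+|s|)^{-p\alpha}\,ds$. Because $\alpha < 1$ (convergence near $s=t$) and $(p+1)\alpha > 1$ (convergence at $-\infty$), this integral is bounded by $C(1+|t|)^{1-(p+1)\alpha}$, which gives
\begin{equation*}
\|\Phi(u) - U_0(\cdot)f_-\|_{V_T} \lesssim \|u\|_{V_T}^{p}\,(1+|T|)^{1-p\alpha},
\end{equation*}
and an analogous Lipschitz inequality for $\|\Phi(u_1) - \Phi(u_2)\|_{V_T}$. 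Setting $M := 2\|U_0(\cdot)f_-\|_V$ and choosing $|T|$ large enough that $CM^{p-1}(1+|T|)^{1-p\alpha}$ is less than both $1/2$ (contraction) and $1/(2M^{p-1})$ (self-mapping of the ball of radius $M$), the Banach fixed-point theorem yields a unique $u \in V_T$. The asymptotic (\ref{asy}) then follows by applying $U_0(-t)$ to $(\ast_{-\infty}, f_-)$ and using the same estimate to show $|\int_{-\infty}^t U_0(-s)Pu(s)\,ds|_2 \to 0$ as $t \to -\infty$.

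For the upgrade $u \in C((-\infty,T]; X)$ and the energy identity (\ref{f}), I would split $u = U_0(\cdot)f_- + w$: the linear piece is automatically in $C(\mathbb{R};X)$ by unitarity of $U_0$, and on sub-intervals $(-\infty, T_*]$ with $T_*$ sufficiently negative, the bound $\|w\|_{V_{T_*}} \lesssim M^p (1+|T_*|)^{1-p\alpha} \to 0$ lets (VII) be invoked to produce $X$-continuity near $-\infty$, then propagated forward to $T$. Once $u \in C((-\infty,T];X)$, (VII) yields constancy of $\tfrac{1}{2}|u(t)|_2^2 + G(u(t))$; letting $t \to -\infty$ and using $|u(t)|_2 \to |f_-|_2$ (from (\ref{asy}) and unitarity) together with $|u(t)|_3 \to 0$ (from the $V_T$ bound) and continuity of $G$ at $0$ in (VI) identifies the conserved value as $\tfrac{1}{2}|f_-|_2^2$. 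The main obstacle is precisely this promotion of regularity from $X_3$ to $X$ under the relaxed smallness hypothesis on $\|U_0(\cdot)f_-\|_V$: (VII) is stated only under V-norm smallness, so one must carefully exploit that near $t = -\infty$ the full V-norm of $u$ is dominated by that of $U_0(\cdot)f_-$ plus a vanishing term, and that in this regime (VII) applies to identify $u$ with an $X$-valued continuous curve which can then be extended to $T$.
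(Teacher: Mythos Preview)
The paper does not actually prove this theorem: Section~3 merely quotes the abstract framework-V results from \cite{4}, and Theorem~\ref{theorem2} is stated there without proof. In Section~4 the paper only \emph{applies} this theorem by verifying assumptions (I)--(III), (V)--(VII) for the concrete DNLW setting. So there is no in-paper argument to compare against.

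That said, your sketch is the standard argument one expects in \cite{4}, and it is essentially correct. The key quantitative point---that for $t\le T<0$ one has
\[
(1+|t|)^{\alpha}\int_{-\infty}^{t}|t-s|^{-\alpha}(1+|s|)^{-p\alpha}\,ds \;\lesssim\; (1+|t|)^{1-p\alpha}\;\le\;(1+|T|)^{1-p\alpha}\xrightarrow[T\to-\infty]{}0
\]
because $\frac{1}{p}<\alpha<1$---is exactly what trades the missing global smallness of $\|U_{0}(\cdot)f_{-}\|_{V}$ for the freedom to push $T$ far to the left. The contraction on the ball of radius $M=2\|U_{0}(\cdot)f_{-}\|_{V}$ in $V_{T}$ is then routine.

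You have correctly isolated the only genuinely delicate step: upgrading from $u\in V_{T}\subset C((-\infty,T];X_{3})$ to $u\in C((-\infty,T];X)$ so that (VII) and the energy identity apply. Your plan---observe that on $(-\infty,T_{\ast}]$ with $T_{\ast}\ll T$ the $V$-norm of $u$ is small (being at most $\|U_{0}(\cdot)f_{-}\|_{V_{(-\infty,T_{\ast}]}}$ plus a term vanishing as $T_{\ast}\to-\infty$), invoke (VII) there, then propagate $X$-continuity forward by the local theory---is the right one. One small caveat: (VII) as written asks for smallness of the \emph{solution's} $V$-norm, not the free evolution's; you should phrase the near-$-\infty$ step so that the fixed point $u$ itself has small $V_{T_{\ast}}$-norm (which follows from your self-map estimate once $T_{\ast}$ is negative enough), rather than relying on smallness of $U_{0}(\cdot)f_{-}$ alone. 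With that adjustment the argument closes.
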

		\begin{rem}
			This theorem is just the V-version of Theorem \ref{theorem3}, establishing the well-posedness on $(-\infty,T]$, with general data $f_{-}\in X$.
		\end{rem}
		We also have V-version of Theorem \ref{wee}, which ensures the asymptotic completeness with small initial data.
		\begin{thm}\label{theorem3.6}
			We suppose (I)$\sim$(III), (V)$\sim$(VII), allowing bigger range of $\alpha$ as follows.
			\begin{equation*}
				\frac{1}{p}<\alpha<1.
			\end{equation*}
			Naturally, we replace ``Z" with ``V" in (VII). If $f_{0}\in X$, $U_{0}(\cdot)f_{0}\in V$, there exists $\delta>0$, such that if $\|U_{0}(\cdot)f_{0}\|_{V}<\delta$, we have a unique strong solution $u\in C(\mathbb{R};X)\cap V$ of the equation (\ref{abstract}), with the initial data $f_{0}$. Furthermore, we have a unique $f_{\pm}\in X$, satisfying all assertions in Theorem \ref{scattering}.
		\end{thm}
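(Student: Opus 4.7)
The theorem is the V-framework analogue of Theorem \ref{wee}: it upgrades the existence-of-wave-operators result (Theorem \ref{theorem1}) to asymptotic completeness under smallness of the free-evolution V-norm. The plan is in three stages. First, solve the forward Cauchy problem $(\ast_{0},f_{0})$ by a contraction mapping in a small ball of $V$. Second, upgrade the resulting $V$-solution to a strong $C(\mathbb{R};X)$-solution by invoking the V-version of assumption (VII). Third, construct the asymptotic states $f_{\pm}\in X$ from the long-time behavior of $u$.

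For the fixed-point step, set $\Phi(u)(t) := U_{0}(t)f_{0}+\int_{0}^{t}U_{0}(t-s)Pu(s)\,ds$. Combining the decay estimate in (V) with the nonlinear bound in (II) gives, for $t>0$,
\begin{equation*}
(1+t)^{\alpha}\left|\Phi(u)(t)-U_{0}(t)f_{0}\right|_{3} \;\lesssim\; (1+t)^{\alpha}\int_{0}^{t}(t-s)^{-\alpha}(1+s)^{-\alpha p}\,ds\,\cdot\,\|u\|_{V}^{p}.
\end{equation*}
The hypotheses $\alpha<1$ (local integrability of $(t-s)^{-\alpha}$ near $s=t$) and $\alpha p>1$ (integrability of $(1+s)^{-\alpha p}$ at infinity) make the displayed right-hand side bounded uniformly in $t$ after splitting the integral into $[0,t/2]$ and $[t/2,t]$. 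A parallel computation, exploiting the Lipschitz form of (II), yields the contraction bound $\|\Phi(u)-\Phi(v)\|_{V}\lesssim (\|u\|_V+\|v\|_V)^{p-1}\|u-v\|_V$. Choosing $\delta$ so small that the ball of radius $2\|U_{0}(\cdot)f_{0}\|_V$ in $V$ is invariant under $\Phi$ and $\Phi$ is a strict contraction there produces a unique $u\in V$ solving $(\ast_{0},f_{0})$. Because $\|u\|_V$ remains small, the V-analogue of (VII) places $u\in C(\mathbb{R};X)$ and delivers the energy identity; uniqueness in $C(\mathbb{R};X)\cap V$ then reduces to the $V$-uniqueness already obtained.

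For the asymptotic states, since $U_{0}$ is unitary on $X$, the target convergence $\|u(t)-U_{0}(t)f_{\pm}\|_{X}\to 0$ is equivalent to convergence in $X$ of $U_{0}(-t)u(t) = f_{0}+\int_{0}^{t}U_{0}(-s)Pu(s)\,ds$ as $t\to\pm\infty$. The most economical route is to observe that the $V$-solution $u$ constructed above retroactively satisfies the hypotheses of Theorem \ref{theorem1}: one identifies $f_{-}:=\lim_{t\to-\infty}U_{0}(-t)u(t)$ with the asymptotic state produced by Theorem \ref{theorem1} and, symmetrically, $f_{+}$ with the corresponding $+\infty$ limit. Uniqueness of the $V$-solution for the wave-operator problem then matches the two solutions and transfers the convergence in $X$, the uniqueness of $f_{\pm}$, and all the assertions listed in Theorem \ref{scattering}, including the energy equalities.

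The main obstacle, exactly as in the Z-framework proof of Theorem \ref{wee}, is the passage from $V$-control of $u$ to $X$-convergence of the Duhamel tail $\int_{t}^{\pm\infty}U_{0}(-s)Pu(s)\,ds$: the nonlinear term $Pu$ is only known to lie in $X_{1}$ by (II), whereas $U_{0}$ is only asserted to be unitary on $X$, so there is no direct pointwise-in-time $X$-bound for $U_{0}(-s)Pu(s)$. Bridging this gap requires combining the $V$-decay of $u$ with energy conservation from (VII) and a duality pairing against $X$-test data; this step is somewhat more delicate here than in the Z-framework because the precise calibration $\alpha=\frac{2}{p+1}$ no longer holds and one must work throughout the admissible range $\frac{1}{p}<\alpha<1$.
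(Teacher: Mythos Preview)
The paper does not supply its own proof of this theorem: it is quoted from reference \cite{4} as part of the abstract framework in Section~3, and is then applied in Section~4 by verifying assumptions (I)--(III), (V)--(VII) for DNLW. So there is no paper proof to compare against directly.

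Your sketch is a plausible reconstruction of the argument one expects to find in \cite{4}. The contraction mapping in $V$ is correct: the conditions $\alpha<1$ and $\alpha p>1$ are exactly what make $(1+t)^{\alpha}\int_{0}^{t}(t-s)^{-\alpha}(1+s)^{-\alpha p}\,ds$ bounded, and the Lipschitz estimate from (II) gives the difference bound. Invoking the $V$-version of (VII) to promote the $V$-solution to $C(\mathbb{R};X)$ with energy conservation is also the right move.

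Where your proposal stops short of a complete argument is precisely the point you flag in the last paragraph: the convergence of $U_{0}(-t)u(t)$ in $X$ as $t\to\pm\infty$. You describe the difficulty accurately (no direct $X$-bound on $U_{0}(-s)Pu(s)$ since $Pu(s)\in X_{1}$) but the resolution you gesture at --- ``energy conservation plus a duality pairing'' --- is too vague to count as a proof. The standard route in \cite{4} is rather to run the same fixed-point argument that proved Theorem~\ref{theorem1} on the tail interval: since $\|u\|_{V}$ is small, $u$ restricted to $[T,\infty)$ (resp.\ $(-\infty,-T]$) coincides with the unique $V$-solution of $(\ast_{+\infty},f_{+})$ (resp.\ $(\ast_{-\infty},f_{-})$) for suitable $f_{\pm}$, and the $X$-convergence then comes for free from the conclusions of Theorem~\ref{theorem1}. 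Your third paragraph hints at this but conflates it with the duality argument; you should pick one mechanism and carry it through.
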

		
		\section{Proof of Theorem \ref{main}, \ref{main2}, \ref{main3}}
		In order to use the abstract framework-V, we shall first introduce the discrete Sobolev spaces and their multiplier theorem, which can be referred to \cite{24}. For convenience, we still denote the discrete Sobolev spaces with notations in the usual Sobolev spaces.
		
		\subsection{Discrete Sobolev space}
		\begin{defi}\label{3.3}
			For $f:\mathbb{Z}^{d}\to\mathbb{C}$, $s\in \mathbb{R}$, $1\le p\le \infty$, we define the homogeneous Soobolev norm and the inhomogeneous Sobolev norm as follows.
			\begin{equation*}
				\|f\|_{\dot{W}^{s,p}(\mathbb{Z}^{d})}:=\|\mathcal{F}^{-1}(\omega(\xi)^{s}\mathcal{F}(f))\|_{\ell^{p}(\mathbb{Z}^{d})},
			\end{equation*}
			\begin{equation*}
				\|f\|_{W^{s,p}(\mathbb{Z}^{d})}:=\|\mathcal{F}^{-1}(\langle\omega(\xi)\rangle^{s}\mathcal{F}(f))\|_{\ell^{p}(\mathbb{Z}^{d})},
			\end{equation*}
			where $\mathcal{F}$, $\mathcal{F}^{-1}$ are discrete Fourier transform and its inverse. Then, the corresponding discrete Sobolev spaces are defined as follows.
			\begin{equation*}
				\dot{W}^{s,p}(\mathbb{Z}^{d}):=\lbrace f:\mathbb{Z}^{d}\to\mathbb{C}\big|\|f\|_{\dot{W}^{s,p}(\mathbb{Z}^{d})}<\infty \rbrace,
			\end{equation*}
			\begin{equation*}
				W^{s,p}(\mathbb{Z}^{d}):=\lbrace f:\mathbb{Z}^{d}\to\mathbb{C}\big|\|f\|_{W^{s,p}(\mathbb{Z}^{d})}<\infty \rbrace.
			\end{equation*}
			Conventionally, we also denote $\dot{W}^{s,2}(\mathbb{Z}^{d})$, $W^{s,2}(\mathbb{Z}^{d})$ as $\dot{H}^{s}(\mathbb{Z}^{d})$, $H^{s}(\mathbb{Z}^{d})$, respectively.
		\end{defi}
		\begin{rem}
			In fact, the definitions of the discrete Sobolev spaces in \cite{24} is different from the definitions above. Specifically, the definitions in \cite{24} are
			\begin{equation*}
				\|f\|_{\dot{W}^{s,p}(\mathbb{Z}^{d})}:=\|\mathcal{F}^{-1}(|\xi|^{s}\mathcal{F}(f))\|_{\ell^{p}(\mathbb{Z}^{d})}, \quad \|f\|_{W^{s,p}(\mathbb{Z}^{d})}:=\|\mathcal{F}^{-1}(\langle\xi\rangle^{s}\mathcal{F}(f))\|_{\ell^{p}(\mathbb{Z}^{d})}.
			\end{equation*}
			Since we all consider the lattices $\mathbb{Z}^{d}$ in this paper, we can see that the above two definitions of the discrete Sobolev spaces are equivalent, based on the following multiplier theorems. The reason why we choose the former definitions is that it can be compatible with the energy conservation of DW.
		\end{rem}
	   \begin{thm}\label{muti}
			Suppose the Fourier multiplier of the operator $T_{m}$ is $m(\xi)$, i.e. $\mathcal{F}(T_{m}f)(\xi)=m(\xi)\mathcal{F}(f)(\xi)$, $\forall \xi\in \mathbb{T}^{d}$. If the multiplier $m(\xi)$ satisfies
			\begin{equation*}
				\left|\partial^{\beta}m(\xi)\right|\le C_{\beta}|\xi|^{-|\beta|}, \quad \forall \xi\in\mathbb{T}^{d}-\lbrace0\rbrace,
			\end{equation*}
			where $\beta$ is any multi-index that $|\beta|\le d+2$. Then for $1<p<\infty$, there exists a constant $C_{p}>0$, such that
			\begin{equation*}
				\|T_{m}f\|_{\ell^{p}(\mathbb{Z}^{d})}\le C_{p}\|f\|_{\ell^{p}(\mathbb{Z}^{d})}.
			\end{equation*}
		\end{thm}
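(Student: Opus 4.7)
The plan is to establish $\ell^p$-boundedness via the Calder\'on--Zygmund machinery on $\mathbb{Z}^d$, viewed as a space of homogeneous type with the counting measure. First I would realize $T_m$ as a convolution operator by defining
\begin{equation*}
K(n) := \frac{1}{(2\pi)^d}\int_{\mathbb{T}^d} m(\xi)\, e^{in\cdot\xi}\, d\xi, \qquad n\in \mathbb{Z}^d,
\end{equation*}
so that $T_m f = K \ast f$. The $\ell^2 \to \ell^2$ bound is free from Plancherel together with the hypothesis at $|\beta|=0$, which forces $\|m\|_{L^\infty(\mathbb{T}^d)}\le C_0<\infty$.

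The analytic core is to verify standard Calder\'on--Zygmund size and regularity estimates for $K$, namely $|K(n)|\lesssim |n|^{-d}$ and $|K(n+e_j)-K(n)|\lesssim |n|^{-d-1}$ for $|n|$ large. I would proceed via a dyadic decomposition at the singular point: fix $\psi \in C_c^\infty(\mathbb{R}^d)$ supported in an annulus $\{|\xi|\sim 1\}$ with $\sum_{j\ge 0}\psi(2^j\xi)=1$ on a neighborhood of $0$, and split
\begin{equation*}
m(\xi) = m_\infty(\xi) + \sum_{j\ge 0} m_j(\xi), \qquad m_j(\xi) := m(\xi)\psi(2^j\xi),
\end{equation*}
where $m_\infty$ is smooth on all of $\mathbb{T}^d$ and each $m_j$ lives on $\{|\xi|\sim 2^{-j}\}$. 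The piece $m_\infty$ produces a rapidly decreasing Fourier kernel, hence a convolution operator bounded on every $\ell^p$. For each $m_j$, the Mikhlin hypothesis yields the rescaled bounds $|\partial^\beta m_j(\xi)|\lesssim_\beta 2^{j|\beta|}$ for $|\beta|\le d+2$. Repeated integration by parts in the oscillatory integral $K_j(n) = (2\pi)^{-d}\int m_j(\xi)\, e^{in\cdot\xi}\, d\xi$, using the identity $e^{in\cdot\xi} = |n|^{-2}(-i\, n\cdot\nabla_\xi)e^{in\cdot\xi}$, gives
\begin{equation*}
|K_j(n)| \lesssim 2^{-jd}\min\bigl(1,\ (2^j/|n|)^{d+1}\bigr),
\end{equation*}
and summing the geometric series over $j\ge 0$ yields $|K(n)|\lesssim |n|^{-d}$.

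For the regularity estimate I would write
\begin{equation*}
K(n+e_j)-K(n) = \frac{1}{(2\pi)^d}\int_{\mathbb{T}^d} m(\xi)\bigl(e^{ie_j\cdot\xi}-1\bigr)\, e^{in\cdot\xi}\, d\xi,
\end{equation*}
and exploit the extra factor $|e^{ie_j\cdot\xi}-1|\lesssim |\xi|$, which at dyadic scale $2^{-k}$ saves the power $2^{-k}$. The same integration-by-parts scheme, now requiring one additional derivative (hence the calibration $|\beta|\le d+2$ in the hypothesis), produces $|K(n+e_j)-K(n)|\lesssim |n|^{-d-1}$. With $T_m$ bounded on $\ell^2$ and $K$ a standard Calder\'on--Zygmund kernel on $\mathbb{Z}^d$, the classical Calder\'on--Zygmund decomposition gives the weak-$(1,1)$ bound, and Marcinkiewicz interpolation together with duality yield $\ell^p$-boundedness for all $1<p<\infty$.

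The main obstacle is the bookkeeping in the dyadic kernel estimates: one must verify that exactly $d+2$ derivatives of $m$ simultaneously suffice for both the size estimate (decay $|n|^{-d}$, requiring strictly more than $d$ integrations by parts to obtain summability in $j$) and the regularity estimate (decay $|n|^{-d-1}$, demanding one extra derivative to absorb the factor $|\xi|$), and that the summation over dyadic scales closes absolutely in both regimes $2^j\lesssim |n|$ and $2^j\gtrsim |n|$.
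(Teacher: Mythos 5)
Your proposal is correct, but it cannot be compared against an argument in the paper, because the paper does not prove Theorem \ref{muti} at all: it is imported from the reference cited for the discrete Sobolev spaces and their multiplier theorem. Judged on its own, your Calder\'on--Zygmund route is the standard and appropriate one, and the calibration of derivatives is right: realizing $T_m$ as convolution with $K(n)=(2\pi)^{-d}\int_{\mathbb{T}^d}m(\xi)e^{in\cdot\xi}\,d\xi$, getting $\ell^2$ from Plancherel and the $|\beta|=0$ bound, decomposing dyadically at the singularity, and integrating by parts $d+1$ times for the size bound $|K(n)|\lesssim|n|^{-d}$ and $d+2$ times for the difference bound $|K(n+e_j)-K(n)|\lesssim|n|^{-d-1}$ (with $d+1$ the sum over scales $2^k\le|n|$ only closes up to a logarithm, so the extra derivative is genuinely what makes the geometric series converge); the Hörmander condition then follows by telescoping, and weak-$(1,1)$, Marcinkiewicz interpolation and duality (the adjoint multiplier $\overline{m(-\xi)}$ satisfies the same hypotheses) give all $1<p<\infty$. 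Two small imprecisions worth fixing but not gaps: the piece $m_\infty$ is only $C^{d+2}$ on $\mathbb{T}^d$, not smooth, so its kernel decays like $|n|^{-(d+2)}$ rather than rapidly --- still summable, hence an $\ell^1$ kernel and Young's inequality suffice; and in the regularity step the Leibniz rule should be noted, since derivatives falling on $e^{i\xi_j}-1$ lose the factor $|\xi|$ but compensate by costing one fewer derivative of $m$, so the rescaled bound $|\partial^\beta(m_k(\xi)(e^{i\xi_j}-1))|\lesssim 2^{k|\beta|}2^{-k}$ on the annulus $|\xi|\sim 2^{-k}$ still holds.
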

		\vspace{10pt}
		\subsection{Proof of major theorems}
		Corresponding to the notations in the abstract framework-V, we take $X:=\dot{H}^{1}(\mathbb{Z}^{d})\times \ell^{2}(\mathbb{Z}^{d})$, $X_{3}:=\ell^{p+1}(\mathbb{Z}^{d})\times\ell^{p+1}(\mathbb{Z}^{d})$, $X_{1}:=\lbrace0\rbrace\times\ell^{1+\frac{1}{p}}(\mathbb{Z}^{d})$, $X_{4}:=\dot{W}^{1, p+1}(\mathbb{Z}^{d})\times\ell^{p+1}(\mathbb{Z}^{d})$, and $\alpha=\beta_{d}(1-\frac{4}{p+1})$. Besides, we take the functional $G$ on $X_{3}$ as follows.
		\begin{equation}\label{functional}
			G(\left[f,g\right]):=\frac{-\mu}{p+1}\|f\|_{\ell^{p+1}(\mathbb{Z}^{d})}^{p+1}, \quad \forall \left[f,g\right]\in X_{3}.
		\end{equation}
		
		Notice that our desired Theorem \ref{main}, \ref{main2}, \ref{main3}, correspond to Theorem \ref{theorem1}, \ref{theorem2}, \ref{theorem3.6} in the abstract framework-V, respectively. Next, we just need to verify all the assumptions imposed. 
		\begin{proof}[Proof of the assumption (I)]
			From \cite{25}, we know the energy conservation:
			\begin{equation*}
				E(\widetilde{u})(t):=\frac{1}{2}\|u\|_{\dot{H}^{1}(\mathbb{Z}^{d})}^{2}+\frac{1}{2}\|\partial_{t}u\|_{\ell^{2}(\mathbb{Z}^{d})}^{2}\; \text{is independent with $t$}.
			\end{equation*}
			This energy is just the norm in $X=\dot{H}^{1}(\mathbb{Z}^{d})\times\ell^{2}(\mathbb{Z}^{d})$, so $\lbrace U_{0}(t)\rbrace_{t\in \mathbb{R}}$ is an one parameter unitary operator group on $X$.
		\end{proof}
		\begin{proof}[Proof of the assumption (II)]
		    For $F=(f_{1},f_{2}), G=(g_{1},g_{2})$, the Lipschitz-type control
			\begin{equation*}
				|PF-PG|_{1}\lesssim (|F|_{3}+|G|_{3})^{p-1}|F-G|_{3},
			\end{equation*}
			can be reduced to 
			\begin{equation*}
				\||f_{1}|^{p-1}f_{1}-|g_{1}|^{p-1}g_{1}\|_{\ell^{1+\frac{1}{p}}(\mathbb{Z}^{d})}\lesssim (\|f_{1}\|_{\ell^{p+1}(\mathbb{Z}^{d})}+\|g_{1}\|_{\ell^{p+1}(\mathbb{Z}^{d})})^{p-1}\|f_{1}-g_{1}\|_{\ell^{p+1}(\mathbb{Z}^{d})}.
			\end{equation*}
			It's a direct consequence of the H\"{o}lder inequality and 
			\begin{equation*}
				\Big||f_{1}|^{p-1}f_{1}-|g_{1}|^{p-1}g_{1}\Big|\lesssim (|f_{1}|^{p-1}+|g_{1}|^{p-1})|f_{1}-g_{1}|.
			\end{equation*}
		\end{proof}
		\begin{proof}[Proof of the assumption (III)]
			For $1\le p<q\le\infty$, we have the embedding $\ell^{p}(\mathbb{Z}^{d})\subseteq\ell^{q}(\mathbb{Z}^{d})$. Then we have $X_{1},X\subseteq X_{4}$. Based on Theorem \ref{muti}, we see that $|\xi|$ satisfies the Hormander-Mikhlin condition, which leads to $\|\cdot\|_{\dot{W}^{1, p+1}(\mathbb{Z}^{d})}\lesssim \|\cdot\|_{\ell^{p+1}(\mathbb{Z}^{d})}$, and $X_{3}\subseteq X_{4}$. The density is obvious.
		\end{proof}
		\begin{proof}[Proof of the assumption (V)]
			From Theorem \ref{jj} and Remark \ref{d}, we can get 
			\begin{equation*}
				|U_{0}(t)F|_{3}\lesssim (1+|t|)^{-\beta_{d}(1-\frac{4}{p+1})}|F|_{1}.
			\end{equation*}
			Simple calculation shows that when $p>p_{d}$, we have $\beta_{d}(1-\frac{4}{p+1})>\frac{1}{p}$, which satisfies the requirement.
			
			Next we show that  $U_{0}(t)$ can be continuously extended to $X_{3}$, with value $X_{4}$. Suppose $F=(f_{1},f_{2})\in X_{3}$, we have the following expression from \cite{25}.
			
			\[U_{0}(t)F=
			\begin{bmatrix}
				\cos(t\sqrt{-\Delta}) & \dfrac{\sin(t\sqrt{-\Delta})}{\sqrt{-\Delta}} \\
				-\sin(t\sqrt{-\Delta})\cdot \sqrt{-\Delta} & \cos(t\sqrt{-\Delta})
			\end{bmatrix} 
			\begin{bmatrix}
				f_{1} \\
				f_{2}
			\end{bmatrix},\]  
			where $\cos(t\sqrt{-\Delta})f:=\mathcal{F}^{-1}(\cos(t\cdot\omega(\xi))\mathcal{F}(f))$, and other symbols are defined similarly.
			
			From the definition, we deduce that
			\begin{equation*}
				|U_{0}(t)F|_{4}\lesssim \|\cos(t\sqrt{-\Delta})f_{1}\|_{\dot{W}^{1,p+1}(\mathbb{Z}^{d})}+\left\|\frac{\sin(t\sqrt{-\Delta})}{\sqrt{-\Delta}}f_{2}\right\|_{\dot{W}^{1,p+1}(\mathbb{Z}^{d})}
			\end{equation*}
			\begin{equation*}
				+\|\sin(t\sqrt{-\Delta})\cdot \sqrt{-\Delta}f_{1}\|_{\ell^{p+1}(\mathbb{Z}^{d})}+\|\cos(t\sqrt{-\Delta})f_{2}\|_{\ell^{p+1}(\mathbb{Z}^{d})}
			\end{equation*}
			\begin{equation*}
				:=I_{1}+I_{2}+I_{3}+I_{4}.
			\end{equation*}
			Notice that, the Fourier multipliers of $I_{1}, I_{2}, I_{3}, I_{4}$ are 
			\begin{equation*}
				I_{1}\leftrightarrow \cos(t\cdot\omega(\xi))\omega(\xi), \quad I_{2}\leftrightarrow\sin(t\cdot\omega(\xi)),
			\end{equation*}
			\begin{equation*}
				I_{3}\leftrightarrow \sin(t\cdot\omega(\xi))\omega(\xi), \quad I_{4}\leftrightarrow \cos(t\cdot\omega(\xi)).
			\end{equation*}
			Simple calculation shows that they satisfy the Hormander-Mikhlin condition, so we have
			\begin{equation*}
				|U_{0}(t)F|_{4}\le C(t) |F|_{3},
			\end{equation*}
			where $C(t)$ is locally bounded in $\mathbb{R}$. As $X\cap X_{1}$ is dense in $X_{1}$, there follows the $U_{0}(t)U_{0}(s)F=U_{0}(t+s)F,\forall F\in X_{1}$.
		\end{proof}
		\begin{proof}[Proof of assumption (VI)]
			From the definition (\ref{functional}), $G$ is actually defined on the whole $X_{3}$ and continuous.
		\end{proof}
		\begin{proof}[Proof of assumption (VII)]
			For convenience, we suppose $s=0$. To solve the equation ($\ast_{0},F$), we consider the following iteration.
			\begin{equation*}
				\widetilde{u}_{m+1}(t)=U_{0}(t)F+\int_{0}^{t}U_{0}(t-s)P\widetilde{u}_{m}(s)ds,\quad \widetilde{u}_{m}(t):=\left(\widetilde{u}_{m}^{(1)}(t),\widetilde{u}_{m}^{(2)}(t)\right),
			\end{equation*}
			where we take $\widetilde{u}_{0}(t)=(0,0)$. Now we claim that when $T\ll1$, we have $\sup_{t\in [-T,T] }|\widetilde{u}_{m}(t)|_{2}\le C,\forall m\in \mathbb{N}$, where $C$ only depends on $|F|_{2}$ (we can take $C=2|F|_{2}$). When $m=0$, the conclusion is obvious. Then, using the induction, we suppose the case $m=k$ holds and prove the case $m=k+1$. From the iteration and the Minkowski inequality, we see
			\begin{equation*}
				|\widetilde{u}_{k+1}(t)|_{2}\le |F|_{2}+\int_{0}^{t}|P\widetilde{u}_{k}(s)|_{2}ds= |F|_{2}+\int_{0}^{t}\|\widetilde{u}_{k}^{(1)}(s)\|_{\ell^{2p}(\mathbb{Z}^{d})}^{p}ds
			\end{equation*}
			\begin{equation*}
				\le |F|_{2}+\int_{0}^{t}\|\widetilde{u}_{k}^{(1)}(s)\|_{\ell^{2}(\mathbb{Z}^{d})}^{p}ds\le \frac{C}{2}+|t|C^{p}.
			\end{equation*}
			Then we take $T\ll_{C}1$, and get $|\widetilde{u}_{k+1}(t)|_{2}\le C$, which completes  the induction.
						
			Next, we can use the contraction mapping to find the solution of the equation ($\ast_{0},F$). Specifically, from the Minkowski inequality and the H\"{o}lder inequality, we have
			\begin{equation*}
				\|\widetilde{u}_{m+1}-\widetilde{u}_{m}\|_{C([-T,T];X)}=\sup_{t\in [-T,T]}\left|\int_{0}^{t}U_{0}(t-s)\left(P\widetilde{u}_{m}(s)-P\widetilde{u}_{m-1}(s)\right)ds\right|_{2}\le \sup_{t\in [-T,T]}\int_{0}^{t}|P\widetilde{u}_{m}(s)-P\widetilde{u}_{m-1}(s)|_{2}ds
			\end{equation*}
			\begin{equation*}
				=\sup_{t\in [-T,T]}\int_{0}^{t}\left\||\widetilde{u}_{m}^{(1)}(s)|^{p-1}\widetilde{u}_{m}^{(1)}(s)-|\widetilde{u}_{m-1}^{(1)}(s)|^{p-1}\widetilde{u}_{m-1}^{(1)}(s)\right\|_{\ell^{2}(\mathbb{Z}^{d})}ds
			\end{equation*}
			\begin{equation*}
				\lesssim \sup_{t\in [-T,T]}\int_{0}^{t}\left\|\left(|\widetilde{u}_{m}^{(1)}(s)|^{p-1}+|\widetilde{u}_{m-1}^{(1)}(s)|^{p-1}\right)\left(\widetilde{u}_{m}^{(1)}(s)-\widetilde{u}_{m-1}^{(1)}(s)\right)\right\|_{\ell^{2}(\mathbb{Z}^{d})}ds
			\end{equation*}
			\begin{equation*}
				\lesssim \sup_{t\in [-T,T]}\int_{0}^{t}\left(\left\|\widetilde{u}_{m}^{(1)}(s)\right\|_{\ell^{2p}(\mathbb{Z}^{d})}^{p-1}+\left\|\widetilde{u}_{m-1}^{(1)}(s)\right\|_{\ell^{2p}(\mathbb{Z}^{d})}^{p-1}\right)\|\widetilde{u}_{m}^{(1)}(s)-\widetilde{u}_{m-1}^{(1)}(s)\|_{\ell^{2p}(\mathbb{Z}^{d})}ds
			\end{equation*}
			\begin{equation*}
				\lesssim C^{p-1}T\cdot \|\widetilde{u}_{m}-\widetilde{u}_{m-1}\|_{C([-T,T];X)}.
			\end{equation*}
			Thus, when $T\ll1$, we have the contraction $\|\widetilde{u}_{m+1}-\widetilde{u}_{m}\|_{C([-T,T];X)}\le \frac{1}{2}	\|\widetilde{u}_{m}-\widetilde{u}_{m-1}\|_{C([-T,T];X)}$. Directly, $\lbrace\widetilde{u}_{m}\rbrace_{m=0}^{\infty}$ is a Cauchy sequence in $C([-T,T];X)$, with limit $\widetilde{u}\in C([-T,T];X)$ as the locally well-posed strong solution of the equation ($\ast_{0},F$).
			
			As the time of existence only relies on the norm of the initial data, it suffices to ensure that $|\widetilde{u}(t)|_{2}$ will never blow up in finite time. From \cite{25}, we also have the following energy conservation:
			\begin{equation*}
				\frac{1}{2}|\widetilde{u}(t)|_{2}^{2}-\frac{\mu}{p+1}\|u(t)\|_{\ell^{p+1}(\mathbb{Z}^{d})}^{p+1} \text{ is independent with $t$.}
			\end{equation*}
			Since $\widetilde{u}\in V$, $\|\widetilde{u}\|_{V}\ll 1$, we see that $\sup_{t\in I}\|u(t)\|_{\ell^{p+1}(\mathbb{Z}^{d})}\ll1$. Combining with the energy conservation, there is no any blow up for $\widetilde{u}$ and we can extend it to the whole $I$.
		\end{proof}
		\begin{rem}\label{d}
			Except from the assumptions (I)$\sim$(VII), we also need to ensure that $U_{0}(\cdot)F_{-},U_{0}(\cdot)F_{0}\in V$, with sufficiently small norms. Then it is reduced to the following estimate.
			\begin{equation*}
				|U_{0}(\cdot)F|_{3}=\left|\left[u,\partial_{t}u\right]\right|_{3}=\|u\|_{\ell^{p+1}(\mathbb{Z}^{{d}})}+\|\partial_{t}u\|_{\ell^{p+1}(\mathbb{Z}^{{d}})}
			\end{equation*}
			\begin{equation*}
				\lesssim (1+|t|)^{-\beta_{d}(1-\frac{4}{p+1})}\|F\|_{\ell^{1+\frac{1}{p}}(\mathbb{Z}^{d})},
			\end{equation*}
			where $F=(f,g)\in Y=\ell^{1+\frac{1}{p}}(\mathbb{Z}^{d})\times\ell^{1+\frac{1}{p}}(\mathbb{Z}^{d})$, $u$ is the solution of DW with the initial data. Recalling Remark
			\ref{rem}, the above estimate is a direct consequence of Theorem \ref{jj}.
		\end{rem}
		\begin{rem}
			Actually, the proof of the assumption (VII) can be abstracted as the following framework in \cite{26}:
			\begin{itemize}
				\item For $T=[t_{0},+\infty)$ or $(-\infty,+\infty)$, $B$ is a Banach space, and the propagator $W(t,s)$ is a $(t,s)$-continuous bounded linear operator for $t,s\in T, t\ge s$, satisfying 
				\begin{equation*}
					W(t_{3},t_{2})W(t_{2},t_{1})=W(t_{3},t_{1}), \quad W(t,t)=Id, 
				\end{equation*}
				where $\forall t,t_{1},t_{2},t_{3}\in T$, $t_{1}\le t_{2}\le t_{3}$.
				\item For $T=[t_{0},+\infty)$, $W$ as above, if $t\in T$, $K_{t}$ is an operator on $B$, not necessarily linear, and has a uniformly local Lipschitz-type control, i.e.
				\begin{equation*}
					\left\|K_{t}(u)-K_{t}(v)\right\|_{B}\le C(M,t^{\ast})\|u-v\|_{B},
				\end{equation*}
				where $t\le t^{\ast}$, $\|u\|_{B}, \|v\|_{B}\le M$, $C(M,t^{\ast})$ only depends on $M,t^{\ast}$. Furthermore, if $K_{t}(u)$ is continuous in $T\times B\ni (t,u)$, then for any initial data $u_{0}\in B$, the following equation
				\begin{equation*}
					u(t)=W(t,t_{0})u_{0}+\int_{t_{0}}^{t}W(t,s)K_{s}(u(s))ds,
				\end{equation*}
				has a unique local solution $u\in C(I;B)$, $t_{0}\in I\subseteq T$. Moreover, we have the continuation principle: if $t^{\ast}$ is the maximal time of existence and $t^{\ast}<\infty$, then $\|u(t)\|_{B}\to +\infty$, $t\to t^{\ast}$.
			\end{itemize}
		\end{rem}
		\begin{rem}\label{4.4}
			In this paper, we only consider the case $d=2,3,4,5$, instead of higher dimensions and $1$ dimension. For higher dimensions, the degenerate points of the phase function will be more complex, and there is no sufficient harmonic analysis tool to study it. For $1$ dimension, we also fail to establish the scattering theory as integrand $\frac{1}{\omega(\xi)}$ has greater singularity at $0$, leading to the loss of the decay rate in the integral (\ref{Green}) or (\ref{Oscillatory}). This fact can also be reflected by the estimate in \cite{1}. Actually, \cite{1} shows that the integral (\ref{d-1}) has the decay rate $|t|^{-(d-1)}$, so we have no decay in $1$ dimension. Next, we strictly prove that there has no uniform decay estimate, which suffices to show 
			\begin{equation}\label{kl}
				\liminf_{|t|\to +\infty}\sup_{x\in \mathbb{Z}}|G(x,t)|=
				\liminf_{|t|\to +\infty}\sup_{x\in \mathbb{Z}}\left|\frac{1}{2\pi}\int_{\mathbb{T}}e^{ix\cdot\xi}\frac{\sin(t\omega(\xi))}{\omega(\xi)}d\xi\right|\ge\frac{1}{2}.
			\end{equation}
			We also decompose (\ref{part}) into two parts:
			\begin{equation*}
				G(x,t)=\frac{1}{2\pi}\int_{-\delta}^{\delta}e^{ix\cdot\xi}\frac{\sin(t\omega(\xi))}{\omega(\xi)}d\xi+\frac{1}{2\pi}\int_{\mathbb{T}-(-\delta,\delta))}e^{ix\cdot\xi}\frac{\sin(t\omega(\xi))}{\omega(\xi)}d\xi:=I_{1}(x,t)+I_{2}(x,t),
			\end{equation*}
			where $0<\delta\ll 1$. For $I_{2}$, we can rewrite it as the form of (\ref{Oscillatory}), with the phase function $\phi(v,\xi)$. Then $\phi(v,\xi)=v\cdot\xi-2|\sin(\frac{\xi}{2})|$ has no degenerate points, which leads to 
			\begin{equation*}
				\sup_{x\in \mathbb{Z}}I_{2}(x,t)\lesssim (1+|t|)^{-\frac{1}{2}}.
			\end{equation*}
			Taking $x=0$, we deduce that
			\begin{equation*}
				\liminf_{|t|\to +\infty}\sup_{x\in \mathbb{Z}}|G(x,t)|=\liminf_{|t|\to +\infty}\sup_{x\in \mathbb{Z}}|I_{1}(x,t)|\ge \liminf_{|t|\to +\infty}\left|\frac{1}{2\pi}\int_{-\delta}^{\delta}\frac{\sin(t\omega(\xi))}{\omega(\xi)}d\xi\right|.
			\end{equation*}
			We can also consider the frequently-used change of variables $\zeta:=2\sin(\frac{\xi}{2})$, and derive
			\begin{equation*}
				\frac{1}{2\pi}\int_{-\delta}^{\delta}\frac{\sin(t\omega(\xi))}{\omega(\xi)}d\xi=\frac{1}{2\pi}\int_{-\epsilon}^{\epsilon}\frac{\sin(t\zeta)}{\zeta}\cdot\frac{1}{\cos(\frac{\xi(\zeta)}{2})}d\zeta, \quad \epsilon=2\sin\left(\frac{\delta}{2}\right), \; \xi(\zeta)=2\arcsin\left(\frac{\zeta}{2}\right).
			\end{equation*}
			Next, we take $\eta:=t\zeta$, $2N\pi < t\epsilon \le (2N+1)\pi$, $N\in \mathbb{Z}^{+}$, and decompose the integral again as follows.
			\begin{equation}\label{3.8}
				\frac{1}{2\pi}\int_{-t\epsilon}^{t\epsilon}\frac{\sin(\eta)}{\eta}\cdot\frac{1}{\cos(\arcsin(\frac{\eta}{2t}))}d\eta=\frac{1}{2\pi}\int_{-2N\pi }^{2N\pi }\;\;+\;\;\frac{1}{2\pi}\int_{2N\pi }^{t\epsilon}\;\;+\;\;\frac{1}{2\pi}\int_{-t\epsilon}^{-2N\pi}
			\end{equation}
			\begin{equation*}
				:=J_{1}+J_{2}+J_{3}.
			\end{equation*}
			Notice that, the denominator $\eta\cdot\cos(\arcsin(\frac{\eta}{2t}))$ is an increasing function of $\eta$, then we have
			\begin{equation*}
				|J_{2}|\le \frac{1}{2\pi}\int_{2N\pi }^{(2N+1)\pi }\frac{\sin(\eta)}{\eta}\cdot\frac{1}{\cos(\arcsin(\frac{\eta}{2t}))}d\eta\lesssim \frac{1}{N}.
			\end{equation*}
			Similarly, $|J_{3}|\lesssim \frac{1}{N}$. Then letting $t\to +\infty$, $N\to +\infty$, we can derive that
			\begin{equation*}
				\frac{1}{2\pi}\int_{-\delta}^{\delta}\frac{\sin(t\omega(\xi))}{\omega(\xi)}d\xi\longrightarrow\frac{1}{2\pi}\int_{-\infty}^{+\infty}\frac{\sin(\eta)}{\eta}d\eta=\frac{1}{2},\quad t\to +\infty.
			\end{equation*}
			Thus, we have proved the absence of decay in $1$ dimension.
		\end{rem}
		\begin{rem}
			Although there has no decay for general $1$ dimensional DW, we do have some decay estimate for special cases. To be more specific, we can consider the following DW with vanishing velocity. 
			\begin{equation}\label{112}
				\left\{
				\begin{aligned}
					& \partial_{t}^{2} u(x,t)-\Delta u(x,t) =0,  \\
					& u(x,0)=f(x), \partial_{t}u(x,0)=0,\quad (x,t)\in \mathbb{Z}^d\times \mathbb{R}.
				\end{aligned}
				\right.
			\end{equation}
			Notice that the solution $u$ of (\ref{112}) can be expressed as $u=H(\cdot,t)\ast f$, where
			\begin{equation*}
				H(x,t):=\frac{1}{2\pi}\int_{\mathbb{T}}e^{ix\cdot\xi}\cos(2t\sin(\frac{\xi}{2}))d\xi.
			\end{equation*}
			Changing the variable $\theta:=\frac{\xi}{2}$, we see 
			\begin{equation}\label{Bessel}
				H(x,t)=\frac{1}{2\pi}\int_{-\pi}^{\pi}e^{-2it(p\theta-\sin(\theta))}d\theta:=\frac{1}{2\pi}\int_{-\pi}^{\pi}e^{-2it\phi(p,\theta)}d\theta, \quad x=pt.
			\end{equation}
			Simple calculation shows that degenerate points of the phase function $\phi(p,\theta)$ are $\theta=0,\pm\pi$, and $\frac{\partial^{3}}{\partial \theta^{3}}\phi(p,\theta)\Big|_{\theta=0,\pm\pi}=1>0$. Using the Van der Corput lemma, we can get
			\begin{equation}\label{-1/3}
				\|H(\cdot,t)\|_{\ell^{\infty}(\mathbb{Z})}=\sup_{x\in \mathbb{Z}}|H(x,t)|\lesssim (1+|t|)^{-\frac{1}{3}}.
			\end{equation}	
			On the other hands, we obviously have
			\begin{equation*}
				\|H(\cdot,t)\|_{\ell^{2}(\mathbb{Z})}=\frac{1}{2\pi}\int_{\mathbb{T}}\cos^{2}(2t\sin(\frac{\xi}{2}))d\xi\lesssim 1.
			\end{equation*}
			Then we can directly get 
			\begin{equation*}
				\|u(\cdot,t)\|_{\ell^{r}(\mathbb{Z})}\lesssim (1+|t|)^{-\frac{1}{3}(1-\frac{2}{k})}\|f\|_{\ell^{q}(\mathbb{Z})}, \quad 1+\frac{1}{r}=\frac{1}{k}+\frac{1}{q},
			\end{equation*}
			where $k\ge2$, $1\le r,q\le \infty$.
			
			It is worth to mention that $H(x,t)=J_{2|x|}(2t)$, where $J_{v}(t)$ is the Bessel function. Classically, we have the following asymptotic \cite{20}
				\begin{equation}\label{89}
					J_{v}(t)=\sqrt{\frac{2}{\pi t}}\cos(t-\frac{\pi v}{2}-\frac{\pi}{4})+R_{v}(t),\quad Re(v)>-\frac{1}{2}, t\ge1,
				\end{equation}
				where the remainder $R_{v}$ satisfies $|R_{v}(t)|\le C_{v}t^{-\frac{3}{2}}$, $C_{v}$ depends on $v$.
			
			Although $H(x,t)$ has decay rate $t^{-\frac{1}{2}}$, we can't refine the power $-\frac{1}{3}$ in the uniform decay estimate (\ref{-1/3}) to $-\frac{1}{2}$. Since the constant $C_{v}$ in the asymptotic (\ref{89}) is not uniform in $v$.
		\end{rem}
		\section{Proof of Theorem \ref{S1}, \ref{S1.5}, \ref{S2}, \ref{S3}}
		The Strichartz estimate is a very important tool in the dispersive equations, which also has a deep connection with the restriction \cite{29,30}. In this section, we will first prove the Strichartz estimate of DW, and establish the scattering theory again.
		
		We shall mention that the ideas here are similar with the ideas in \cite{18}, where the author used to establish the scattering theory of the discrete nonlinear Schr\"{o}dinger equations. However, there still have some differences with \cite{18}, due to the complexity of the wave equation.
		
		\subsection{The Strichartz estimate of DW}
		Considering the following inhomogeneous DW:
		\begin{equation}\label{pi}
			\left\{
			\begin{aligned}
				& \partial_{t}^{2} u(x,t)-\Delta u(x,t) =F(x,t),  \\
				& u(x,0) = f(x), \partial_{t}u(x,0)=g(x),\quad (x,t)\in \mathbb{Z}^d\times \mathbb{R}.
			\end{aligned}
			\right.
		\end{equation}
		From the classical results in \cite{28}, we can derive the following Strichartz estimate
		\begin{thm}\label{Strichartz}
			For any $\beta_{d}$-admissible pairs $(q,r)$, $(\widetilde{q}, \widetilde{r})$, we have
			\begin{equation}\label{11}
				\|e^{it\sqrt{-\Delta}}v\|_{L_{t}^{q}\ell^{r}_{x}(\mathbb{R}\times \mathbb{Z}^{d})}\lesssim \|v\|_{\ell^{2}(\mathbb{Z}^{d})}, \quad \forall t\in \mathbb{R},
			\end{equation}
			\begin{equation}\label{12}
				\left\|\int_{0}^{t}e^{i(t-s)\sqrt{-\Delta}}G(s,\cdot)ds\right\|_{L_{t}^{q}\ell^{r}_{x}(\mathbb{R}\times\mathbb{Z}^{d})}\lesssim \|G\|_{L_{t}^{\widetilde{q}^{'}}\ell_{x}^{\widetilde{r}^{'}}(\mathbb{R}\times\mathbb{Z}^{d})}.
			\end{equation}
			In particular, if $u$ is the solution of the equation $(\ref{pi})$, then we can deduce that
			\begin{equation}\label{12.5}
				\|u\|_{L_{t}^{q}\ell_{x}^{r}(\mathbb{R}\times \mathbb{Z}^{d})}\lesssim \|f\|_{\ell^{2}(\mathbb{Z}^{d})}+\|g\|_{\ell^{\frac{2d}{d+2}}(\mathbb{Z}^{d})}+\Big\|F\Big\|_{L_{t}^{\widetilde{q}^{'}}\ell_{x}^{\frac{d\widetilde{r}^{'}}{d+\widetilde{r}^{'}}}(\mathbb{R}\times\mathbb{Z}^{d})}
			\end{equation}
			\begin{equation}\label{13}
				\|u\|_{L_{t}^{q}\dot{W}_{x}^{1,r}(\mathbb{R}\times \mathbb{Z}^{d})}\lesssim \|f\|_{\dot{H}^{1}(\mathbb{Z}^{d})}+\|g\|_{\ell^{2}(\mathbb{Z}^{d})}+\|F\|_{L_{t}^{\widetilde{q}^{'}}\ell_{x}^{\widetilde{r}^{'}}(\mathbb{R}\times\mathbb{Z}^{d})},
			\end{equation}
			\begin{equation}\label{14}
				\|\partial_{t}u\|_{L_{t}^{q}\ell_{x}^{r}(\mathbb{R}\times \mathbb{Z}^{d})}\lesssim \|f\|_{\dot{H}^{1}(\mathbb{Z}^{d})}+\|g\|_{\ell^{2}(\mathbb{Z}^{d})}+\|F\|_{L_{t}^{\widetilde{q}^{'}}\ell_{x}^{\widetilde{r}^{'}}(\mathbb{R}\times\mathbb{Z}^{d})}.
			\end{equation}
		\end{thm}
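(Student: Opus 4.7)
The plan is to reduce the whole theorem to the abstract Keel--Tao machinery from \cite{28}, followed by a direct manipulation of the wave equation's fundamental solution. The essential input is a dispersive estimate for the half-wave propagator $e^{it\sqrt{-\Delta}}$, which will drive the Strichartz bounds, together with a discrete Hardy--Littlewood--Sobolev estimate that handles the factor $(\sqrt{-\Delta})^{-1}$ appearing in Duhamel's formula.

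First I would verify the dispersive estimate
\[
\|e^{it\sqrt{-\Delta}} v\|_{\ell^{\infty}(\mathbb{Z}^d)} \lesssim (1+|t|)^{-\beta_d} \|v\|_{\ell^{1}(\mathbb{Z}^d)}, \quad 2 \le d \le 5.
\]
The convolution kernel is $(2\pi)^{-d}\int_{\mathbb{T}^d} e^{i(x\cdot\xi + t\omega(\xi))}\,d\xi$, i.e.\ essentially the same oscillatory integral analysed in Section 2 but with smooth amplitude $1$ in place of $\omega(\xi)^{-1}$. The phase-function analysis (classification of degenerate points of $\omega$ on $\Gamma_1,\dots,\Gamma_d$ and the Newton-polyhedron bounds of Theorem \ref{uniform}) carries over verbatim, and the estimate is in fact easier because there is no singularity near $\xi=0$ to cut out. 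Combined with the trivial $\ell^2$-unitarity $\|e^{it\sqrt{-\Delta}} v\|_{\ell^2}=\|v\|_{\ell^2}$, the Keel--Tao theorem with decay exponent $\beta_d$ then yields (\ref{11}) for every $\beta_d$-admissible $(q,r)$ and the inhomogeneous bound (\ref{12}) for any two such pairs.

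To derive (\ref{12.5})--(\ref{14}) for the inhomogeneous wave equation (\ref{pi}), I would use the explicit representation
\[
u(t) = \cos(t\sqrt{-\Delta})\,f + \tfrac{\sin(t\sqrt{-\Delta})}{\sqrt{-\Delta}}\,g + \int_0^t \tfrac{\sin((t-s)\sqrt{-\Delta})}{\sqrt{-\Delta}}\,F(s,\cdot)\,ds,
\]
writing each trigonometric factor as a combination of $e^{\pm it\sqrt{-\Delta}}$. The $\cos$ pieces fall under (\ref{11})--(\ref{12}) directly. For the $\sin/\sqrt{-\Delta}$ pieces, the multiplier $(\sqrt{-\Delta})^{-1}$ commutes with the propagator and can be absorbed into the data via the discrete Hardy--Littlewood--Sobolev estimate
\[
\|(\sqrt{-\Delta})^{-1} h\|_{\ell^2(\mathbb{Z}^d)} \lesssim \|h\|_{\ell^{\frac{2d}{d+2}}(\mathbb{Z}^d)},
\]
which accounts for the $\ell^{2d/(d+2)}$-norm of $g$ and the $\ell_x^{d\widetilde{r}'/(d+\widetilde{r}')}$-norm of $F$ in (\ref{12.5}). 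For (\ref{13}) I would apply $\sqrt{-\Delta}$ to the formula, which cancels all the singular factors and reduces to (\ref{11})--(\ref{12}) with data $\sqrt{-\Delta}f\in\ell^2$ (equivalently $f\in\dot H^1$) and $g\in\ell^2$; the time-derivative bound (\ref{14}) is identical after differentiating in $t$.

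The main obstacle, and the only nontrivial verification, is the discrete HLS step for the lattice Riesz potential $(\sqrt{-\Delta})^{-1}$. The multiplier $\omega(\xi)^{-1}$ behaves like $|\xi|^{-1}$ near the origin and is smooth elsewhere on $\mathbb{T}^d$, so the continuous HLS relation $1/q=1/p-1/d$ is expected to persist; however, the proof requires analysing the decay of the convolution kernel $\mathcal{F}^{-1}(\omega(\xi)^{-1})$ on $\mathbb{Z}^d$ and running the standard weak-type plus interpolation argument in the lattice setting. The Hörmander--Mikhlin multiplier Theorem \ref{muti} handles only scale-invariant multipliers and does not cover this fractional endpoint, so the Riesz-potential bound must be established separately. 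The case $d=2$ is particularly delicate since $\frac{2d}{d+2}=1$ is precisely the HLS endpoint, and a weak-type or atomic-decomposition argument would be needed to close it cleanly.
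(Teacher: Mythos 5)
Your proposal follows essentially the same route as the paper's proof: the paper likewise verifies the two Keel--Tao hypotheses from \cite{28} (the $\ell^{2}$ bound and the dispersive bound $\|V(s)V(t)^{\ast}f\|_{\ell^{\infty}(\mathbb{Z}^{d})}\lesssim(1+|t-s|)^{-\beta_{d}}\|f\|_{\ell^{1}(\mathbb{Z}^{d})}$ supplied by the Section 2 decay estimates) to obtain (\ref{11})--(\ref{12}), and then uses the same Duhamel formulas for $u$ and $\partial_{t}u$, absorbing the $(\sqrt{-\Delta})^{-1}$ factors through the discrete Sobolev embedding $\|(\sqrt{-\Delta})^{-1}h\|_{\ell^{2}(\mathbb{Z}^{d})}\lesssim\|h\|_{\ell^{2d/(d+2)}(\mathbb{Z}^{d})}$ cited from \cite{24}, exactly where you invoke a discrete Hardy--Littlewood--Sobolev bound, and obtains (\ref{13})--(\ref{14}) by applying $\sqrt{-\Delta}$, respectively by the Duhamel formula for $\partial_{t}u$. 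The two points you single out---that the dispersive estimate is needed for the amplitude-one kernel rather than the $\omega(\xi)^{-1}$ kernel of Section 2, and that the Riesz-potential step is not covered by the Mikhlin-type Theorem \ref{muti}---are genuine, but the paper disposes of them only via Remark \ref{rem} and the citation of \cite{24}, so your plan is simply a more explicit account of the same ingredients.
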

		\begin{proof}
		    Let $V(t):=e^{it\sqrt{-\Delta}}$, we can easily verify the following assumptions in \cite{28}, from the uniform decay estimate in Section 2.
			\begin{itemize}
				\item $\|V(t)f\|_{\ell^{2}(\mathbb{Z}^{d})}\lesssim \|f\|_{\ell^{2}(\mathbb{Z}^{d})}$, \quad $\forall t \in \mathbb{R}$.
				\item $\|V(s)\left(V(t)\right)^{\ast}f\|_{\ell^{\infty}(\mathbb{Z}^{d})}\lesssim (1+|t-s|)^{-\beta_{d}}\|f\|_{\ell^{1}(\mathbb{Z}^{d})}$, \quad $t,s\in \mathbb{R}$.
			\end{itemize}
			Thus, we get the estimates (\ref{11}) and (\ref{12}). To control the solution $u$, we can use the following Duhamel formulas
			\begin{equation*}
				u(t)=\cos(t\sqrt{-\Delta})f+\frac{\sin(t\sqrt{-\Delta})}{\sqrt{-\Delta}}g+\int_{0}^{t}\frac{\sin((t-s)\sqrt{-\Delta})}{\sqrt{-\Delta}}F(s)ds,
			\end{equation*} 
			\begin{equation*}
				\partial_{t}u(t)=-\sin(t\sqrt{-\Delta})\cdot\sqrt{-\Delta}f+\cos(t\sqrt{-\Delta})g+\int_{0}^{t}\cos((t-s)\sqrt{-\Delta})F(s)ds.
			\end{equation*}
			For (\ref{12.5}), we can use the Sobolev embedding for the discrete Sobolev spaces (e.g.\cite{24}) and get
			\begin{equation*}
				\|u\|_{L_{t}^{q}\ell_{x}^{1,r}(\mathbb{R}\times \mathbb{Z}^{d})}\lesssim 
				\|f\|_{\ell^{2}(\mathbb{Z}^{d})}+\|\frac{1}{\sqrt{-\Delta}}g\|_{\ell^{2}(\mathbb{Z}^{d})}+\|\frac{1}{\sqrt{-\Delta}}F\|_{L_{t}^{\widetilde{q}}\ell_{x}^{\widetilde{r}}(\mathbb{R}\times\mathbb{Z}^{d})}
			\end{equation*}
			\begin{equation*}
				\lesssim\|f\|_{\ell^{2}(\mathbb{Z}^{d})}+\|g\|_{\ell^{\frac{2d}{d+2}}(\mathbb{Z}^{d})}+\Big\|F\Big\|_{L_{t}^{\widetilde{q}^{'}}\ell_{x}^{\frac{d\widetilde{r}^{'}}{d+\widetilde{r}^{'}}}(\mathbb{R}\times\mathbb{Z}^{d})}.
			\end{equation*}
			For (\ref{13}), we directly have
			\begin{equation*}
				\|u\|_{L_{t}^{q}\dot{W}_{x}^{1,r}(\mathbb{R}\times \mathbb{Z}^{d})}=\|\sqrt{-\Delta}u\|_{L_{t}^{q}\ell_{x}^{r}(\mathbb{R}\times\mathbb{Z}^{d})}\lesssim  \|f\|_{\dot{H}^{1}(\mathbb{Z}^{d})}+\|g\|_{\ell^{2}(\mathbb{Z}^{d})}+\|F\|_{L_{t}^{\widetilde{q}^{'}}\ell_{x}^{\widetilde{r}^{'}}(\mathbb{R}\times\mathbb{Z}^{d})}.
			\end{equation*}
			Parallelly, we can get (\ref{14}), which has collected all the Strichartz estimate we need.
		\end{proof}
		\begin{rem}
			With tiny modification of the proof above, we can replace $\mathbb{R}$ with any time interval $I\subseteq \mathbb{R}$. Besides, we can use the Strichartz norms in Definition \ref{SS} to unify the estimates (\ref{13}) and (\ref{14}) as follows.
			\begin{equation}\label{Uniform S}
				\|\widetilde{u}\|_{S(\mathbb{R}\times \mathbb{Z}^{d})}\le C_{0}\left(\|\widetilde{u}(0)\|_{X}+\|F\|_{N^{0}(\mathbb{R}\times\mathbb{Z}^{d})}\right), \quad \widetilde{u}=(u,\partial_{t}u).
			\end{equation}
		\end{rem}
		\vspace{10pt}
		\subsection{Proof of Theorem \ref{S1}, \ref{S1.5}}
		\begin{proof}[Proof of Theorem \ref{S1}]
			
			In order to find the solution $\widetilde{u}$ for some given $F_{-}$, we consider the integral equation ($\ast_{-\infty},F_{-}$) on $(-\infty,-T]$, where $T\gg1 $. Then we can use the continuation principle, and extend it to $\mathbb{R}$.
			
			As $p\ge 1+\frac{2}{\beta_{d}}+\frac{2}{d}$, it's not very difficult to verify the existence of $0<q_{i}, r_{i}<\infty, i=1,2,3,4,5$, satisfying
			\begin{equation*}
				\frac{1}{q_{4}}+\frac{1}{q_{5}}=\frac{1}{q_{3}'}, \quad \frac{1}{r_{4}}+\frac{1}{r_{5}}=\frac{1}{r_{3}'}+\frac{1}{d},
			\end{equation*} 
			\begin{equation*}
				q_{1}=(p-1)q_{4}, \quad r_{1}=(p-1)r_{4},
			\end{equation*}
			where $(q_{1},r_{1}), (q_{2},r_{2}), (q_{3},r_{3}), (q_{5},r_{5}), (pq_{2}',p\frac{dr_{2}^{'}}{d+r_{2}^{'}})$ are all $\beta_{d}$-admissible. In fact, we can take
			\begin{equation*}
				\frac{1}{q_{1}}=\frac{1}{p-1}\cdot\left(1-\frac{\sigma_{d}}{2}\right),\quad  \frac{1}{r_{1}}=\frac{1}{2}-\frac{1}{(p-1)\sigma_{d}}\cdot\left(1-\frac{\sigma_{d}}{2}\right),
			\end{equation*} 
			\begin{equation*}
				\frac{1}{q_{3}}=\frac{1}{q_{5}}=\frac{\sigma_{d}}{4}, \quad \frac{1}{r_{3}}=\frac{1}{r_{5}}=\frac{1}{4},\quad \frac{1}{q_{2}}=\frac{1}{r_{2}}=\frac{\beta_{d}}{2(\beta_{d}+1)},
			\end{equation*}
			\begin{equation*}
				\frac{1}{q_{4}}=1-\frac{\sigma_{d}}{2},\quad \frac{1}{r_{4}}=\frac{1}{2}+\frac{1}{d},
			\end{equation*}
			where $\sigma_{d}:=\frac{2}{p-1-\frac{2}{d}}$. Simple calculation shows that 
			\begin{equation*}
				p\ge 1+\frac{2}{\beta_{d}}+\frac{2}{d}\Longleftrightarrow \beta_{d}\ge \sigma_{d}.
			\end{equation*}
			Then we introduce the following norm
			\begin{equation*}
				\|V\|_{S_{0}(I\times \mathbb{Z}^{d})}:=\|V\|_{L_{t}^{q_{1}}\ell_{x}^{r_{1}}(I\times\mathbb{Z}^{d})}+\Big\|V\Big\|_{L_{t}^{pq_{2}^{'}}\ell_{x}^{p\frac{dr_{2}^{'}}{d+r_{2}^{'}}}(I\times\mathbb{Z}^{d})},
			\end{equation*}
			where $V=(v_{1},v_{2})$, $\|V\|_{L_{t}^{q}\ell_{x}^{r}(I\times\mathbb{Z}^{d})}:=\|v_{1}\|_{L_{t}^{q}\ell_{x}^{r}(I\times\mathbb{Z}^{d})}+\|v_{2}\|_{L_{t}^{q}\ell_{x}^{r}(I\times\mathbb{Z}^{d})}$.
			
			From the Strichartz estimate, we deduce that 
			\begin{equation*}
				\|U_{0}(t)F_{-}\|_{S_{0}((-\infty,-T]\times \mathbb{Z}^{d})}\lesssim \|U_{0}(t)F_{-}\|_{S^{0}((-\infty,-T]\times \mathbb{Z}^{d})}<+\infty, \quad U_{0}(t):=\begin{bmatrix}
					\cos(t\sqrt{-\Delta}) & \dfrac{\sin(t\sqrt{-\Delta})}{\sqrt{-\Delta}} \\
					-\sin(t\sqrt{-\Delta})\cdot \sqrt{-\Delta} & \cos(t\sqrt{-\Delta})
				\end{bmatrix}.
			\end{equation*}
			Then we can let $T$ sufficiently large, such that $\|U_{0}(t)F_{-}\|_{S_{0}((-\infty,-T]\times \mathbb{Z}^{d})}\le \epsilon$, where $\epsilon>0$ will be determined later. Next, we consider the following iteration.
			\begin{equation*}
				\widetilde{u}_{n+1}(t)=U_{0}(t)F_{-}-\int_{-\infty}^{t}U_{0}(t-s)\begin{bmatrix}
					0\\
					|u_{n}(s)|^{p-1}u_{n}(s)
				\end{bmatrix}ds:=U_{0}(t)F_{-}+DN(\widetilde{u}_{n}),
			\end{equation*}
			where the operators $D,N$ are defined as follows.
			\begin{equation*}
				D:\begin{bmatrix}
					F_{1}\\
					F_{2}
				\end{bmatrix}\mapsto \int_{-\infty}^{t}\begin{bmatrix}
					\cos((t-s)\sqrt{-\Delta}) & \dfrac{\sin((t-s)\sqrt{-\Delta})}{\sqrt{-\Delta}} \\
					-\sin((t-s)\sqrt{-\Delta})\cdot \sqrt{-\Delta} & \cos((t-s)\sqrt{-\Delta})
				\end{bmatrix}
				\begin{bmatrix}
					F_{1}\\
					F_{2}
				\end{bmatrix}ds,
			\end{equation*}
			\begin{equation*}
				N: \begin{bmatrix}
					F_{1}\\
					F_{2}
				\end{bmatrix}\mapsto  
				\begin{bmatrix}
					0\\
					-|F_{1}|^{p-1}F_{1}
				\end{bmatrix}.
			\end{equation*}
			Then in the order $(P_{n})\Rightarrow \widetilde{(P_{n})}\Rightarrow(P_{n+1})$, we can use the induction to prove the following two propositions
			\begin{equation*}
				(P_{n}): \quad \|\widetilde{u}_{n}\|_{S_{0}(\left(-\infty,-T\right]\times\mathbb{Z}^{d})}\le 10\epsilon,
			\end{equation*}
			\begin{equation*}
				\widetilde{(P_{n})}: \quad \|N(\widetilde{u}_{n})\|_{L_{t}^{q_{2}'}\ell_{x}^{r_{2}'}(\left(-\infty,-T\right]\times \mathbb{Z}^{d})}=\||u_{n}|^{p-1}u_{n}\|_{L_{t}^{q_{2}'}\ell_{x}^{r_{2}'}(\left(-\infty,-T\right]\times \mathbb{Z}^{d})}\le \frac{\epsilon}{10C_{0}},
			\end{equation*}
			where $C_{0}$ is the constant in (\ref{Uniform S}).
			
			Using the H\"{o}lder inequality, we can derive that
			\begin{equation*}
				\Big\|N(\widetilde{u}_{n+1})-N(\widetilde{u}_{n})\Big\|_{L_{t}^{q_{3}^{'}}\ell_{x}^\frac{dr_{3}^{'}}{d+r_{3}^{'}}((-\infty,-T]\times \mathbb{Z}^{d})}\lesssim  \|u_{n+1}-u_{n}\|_{L_{t}^{q_{5}}\ell_{x}^{r_{5}}(-\infty,-T]\times \mathbb{Z}^{d})}
			\end{equation*}
			\begin{equation*}
				\times \left(\|u_{n+1}\|_{L_{t}^{q_{1}}\ell_{x}^{r_{1}}((-\infty,-T]\times \mathbb{Z}^{d})}^{p-1}+\|u_{n}\|_{L_{t}^{q_{1}}\ell_{x}^{r_{1}}((-\infty,-T]\times \mathbb{Z}^{d})}^{p-1}\right)\lesssim \epsilon^{p}\|\widetilde{u}_{n+1}-\widetilde{u}_{n}\|_{S^{0}((-\infty,-T]\times \mathbb{Z}^{d}))}.
			\end{equation*}
			Then we can take $\epsilon\ll \frac{1}{C_{0}}$ and ensure that
			\begin{equation*}
				\Big\|N(\widetilde{u}_{n+1})-N(\widetilde{u}_{n})\Big\|_{L_{t}^{q_{3}^{'}}\ell_{x}^\frac{dr_{3}^{'}}{d+r_{3}^{'}}((-\infty,-T]\times \mathbb{Z}^{d})}\le \frac{1}{2C_{0}}\|\widetilde{u}_{n+1}-\widetilde{u}_{n}\|_{S^{0}((-\infty,-T]\times \mathbb{Z}^{d}))}.
			\end{equation*}
			Combing the Strichartz estimate, we can get
			\begin{equation*}
				\|\widetilde{u}_{n+1}-\widetilde{u}_{n}\|_{S^{0}(\left(-\infty,-T\right]\times \mathbb{Z}^{d})}\le \|DN(\widetilde{u}_{n})-DN(\widetilde{u}_{n-1})\|_{S^{0}(\left(-\infty,-T\right]\times \mathbb{Z}^{d})}
			\end{equation*}
			\begin{equation*}
				\le C_{0}\Big\|N(\widetilde{u}_{n})-N(\widetilde{u}_{n-1})\Big\|_{L_{t}^{q_{3}^{'}}\ell_{x}^\frac{dr_{3}^{'}}{d+r_{3}^{'}}((-\infty,-T]\times \mathbb{Z}^{d})}\le \frac{1}{2}\|\widetilde{u}_{n}-\widetilde{u}_{n-1}\|_{S^{0}(\left(-\infty,-T\right]\times \mathbb{Z}^{d})}.
			\end{equation*}
			Thus, $\lbrace\widetilde{u}_{n}\rbrace$ is a Cauchy sequence in $S^{0}(\left(-\infty,-T\right]\times \mathbb{Z}^{d})$, with limit $\widetilde{u}$ as the solution of the equation ($\ast_{-\infty},F_{-}$).
			
			To illustrate the energy conservation (\ref{conservation}), we notice that $\widetilde{u}\in C(\mathbb{R};X)$. Then we can use the same method in \cite{25} to derive
			\begin{equation*}
				\frac{1}{2}\|\widetilde{u}(t)\|_{X}^{2}+\frac{1}{p+1}\|u(t)\|_{\ell^{p+1}(\mathbb{Z}^{d})}^{p+1} \equiv \frac{1}{2}\|\widetilde{u}(0)\|_{X}^{2}+\frac{1}{p+1}\|u(0)\|_{\ell^{p+1}(\mathbb{Z}^{d})}^{p+1}, 	\quad \forall t\in \mathbb{R}.
			\end{equation*}
			From the definition of the Strichartz norm, we see that $u\in L_{t}^{\frac{2(p+1)}{\beta_{d}(p-1)}}\ell_{x}^{p+1}((-\infty,-T]\times\mathbb{Z}^{d})$, then we can take a subsequence $\lbrace t_{j}\rbrace$, such that $\|u(t_{j})\|_{\ell^{P+1}(\mathbb{Z}^{d})}\to 0, t_{j}\to -\infty$, when $j\to +\infty$. Then we have
			\begin{equation*}
				\frac{1}{2}\|\widetilde{u}(t)\|_{X}^{2}+\frac{1}{p+1}\|u(t)\|_{\ell^{p+1}(\mathbb{Z}^{d})}^{p+1}=\lim_{j\to \infty}\frac{1}{2}\|\widetilde{u}(t_{j})\|_{X}^{2}+\frac{1}{p+1}\|u(t_{j})\|_{\ell^{p+1}(\mathbb{Z}^{d})}^{p+1}=\frac{1}{2}\|F_{-}\|_{X}^{2}.
			\end{equation*}
			Similarly, we can derive the same conclusion for $F_{+}$.
		\end{proof}
		\vspace{5pt}
		For the next proof, we just list some important parameters and steps, since it's just a modification of the proof above.
		\begin{proof}[Proof of Theorem \ref{S1.5}]
			Since $p\ge \frac{d(\beta_{d}+2)}{\beta_{d}(d-2)}$, $d\ge 3$, we can find $0<q_{i}, r_{i}<\infty, i=1,2,3,4,5$, satisfying
			\begin{equation*}
				\frac{1}{q_{4}}+\frac{1}{q_{5}}=\frac{1}{q_{3}'}, \quad \frac{1}{r_{4}}+\frac{1}{r_{5}}=\frac{1}{r_{3}'}, 
			\end{equation*} 
			\begin{equation*}
				\frac{1}{q_{4}}=\frac{p-1}{q_{1}}, \quad \frac{1}{r_{4}}+\frac{p-1}{d}=\frac{p-1}{r_{1}},
			\end{equation*}
			where $(q_{1},r_{1}), (q_{2},r_{2}), (q_{3},r_{3}), (q_{5},\frac{dr_{5}}{d+r_{5}}), (pq_{2}',\frac{dpr_{2}^{'}}{d+pr_{2}^{'}})$ are all $\beta_{d}$-admissible. In fact, we can take
			\begin{equation*}
				\frac{1}{q_{1}}=\sigma_{d}\left(\frac{1}{2}-\frac{1}{d}-\frac{1}{2(p-1)}\right),\quad  \frac{1}{r_{1}}=\frac{1}{d}+\frac{1}{2(p-1)},
			\end{equation*} 
			\begin{equation*}
				\frac{1}{q_{3}}=\frac{\sigma_{d}}{8}, \quad \frac{1}{r_{3}}=\frac{3}{8},\quad \frac{1}{q_{2}}=\frac{1}{r_{2}}=\frac{\beta_{d}}{2(\beta_{d}+1)},
			\end{equation*}
			\begin{equation*}
				\frac{1}{q_{5}}=\sigma_{d}\left(\frac{3}{8}-\frac{1}{d}\right), \quad \frac{1}{r_{5}}=\frac{1}{8},\quad \frac{1}{q_{4}}=1-\sigma_{d}\left(\frac{1}{2}-\frac{1}{d}\right), \quad \frac{1}{r_{4}}=\frac{1}{2},
			\end{equation*}
			where $\sigma_{d}:=\frac{1}{p(\frac{1}{2}-\frac{1}{d})-\frac{1}{2}}$. Similarly, we have
			\begin{equation*}
				p\ge \frac{d(\beta_{d}+2)}{\beta_{d}(d-2)}\Longleftrightarrow \beta_{d}\ge \sigma_{d}.
			\end{equation*}
			Then for any time interval $I\in \mathbb{R}$, we introduce the following norm.
			\begin{equation*}
				\|V\|_{S_{0}(I\times\mathbb{Z}^{d})}:=\|v_{1}\|_{L_{t}^{q_{1}}\dot{W}_{x}^{1,r_{1}}(I\times\mathbb{Z}^{d})}+\Big\|v_{1}\Big\|_{L_{t}^{pq_{2}^{'}}\dot{W}_{x}^{1,\frac{dpr_{2}^{'}}{d+pr_{2}^{'}}}(I\times\mathbb{Z}^{d})}
			\end{equation*}
			\begin{equation*}
				+\|v_{2}\|_{L_{t}^{q_{1}}\ell_{x}^{r_{1}}(I\times \mathbb{Z}^{d})}+\Big\|v_{2}\Big\|_{L_{t}^{pq_{2}^{'}}\ell_{x}^{\frac{dpr_{2}^{'}}{d+pr_{2}^{'}}}(I\times \mathbb{Z}^{d})}.
			\end{equation*}
			In the order $(P_{n})\Rightarrow \widetilde{(P_{n})}\Rightarrow(P_{n+1})$, we can use the induction to prove the two propositions below.
			\begin{equation*}
				(P_{n}): \quad \|\widetilde{u}_{n}\|_{S_{0}(\left(-\infty,-T\right]\times\mathbb{Z}^{d})}\le 10\epsilon,
			\end{equation*}
			\begin{equation*}
				\widetilde{(P_{n})}: \quad \|N(\widetilde{u}_{n})\|_{L_{t}^{q_{2}'}\ell_{x}^{r_{2}'}(\left(-\infty,-T\right]\times \mathbb{Z}^{d})}=\||u_{n}|^{p-1}u_{n}\|_{L_{t}^{q_{2}'}\ell_{x}^{r_{2}'}(\left(-\infty,-T\right]\times \mathbb{Z}^{d})}\le \frac{\epsilon}{10C_{0}}.
			\end{equation*}
			Applying the H\"{o}lder inequality and the Sobolev embedding, we can also show that
			\begin{equation*}
				\|N(\widetilde{u}_{n+1})-N(\widetilde{u}_{n})\|_{N((-\infty,-T]\times \mathbb{Z}^{d})}\le\|N(\widetilde{u}_{n+1})-N(\widetilde{u}_{n})\|_{L_{t}^{q_{3}^{'}}\ell_{x}^{r_{3}^{'}}((-\infty,-T]\times \mathbb{Z}^{d})}\lesssim  \|u_{n+1}-u_{n}\|_{L_{t}^{q_{5}}\ell_{x}^{r_{5}}(-\infty,-T]\times \mathbb{Z}^{d})}
			\end{equation*}
			\begin{equation*}
				\times \left(\|u_{n+1}\|_{L_{t}^{q_{1}}\ell_{x}^{r_{1}}((-\infty,-T]\times \mathbb{Z}^{d})}^{p-1}+\|u_{n}\|_{L_{t}^{q_{1}}\ell_{x}^{r_{1}}((-\infty,-T]\times \mathbb{Z}^{d})}^{p-1}\right)\lesssim \epsilon^{p}\|\widetilde{u}_{n+1}-\widetilde{u}_{n}\|_{S((-\infty,-T]\times \mathbb{Z}^{d}))}.
			\end{equation*}
			Taking $\epsilon\ll \frac{1}{C_{0}}$, we can ensure that
			\begin{equation*}
				\|N(\widetilde{u}_{n+1})-N(\widetilde{u}_{n})\|_{N((-\infty,-T]\times \mathbb{Z}^{d})}\le \frac{1}{2C_{0}}\|\widetilde{u}_{n+1}-\widetilde{u}_{n}\|_{S((-\infty,-T]\times \mathbb{Z}^{d}))}.
			\end{equation*}
			Applying the Strichartz estimate again, we finally get
			\begin{equation*}
				\|\widetilde{u}_{n+1}-\widetilde{u}_{n}\|_{S(\left(-\infty,-T\right]\times \mathbb{Z}^{d})}\le \|DN(\widetilde{u}_{n})-DN(\widetilde{u}_{n-1})\|_{S(\left(-\infty,-T\right]\times \mathbb{Z}^{d})}
			\end{equation*}
			\begin{equation*}
				\le C_{0}\|N(\widetilde{u}_{n})-N(\widetilde{u}_{n-1})\|_{N((-\infty,-T]\times \mathbb{Z}^{d})}\le \frac{1}{2}\|\widetilde{u}_{n}-\widetilde{u}_{n-1}\|_{S(\left(-\infty,-T\right]\times \mathbb{Z}^{d})}.
			\end{equation*}
			Thus, $\lbrace\widetilde{u}_{n}\rbrace$ is a Cauchy sequence in $S(\left(-\infty,-T\right]\times \mathbb{Z}^{d})$, with limit $\widetilde{u}$ as the solution of equation ($\ast_{-\infty},F_{-}$).
		\end{proof}
		\vspace{10pt}
		\subsection{Proof of Theorem \ref{S2}, \ref{S3}}
		From the following Duhamel formula of DNLW (\ref{DNLW})
		\begin{equation*}
			\widetilde{u}(t)=U_{0}(t)F_{0}+\mu\int_{0}^{t}U_{0}(t-s)N(\widetilde{u}(s))ds, \quad F_{0}=(f,g),
		\end{equation*}
		the asymptotic completeness is equivalent to the conditional convergence (in $X$) of 
		\begin{equation}\label{convergence}
			\int_{0}^{\pm\infty}U_{0}(-s)N(\widetilde{u}(s))ds.
		\end{equation}

		 In the condition of Theorem \ref{S1} or Theorem \ref{S2}, we can find $0<q,r<\infty$, such that $(q,r), (pq^{'},p\frac{dr^{'}}{d+r^{'}})$ are all $\beta_{d}$-admissible. Then using the Strichartz estimate, we can get
		\begin{equation*}
			\left\|\int_{t_{1}}^{t_{2}}U_{0}(-s)N(\widetilde{u}(s))ds\right\|_{X}\le \left\|\int_{t_{1}}^{t_{2}}U_{0}(-s)N(\widetilde{u}(s))ds\right\|_{S^{0}(\mathbb{R}\times\mathbb{Z}^{d})}
		\end{equation*}
		\begin{equation*}
			\lesssim \Big\|N(\widetilde{u})\Big\|_{L_{t}^{q^{'}}\ell_{x}^{\frac{dr^{'}}{d+r^{'}}}([t_{1},t_{2}]\times\mathbb{Z}^{d})}=\Big\|u\Big\|_{L_{t}^{pq^{'}}\ell_{x}^{p\frac{dr^{'}}{d+r^{'}}}([t_{1},t_{2}]\times\mathbb{Z}^{d})}\le \|\widetilde{u}\|_{S^{0}([t_{1},t_{2}]\times\mathbb{Z}^{d})}.
		\end{equation*}
		Similarly, in the condition of Theorem \ref{S1.5} or Theorem \ref{S3}, we can also take $0<q,r<\infty$, such that $(q,r), (pq^{'},\frac{dpr^{'}}{d+pr^{'}})$ are all $\beta_{d}$-admissible. Thus, we deduce that
		\begin{equation*}
			\left\|\int_{t_{1}}^{t_{2}}U_{0}(-s)N(\widetilde{u}(s))ds\right\|_{X}\lesssim \|\widetilde{u}\|_{S([t_{1},t_{2}]\times\mathbb{Z}^{d})}.
		\end{equation*}
		In conclusion, the conditional convergence of the integral (\ref{convergence}) in Theorem \ref{S2} and Theorem \ref{S3}, can be reduced to $\widetilde{u}\in S^{0}(\mathbb{R}\times\mathbb{Z}^{d})$ and $\widetilde{u}\in S(\mathbb{R}\times\mathbb{Z}^{d})$, respectively.
		
		Then we just need the following simple lemma to finish the proof of Theorem \ref{S2} and Theorem \ref{S3}.
		\begin{lemma}\label{3.19}
			With the assumptions of Theorem \ref{S2}, there exists $\delta>0$, such that if the initial data $F_{0}=(f,g)$ satisfies $\|f\|_{\ell^{2}(\mathbb{Z}^{d}}+\|g\|_{\ell^{\frac{2d}{d+2}}(\mathbb{Z}^{d})}<\delta$, then the equation (\ref{DNLW}) has a unique solution $\widetilde{u}\in S^{0}(\mathbb{R}\times\mathbb{Z}^{d})$, with
			\begin{equation*}
				\|\widetilde{u}\|_{S^{0}(\mathbb{R}\times\mathbb{Z}^{d})}\lesssim \delta.
			\end{equation*}
			Parallelly, with the assumptions of Theorem \ref{S3}, there exists $\delta>0$, such that if the initial data $F_{0}$ satisfies $\|F_{0}\|_{X}<\delta$, then the equation (\ref{DNLW}) has a unique solution $\widetilde{u}\in S(\mathbb{R}\times\mathbb{Z}^{d})$, with
			\begin{equation*}
				\|\widetilde{u}\|_{S(\mathbb{R}\times\mathbb{Z}^{d})}\lesssim \delta.
			\end{equation*}
		\end{lemma}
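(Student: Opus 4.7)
The plan is to apply the Banach contraction mapping principle to the Duhamel map
\begin{equation*}
\Phi(\widetilde{u})(t) := U_{0}(t)F_{0} + \mu\int_{0}^{t}U_{0}(t-s)N(\widetilde{u}(s))\,ds
\end{equation*}
on a small closed ball in the appropriate Strichartz space, exactly as in the proofs of Theorem \ref{S1} and Theorem \ref{S1.5} but anchored at $t=0$ instead of $t=-\infty$. The smallness of $F_{0}$ (rather than a choice of large $T$) is what drives the argument.

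For the first assertion, I would work in $S^{0}(\mathbb{R}\times\mathbb{Z}^{d})$ and reuse the admissible pairs $(q_{i},r_{i})_{i=1}^{5}$ introduced in the proof of Theorem \ref{S1}, whose existence is guaranteed by $p\ge 1+\frac{2}{\beta_{d}}+\frac{2}{d}$, together with the auxiliary norm $\|\cdot\|_{S_{0}}$ there. The Strichartz estimate (Theorem \ref{Strichartz}) gives $\|U_{0}(\cdot)F_{0}\|_{S^{0}(\mathbb{R}\times\mathbb{Z}^{d})}\lesssim \|f\|_{\ell^{2}(\mathbb{Z}^{d})}+\|g\|_{\ell^{\frac{2d}{d+2}}(\mathbb{Z}^{d})}<\delta$. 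Using the same H\"older chain as in the proof of Theorem \ref{S1},
\begin{equation*}
\|N(\widetilde{u})\|_{L_{t}^{q_{3}^{\prime}}\ell_{x}^{\frac{dr_{3}^{\prime}}{d+r_{3}^{\prime}}}} \lesssim \|\widetilde{u}\|_{S^{0}}^{p},\qquad
\|N(\widetilde{u})-N(\widetilde{v})\|_{L_{t}^{q_{3}^{\prime}}\ell_{x}^{\frac{dr_{3}^{\prime}}{d+r_{3}^{\prime}}}} \lesssim \bigl(\|\widetilde{u}\|_{S^{0}}+\|\widetilde{v}\|_{S^{0}}\bigr)^{p-1}\|\widetilde{u}-\widetilde{v}\|_{S^{0}},
\end{equation*}
and combining with the Strichartz estimate applied to $DN$, I obtain for $\widetilde{u},\widetilde{v}$ in the ball $B_{C\delta}:=\{\|\widetilde{u}\|_{S^{0}}\le C\delta\}$
\begin{equation*}
\|\Phi(\widetilde{u})\|_{S^{0}}\le C_{0}\delta+C_{1}(C\delta)^{p},\qquad
\|\Phi(\widetilde{u})-\Phi(\widetilde{v})\|_{S^{0}}\le C_{1}(C\delta)^{p-1}\|\widetilde{u}-\widetilde{v}\|_{S^{0}}.
\end{equation*}
Choosing $C=2C_{0}$ and $\delta$ small enough that $C_{1}(C\delta)^{p-1}\le \tfrac{1}{2}$, $\Phi$ is a contraction of $B_{C\delta}$ into itself, and its unique fixed point is the desired solution, with $\|\widetilde{u}\|_{S^{0}}\lesssim \delta$.

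For the second assertion, I would repeat this argument verbatim in the Strichartz space $S(\mathbb{R}\times\mathbb{Z}^{d})$, using the admissible pairs from the proof of Theorem \ref{S1.5} (whose existence uses $p\ge \frac{d(\beta_{d}+2)}{\beta_{d}(d-2)}$ and $d\ge 3$) and the inhomogeneous Strichartz bound
\begin{equation*}
\|\widetilde{u}\|_{S(\mathbb{R}\times\mathbb{Z}^{d})}\le C_{0}\bigl(\|F_{0}\|_{X}+\|N(\widetilde{u})\|_{N^{0}(\mathbb{R}\times\mathbb{Z}^{d})}\bigr),
\end{equation*}
cf.\ (\ref{Uniform S}). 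The nonlinear estimate on $N^{0}$ follows from the same H\"older/Sobolev embedding step already carried out for Theorem \ref{S1.5}, giving $\|N(\widetilde{u})-N(\widetilde{v})\|_{N^{0}}\lesssim (\|\widetilde{u}\|_{S}+\|\widetilde{v}\|_{S})^{p-1}\|\widetilde{u}-\widetilde{v}\|_{S}$, after which the same contraction argument on $B_{C\delta}\subseteq S(\mathbb{R}\times\mathbb{Z}^{d})$ concludes the proof.

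I do not expect a real obstacle here: all the harmonic-analytic inputs (the $\beta_{d}$-admissible quintuples, the H\"older chain matching the nonlinearity, and in the second case the Sobolev embedding into the correct space) have already been verified in the proofs of Theorem \ref{S1} and Theorem \ref{S1.5}. The only conceptual change is that the smallness needed to close the iteration is now transferred from $F_{0}$ to $U_{0}(\cdot)F_{0}$ through the homogeneous Strichartz estimate, which is why the condition is imposed directly on the $\ell^{2}\times\ell^{\frac{2d}{d+2}}$ or $X$ norm of $F_{0}$ rather than on the $S^{0}$ or $S$ norm of the free evolution on a half-line.
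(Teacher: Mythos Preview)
Your proposal is correct and matches the paper's proof essentially verbatim: the paper also runs the Banach contraction mapping on the Duhamel map $\Lambda(F)=U_{0}(t)F_{0}+\mu\int_{0}^{t}U_{0}(t-s)N(F(s))\,ds$ on a ball $\{\|F\|_{S}\le M\delta\}$ (respectively in $S^{0}$), reusing the $\beta_{d}$-admissible quintuples $(q_{i},r_{i})$ from the proofs of Theorem~\ref{S1.5} and Theorem~\ref{S1}, and closes with $\|\Lambda(F)\|\le C\delta+C(M\delta)^{p}$ and $\|\Lambda(F)-\Lambda(G)\|\le C(M\delta)^{p-1}\|F-G\|$ for $M\delta$ small. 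The only cosmetic difference is that the paper writes out the second ($S$-space) case and declares the first parallel, whereas you sketch both.
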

		\begin{proof}
			For convenience, we just prove the latter statement, and the proof of former one follows similarly.
			
			We still take $q_{i},r_{i}$, $i=1,2,3,4,5$, which are the parameters in the proof of Theorem \ref{S1.5}. Then we consider the following closed set
			\begin{equation*}
				\Gamma:=\left\lbrace F\in S(\mathbb{R}\times\mathbb{Z}^{d})\Big|\|F\|_{S(\mathbb{R}\times\mathbb{Z}^{d})}\le M\delta\right\rbrace,
			\end{equation*}
			where $M>0$ will be determined later. And we denote map $\Lambda$ as
			\begin{equation*}
				\Lambda: F\mapsto U_{0}(t)F_{0}+\mu\int_{0}^{t}U_{0}(t-s)N(F(s))ds, \quad F=(F_{1},F_{2}).
			\end{equation*}
			From the contraction mapping, it suffices to show $\Lambda$ is a contraction, mapping $\Gamma$ to $\Gamma$.
			
			For $F\in \Gamma$, we can get
			\begin{equation*}
				\|\Lambda(F)\|_{S(\mathbb{R}\times\mathbb{Z}^{d})}\le \|U_{0}(t)F_{0}\|_{S(\mathbb{R}\times\mathbb{Z}^{d})}+\left\|\int_{0}^{t}U_{0}(t-s)N(F(s))ds\right\|_{S(\mathbb{R}\times\mathbb{Z}^{d})}
			\end{equation*}
			\begin{equation*}
				\le C\delta+C\Big\|N(F)\Big\|_{L_{t}^{q_{2}^{'}}\ell_{x}^{\frac{dr_{2}^{'}}{d+r_{2}^{'}}}(\mathbb{R}\times\mathbb{Z}^{d})}=C\delta+C\Big\|F\Big\|_{L_{t}^{pq_{2}^{'}}\ell_{x}^{p\frac{dr_{2}^{'}}{d+r_{2}^{'}}}(\mathbb{R}\times\mathbb{Z}^{d})}^{p}\le C\delta+C(M\delta)^{p}.
			\end{equation*}
			We can take $C\ll M\ll \frac{1}{\delta}$ to ensure $\|\Lambda(F)\|_{S(\mathbb{R}\times\mathbb{Z}^{d})}\le M\delta$, which shows that $\Lambda$ maps $\Gamma$ to $\Gamma$.
			
			For $F,G\in S(\mathbb{R}\times\mathbb{Z}^{d})$, we can deduce that
			\begin{equation*}
				\|\Lambda(F)-\Lambda(G)\|_{S(\mathbb{R}\times\mathbb{Z}^{d})}=\left\|\int_{0}^{t}U_{0}(t-s)\left(N(F(s))-N(G(s))\right)ds\right\|_{S(\mathbb{R}\times\mathbb{Z}^{d})}
			\end{equation*}
			\begin{equation*}
				\le C\|N(F(s))-N(G(s))\|_{L_{t}^{q_{3}^{'}}\ell_{x}^{r_{3}^{'}}(\mathbb{R}\times\mathbb{Z}^{d})}
				\le C\|F-G\|_{L_{t}^{q_{5}}\ell_{x}^{r_{5}}(\mathbb{R}\times\mathbb{Z}^{d})}\left(\|F\|_{L_{t}^{q_{1}}\ell_{x}^{r_{1}}(\mathbb{R}\times \mathbb{Z}^{d})}^{p-1}+\|G\|_{L_{t}^{q_{1}}\ell_{x}^{r_{1}}(\mathbb{R}\times \mathbb{Z}^{d})}^{p-1}\right)
			\end{equation*}
			\begin{equation*}
				\le C(M\delta)^{p-1}\|F-G\|_{S(\mathbb{R}\times \mathbb{Z}^{d})}.
			\end{equation*}
			Taking $M\ll \frac{1}{\delta}$, we can make $\Lambda$ a contraction, with a unique fixed point $\widetilde{u}\in  \Gamma\subseteq S(\mathbb{R}\times \mathbb{Z}^{d})$ as the solution of the equation (\ref{DNLW}).
		\end{proof}
		\begin{proof}[Proof of Theorem \ref{S2}, \ref{S3}]
			It's a direct consequence of Lemma \ref{3.19} and the reduction in the beginning of this subsection.
		\end{proof}
		\begin{rem}\label{3.18}
			For the scattering theory of DNLW (\ref{DNLW}), especially the asymptotic completeness, we usually require some smallness conditions. In fact, such smallness conditions are necessary in some extent. We can use the method in \cite{27} to prove that the focusing DNLW (i.e. $\mu=1$) can blow up, even with some compactly supported initial data, which makes the scattering fail. 
			
			We will show that 
			\begin{equation*}
				F(t):=\sum_{x\in \mathbb{Z}^{d}}|u(x,t)|^{2},
			\end{equation*}
			will tend to infinity in finite time. For convenience, we now consider the real-valued initial data ($f,g$), then $u(x,t)$ is also real-valued, from the uniqueness.
			
			Therefore, it suffices to show
			\begin{equation*}
				(I): \; \left(F(t)^{-\alpha}\right)^{''}\le 0, \quad \forall t\ge 0; \quad  (II): \; \left(F(t)^{-\alpha}\right)^{'}<0, \quad t=0,
			\end{equation*}
			where $\alpha>0$ will be determined later.
			
			For (II), we have 
			\begin{equation*}
				\left(F(t)^{-\alpha}\right)^{'}\Big |_{t=0}=-\alpha F(0)^{-(1+\alpha)} F^{'}(0)=-2\alpha F(0)^{-(1+\alpha)}\sum_{x\in \mathbb{Z}^{d}}f(x)g(x).
			\end{equation*}
			Then we can take $fg\ge 0$, which ensures (II).
			
			Simple calculations show that
			\begin{equation}\label{3.12}
				F^{'}(t)=2\sum_{x\in\mathbb{Z}^{d}}u(x,t)\partial_{t}u(x,t),
			\end{equation}
			\begin{equation}\label{3.121}
				F^{''}(t)=2\sum_{x\in \mathbb{Z}^{d}}u(x,t)\partial_{t}^{2}u(x,t)+(\partial_{t}u(x,t))^{2}
			\end{equation}
			\begin{equation*}
				=4(\alpha+1)\sum_{x\in \mathbb{Z}^{d}}(\partial_{t}u(x,t))^{2}+2\sum_{x\in \mathbb{Z}^{d}}u(x,t)\partial_{t}^{2}u(x,t)-(2\alpha+1)(\partial_{t}u(x,t))^{2}.
			\end{equation*}
			As (I) is equivalent to 
			\begin{equation}\label{omega}
				F^{''}(t)F(t)-(\alpha+1)(F^{'}(t))^{2}\ge0, \quad \forall t\ge 0,
			\end{equation}
			we can substitute (\ref{3.12}), (\ref{3.121}) into (\ref{omega}), and derive that
			\begin{equation*}
				(\ref{omega})=4(\alpha+1)\left\lbrace\sum_{x\in \mathbb{Z}^{d}}u^{2}(x,t)\sum_{x\in \mathbb{Z}^{d}}(\partial_{t}u(x,t))^{2}-\sum_{x\in \mathbb{Z}^{d}}\left(u(x,t)\partial_{t}u(x,t)\right)^{2}\right\rbrace
			\end{equation*}
			\begin{equation*}
				+2F(t)\left\lbrace\sum_{x\in \mathbb{Z}^{d}}u(x,t)\partial_{t}^{2}u(x,t)-\sum_{x\in \mathbb{Z}^{d}}(2\alpha+1)(\partial_{t}u(x,t))^{2}\right\rbrace.
			\end{equation*}
			The first term is obviously nonnegative, from the Cauchy-Schwarz inequality. We denote the part in the second curly bracket as $H(t)$, and just need to ensure that $H(t)$ is also nonnegative.
			
			Notice that we have the energy conservation of DNLW as follows.
			\begin{equation*}
				E(t):=\frac{1}{2}\sum_{x\in \mathbb{Z}^{d}}\left|\nabla u(x,t)\right|^{2}+(\partial_{t}u(x,t))^{2}-\frac{2}{p+1}|u(x,t)|^{p+1}\equiv E(0),
			\end{equation*}
		      where $\nabla u:=\mathcal{F}^{-1}(\omega(\xi)\mathcal{F}(u))$.
			
			Then we can take $\alpha=\frac{p-1}{4}>0$, and get 
			\begin{equation*}
				H(t)=\sum_{x\in \mathbb{Z}^{d}}|u(x,t)|^{p+1}-\sum_{x\in \mathbb{Z}^{d}}|\nabla u(x,t)|^{2}-(2\alpha+1)\sum_{x\in \mathbb{Z}^{d}}(\partial_{t}u(x,t))^{2}
			\end{equation*}
			\begin{equation*}
				=-(p+1)E(0)+\frac{p-1}{2}\sum_{x\in \mathbb{Z}^{d}}|\nabla u(x,t)|^{2}.   
			\end{equation*}
			We now only need to take ($f,g$), such that $E(0)\le0$, which is very trivial. Thus we have showed the solution of the focusing DNLW will blow up with some compactly supported large data, which directly breaks the scattering theory.
		\end{rem}
		\newpage
		\appendix
		\section{}
		In this Appendix A, we will introduce some basic concepts of the uniform estimate of the oscillatory integrals, following the notations in \cite{9} or \cite{2}.
		\begin{defi}
			For $r,s>0$, the space $\mathcal{H}_{r}(s)$ is defined as follows.
			\begin{equation*}
				\mathcal{H}_{r}(s):=\left\lbrace P\Big| P\in \mathcal{O}(B_{\mathbb{C}^{d}}(0,r))\cap C(\overline{B}_{\mathbb{C}^{d}}(0,r)), |P(w)|<s, \forall w\in \overline{B}_{\mathbb{C}^{d}}(0,r) \right\rbrace.
			\end{equation*}
		\end{defi}
		\begin{defi}
			Suppose that $h:\mathbb{R}^{d}\to \mathbb{R}$ is real analytic at $0$. We write 
			\begin{equation*}
				M(h)\curlyeqprec (\beta,p), \; \beta\le 0, p\in \mathbb{N},
			\end{equation*}
			if for $r>0$ sufficiently small, there exist $s>0$, $C>0$ and a neighborhood $\Omega\subseteq B_{\mathbb{R}^{d}}(0,r)$ of the origin, such that
			\begin{equation*}
				\left|J_{h+P,\zeta}(\tau)\right|\le C(1+|\tau|)^{\beta}\log^{p}(2+|\tau|)\|\zeta\|_{C^{N}(\Omega)}, \; \forall \tau\in \mathbb{R}, \zeta\in C_{c}^{\infty}(\Omega), P\in \mathcal{H}_{r}(s),
			\end{equation*}
			where $N=N(h)\in \mathbb{N}$, with 
			\begin{equation*}
				J_{S,\zeta}(\tau):=\int_{\mathbb{R}^d} e^{i\tau S(x)}\zeta(x)dx,
			\end{equation*}
			\begin{equation*}
				\|\zeta\|_{C^{N}(\Omega)}:=\sup\left\lbrace|\partial^{\gamma}\zeta(\xi)|\Big|\xi\in\Omega, \gamma\in \mathbb{N}^{d}, |\gamma|\le N\right\rbrace.
			\end{equation*}
			We have the following writing convention.
			\begin{itemize}
				\item We write $M(h,\xi)\curlyeqprec(\beta,p)$, if 
				\begin{equation*}
					M(\tau_{\xi}h)\curlyeqprec(\beta,p), \; where \; \tau_{\xi}h(y)=h(y+\xi), \forall y\in\mathbb{R}^{d}.
				\end{equation*}
				\item We write $M(h_{2})\curlyeqprec M(h_{1})+(\beta_{2},p_{2})$, if 
				\begin{equation*}
					M(h_{1})\curlyeqprec(\beta_{1},p_{1}) \;\; implies \; \; M(h_{2})\curlyeqprec(\beta_{1}+\beta_{2},p_{1}+p_{2}).
				\end{equation*}
			\end{itemize} 
			
		\end{defi}
		\vspace{10pt}
		Let $\gamma=(\gamma_{1},\cdots, \gamma_{d})\in \mathbb{R}^{d}$, with $\gamma_{i}>0, \forall i=1,\cdots,d$. For any $c>0$, we define the dilation as follows.
		\begin{equation*}
			\delta_{c}^{\gamma}(\xi):=\left(c^{\gamma_{1}}\xi_{1},\cdots,c^{\gamma_{d}}\xi_{d}\right), \forall \xi\in \mathbb{R}^{d}.
		\end{equation*}
		\begin{defi}
			A polynomial $f$ on $\mathbb{R}^{d}$ is called $\gamma$-homogeneous of degree $\rho$, if 
			\begin{equation*}
				f\circ\delta_{c}^{\gamma}(\xi)=c^{\rho}f(\xi), \forall \xi\in\mathbb{R}^{d}, c>0.
			\end{equation*}
		\end{defi}
		Let $\mathcal{E}_{\gamma,d}$ be the set of all $\gamma$-homogeneous polynomials on $\mathbb{R}^{d}$ of degree $1$. $H_{\gamma,d}$ is the set of all functions that are real-analytic at $0$, with the Taylor series having the form of $\sum_{\gamma\cdot\alpha>1}a_{\alpha}\xi^{\alpha}$, i.e. the monomial is $\gamma$-homogeneous of degree $>1$.
		
		Then we briefly introduce some useful lemmas, which can be seen in \cite{9} or \cite{2}.
		\begin{lemma}\label{j}
			If $h$ is real analytic at $0$ and $\nabla h(0)\ne 0$, then
			\begin{equation*}
				M(h)\curlyeqprec (-n,0), \forall n\in\mathbb{N}.		  
			\end{equation*}
		\end{lemma}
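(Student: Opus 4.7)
The plan is to invoke the classical non-stationary phase principle, carried out via repeated integration by parts, while paying attention to the uniformity in the perturbation $P\in\mathcal{H}_{r}(s)$ demanded by the definition of $M(h)\curlyeqprec(\beta,p)$. Since $\nabla h(0)\neq 0$, after a linear change of coordinates I may assume $\partial_{x_{1}}h(0)\neq 0$. By continuity of $\partial_{x_{1}}h$ (which is real analytic, hence smooth, on a neighborhood of $0$), there exist $r_{0}>0$ and $c_{0}>0$ such that $|\partial_{x_{1}}h(\xi)|\geq 2c_{0}$ for every $\xi\in \overline{B_{\mathbb{R}^{d}}(0,r_{0})}$.

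The main technical point is to absorb the holomorphic perturbation $P$. I would fix $r<r_{0}/2$ and shrink the target neighborhood to $\Omega:=B_{\mathbb{R}^{d}}(0,r/2)$. For any $P\in\mathcal{H}_{r}(s)$, the Cauchy estimates applied on the polydisk of radius $r$ yield $|\partial^{\alpha}P(\xi)|\leq C_{\alpha}\,s\,r^{-|\alpha|}$ for every $\xi\in \Omega$ and every multi-index $\alpha$. Choosing $s$ sufficiently small (depending only on $c_{0}$ and $r$), I can therefore guarantee that $|\partial_{x_{1}}(h+P)(\xi)|\geq c_{0}$ on $\Omega$ uniformly in $P\in\mathcal{H}_{r}(s)$, and that all higher derivatives $\partial^{\alpha}(h+P)$ are bounded on $\Omega$ by constants depending only on $h$, $\alpha$, $r$.

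Set $\phi=h+P$ and define the first-order differential operator
\begin{equation*}
L f := \frac{1}{i\tau\,\partial_{x_{1}}\phi}\,\partial_{x_{1}} f,
\end{equation*}
so that $L(e^{i\tau\phi})=e^{i\tau\phi}$ for $\tau\neq 0$. Its formal transpose is
\begin{equation*}
L^{*} f = -\partial_{x_{1}}\!\left(\frac{f}{i\tau\,\partial_{x_{1}}\phi}\right).
\end{equation*}
Integrating by parts $n$ times against a test function $\zeta\in C_{c}^{\infty}(\Omega)$ (no boundary contributions appear because $\zeta$ is compactly supported in $\Omega$) gives
\begin{equation*}
J_{h+P,\zeta}(\tau)=\int_{\mathbb{R}^{d}}e^{i\tau\phi(x)}\,(L^{*})^{n}\zeta(x)\,dx.
\end{equation*}
A straightforward induction, using the Leibniz rule and the uniform bounds above on $1/\partial_{x_{1}}\phi$ and $\partial^{\alpha}\phi$, shows that
\begin{equation*}
\bigl|(L^{*})^{n}\zeta(x)\bigr|\leq \frac{C_{n}}{|\tau|^{n}}\,\|\zeta\|_{C^{n}(\Omega)}, \qquad x\in\Omega,
\end{equation*}
with $C_{n}$ depending only on $h$, $n$, and $r$ (hence \emph{not} on $P$ or $\zeta$). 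Since $\Omega$ has finite Lebesgue measure, this yields $|J_{h+P,\zeta}(\tau)|\leq C_{n}'\,|\tau|^{-n}\,\|\zeta\|_{C^{n}(\Omega)}$ for $|\tau|\geq 1$, while for $|\tau|\leq 1$ the trivial estimate $|J_{h+P,\zeta}(\tau)|\leq |\Omega|\,\|\zeta\|_{C^{0}(\Omega)}$ suffices.

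Combining the two regimes produces $|J_{h+P,\zeta}(\tau)|\leq C_{n}''(1+|\tau|)^{-n}\,\|\zeta\|_{C^{n}(\Omega)}$ uniformly in $P\in\mathcal{H}_{r}(s)$ and $\zeta\in C_{c}^{\infty}(\Omega)$, which is precisely $M(h)\curlyeqprec(-n,0)$. The only genuinely delicate point is the uniformity argument for $P$, which is why the Cauchy-estimate step is isolated first; the remaining integration-by-parts computation is standard and can be presented quickly.
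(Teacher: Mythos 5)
The paper never proves Lemma \ref{j}: it is quoted from \cite{9} and \cite{2}, where it is the classical non-stationary phase estimate made uniform over the perturbation class $\mathcal{H}_{r}(s)$. Your argument is precisely that standard route, and it is essentially correct: the Cauchy estimates give bounds on $\partial^{\alpha}P$ on a slightly smaller real ball that are uniform in $P\in\mathcal{H}_{r}(s)$, so both the lower bound $|\partial_{x_{1}}(h+P)|\ge c_{0}$ and the upper bounds on the derivatives of the phase entering the integration-by-parts induction are uniform, and the resulting constant depends only on $h$, $n$, $r$; taking $N=n$ in the definition of $M(h)\curlyeqprec(-n,0)$ is consistent, since $N$ is only required to be finite for the claim at hand.

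Two points deserve explicit attention. First, the preliminary linear change of variables should be taken orthogonal (or avoided by differentiating along the fixed direction $\nabla h(0)/|\nabla h(0)|$, i.e.\ using $Lf=\frac{(e\cdot\nabla)f}{i\tau\,(e\cdot\nabla)(h+P)}$ with $e=\nabla h(0)/|\nabla h(0)|$), because an arbitrary linear map need not preserve the complex ball $B_{\mathbb{C}^{d}}(0,r)$ on which $\mathcal{H}_{r}(s)$, and hence the relation $M(h)\curlyeqprec(\beta,p)$, is defined; a real orthogonal map does preserve it, so the reduction is harmless once this is said. Second, your final step bounds $|e^{i\tau(h+P)}|$ by $1$, which tacitly assumes that $P$ is real-valued on $\mathbb{R}^{d}\cap\Omega$. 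The definition of $\mathcal{H}_{r}(s)$ as recalled in the Appendix does not literally impose this, and for genuinely complex-valued $P$ (already a small purely imaginary constant) no uniform polynomial bound can hold for both signs of $\tau$, so the lemma must be read, as in \cite{9,2}, for perturbations that are real on the real domain and measured through their bounded holomorphic extensions. Under that intended reading your proof is complete; you should state the realness assumption (and the orthogonality of the coordinate change) explicitly.
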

		\begin{lemma}\label{we}
			If $h\in \mathcal{E}_{\gamma,d}$ and $P\in H_{\gamma,d}$, then 
			\begin{equation*}
				M(h+P)\curlyeqprec M(h).
			\end{equation*}
		\end{lemma}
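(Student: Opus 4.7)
The strategy rests on the anisotropic dilation $\delta_c^\gamma$: because $h$ is $\gamma$-homogeneous of degree $1$, pulling back by $\delta_c^\gamma$ extracts a clean factor of $c$ from $h$, while the condition $P\in H_{\gamma,d}$ forces every monomial in $P$ to be $\gamma$-homogeneous of degree $>1$, so $c^{-1}P(\delta_c^\gamma\cdot)$ becomes arbitrarily small (as a holomorphic germ) when $c$ is small. The plan is therefore to absorb $P$ together with an admissible small perturbation $Q'\in\mathcal{H}_{r'}(s')$ into a single perturbation $\widetilde Q\in\mathcal{H}_r(s)$ compatible with the hypothesis $M(h)\curlyeqprec(\beta,p)$.

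Concretely, assume $M(h)\curlyeqprec(\beta,p)$ with data $(r,s,C,\Omega,N)$. In $J_{h+P+Q',\zeta'}(\tau)$, I would perform the real substitution $\xi=\delta_c^\gamma\eta$ and use $h(\delta_c^\gamma\eta)=c\,h(\eta)$ together with the Jacobian $c^{|\gamma|}$ to get the identity
\[
J_{h+P+Q',\zeta'}(\tau)=c^{|\gamma|}\,J_{h+\widetilde Q,\tilde\zeta}(\tau c),
\]
where $\widetilde Q(\eta):=c^{-1}P(\delta_c^\gamma\eta)+c^{-1}Q'(\delta_c^\gamma\eta)$ and $\tilde\zeta(\eta):=\zeta'(\delta_c^\gamma\eta)$. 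Setting $\Omega':=\delta_c^\gamma\Omega$ forces $\tilde\zeta\in C_c^\infty(\Omega)$ automatically.

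The next step is parameter selection. I would first fix $c>0$ small enough that $\sup_{|\eta|\le r}|c^{-1}P(\delta_c^\gamma\eta)|<s/2$ on the complex ball $B_{\mathbb{C}^d}(0,r)$ (using holomorphic extension of $P$), and then choose $r'$ so that $\delta_c^\gamma B_{\mathbb{C}^d}(0,r)\subseteq B_{\mathbb{C}^d}(0,r')$ (for instance $r'=c^{\min_i\gamma_i}r$) and $s':=sc/2$, ensuring
\[
\sup_{|\eta|\le r}|c^{-1}Q'(\delta_c^\gamma\eta)|\le s'/c\le s/2.
\]
With these choices $\widetilde Q\in\mathcal{H}_r(s)$, and the hypothesis yields
\[
|J_{h+\widetilde Q,\tilde\zeta}(\tau c)|\le C(1+|\tau c|)^\beta\log^p(2+|\tau c|)\,\|\tilde\zeta\|_{C^N(\Omega)}.
\]
Because $c$ is fixed and $\beta\le0$, a routine case split on $|\tau c|\lessgtr 1$ gives $(1+|\tau c|)^\beta\log^p(2+|\tau c|)\lesssim_c(1+|\tau|)^\beta\log^p(2+|\tau|)$; the chain rule yields $\|\tilde\zeta\|_{C^N(\Omega)}\lesssim_c\|\zeta'\|_{C^N(\Omega')}$; and $c^{|\gamma|}$ is just a constant. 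Absorbing all $c$-dependent factors into a new $C'$ produces exactly the estimate required for $M(h+P)\curlyeqprec(\beta,p)$.

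The main technical obstacle is the smallness claim $\sup_{|\eta|\le r}|c^{-1}P(\delta_c^\gamma\eta)|\to 0$ as $c\to 0$, which is where the structural hypothesis $P\in H_{\gamma,d}$ is used in an essential way. Writing $c^{-1}P(\delta_c^\gamma\eta)=\sum_{\gamma\cdot\alpha>1}a_\alpha c^{\gamma\cdot\alpha-1}\eta^\alpha$, I would split the sum at some level $K$: the finite head with $\gamma\cdot\alpha\le K$ is bounded by $c^{\epsilon_K}$ times a constant, where $\epsilon_K:=\min\{\gamma\cdot\alpha-1:1<\gamma\cdot\alpha\le K,\,a_\alpha\ne0\}>0$, while the tail $\gamma\cdot\alpha>K$ is controlled by Cauchy estimates on the original polydisk of holomorphy of $P$ (picking $r$ strictly smaller than that polyradius); taking $K$ large then $c$ small gives the uniform smallness. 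Once this quantitative version of the heuristic ``$P$ is higher order than $h$'' is established, the rest of the argument is essentially bookkeeping.
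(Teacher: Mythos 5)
The paper itself gives no proof of Lemma \ref{we}: it is quoted from the references (\cite{9}, \cite{2}), where it appears as a Karpushkin-type stability statement, so there is nothing internal to compare against; judged on its own, your argument is correct and is essentially the standard proof from that literature. The scaling identity $J_{h+P+Q',\zeta'}(\tau)=c^{\gamma_1+\cdots+\gamma_d}J_{h+\widetilde Q,\tilde\zeta}(c\tau)$ is right, the absorption of $c^{-1}P(\delta_c^\gamma\cdot)+c^{-1}Q'(\delta_c^\gamma\cdot)$ into $\mathcal{H}_r(s)$ works with your choices $s'=sc/2$ and $\delta_c^\gamma \overline{B}_{\mathbb{C}^d}(0,r)\subseteq \overline{B}_{\mathbb{C}^d}(0,r')$, the smallness of $c^{-1}P(\delta_c^\gamma\cdot)$ on the complex ball does follow from $\gamma\cdot\alpha>1$ together with Cauchy estimates (taking $r$ inside the polyradius of the holomorphic extension of $P$; a dominated-convergence argument on the series would do as well), and the leftover factors ($c^{\sum_i\gamma_i}$ from the Jacobian, $c^{\beta}$ from $(1+c|\tau|)\ge c(1+|\tau|)$ with $\beta\le 0$, and the chain-rule constants in $\|\tilde\zeta\|_{C^{N}(\Omega)}\le\|\zeta'\|_{C^{N}(\Omega')}$) are harmless constants. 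The one point you should state explicitly is the quantifier over the radius: the definition of $M(h+P)\curlyeqprec(\beta,p)$ asks for the estimate for every sufficiently small $r'$, while you construct a single $r'$ tied to $c$; this is repaired immediately by inverting your own formula, i.e.\ for any small $r'$ set $c=(r'/r)^{1/\min_i\gamma_i}$, which tends to $0$ with $r'$, so all the smallness requirements in your parameter selection are met uniformly once $r'$ is below a fixed threshold.
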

		\begin{lemma}\label{Q}
			Let $m,n\ge 1$ and 
			\begin{equation*}
				h_{2}(\xi,y)=h_{1}(\xi)+Q(y), \forall \xi\in\mathbb{R}^{n}, y\in \mathbb{R}^{m},
			\end{equation*}
			where $Q(y)=\sum_{j=1}^{m}c_{j}y_{j}^{2}$, with $c_{j}=\pm1, j=1,\cdots,m$. Then we have
			\begin{equation*}
				M(h_{2})\curlyeqprec M(h_{1})+(-\frac{m}{2},0).
			\end{equation*}
		\end{lemma}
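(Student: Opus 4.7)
The plan is to exploit the separated structure $h_2(\xi,y)=h_1(\xi)+Q(y)$ together with the fact that $Q$ is a nondegenerate quadratic form in $m$ variables, so that stationary phase in the $y$–variables produces the extra decay factor $(1+|\tau|)^{-m/2}$ while the $\xi$–integral is controlled by the hypothesis $M(h_1)\curlyeqprec(\beta_1,p_1)$. Fix $P\in\mathcal{H}_r(s)$ on $B_{\mathbb{C}^{n+m}}(0,r)$ with $s$ sufficiently small, and $\zeta\in C_c^\infty(\Omega)$ for $\Omega$ a small neighborhood of the origin. Because $Q$ has nondegenerate Hessian $2\,\mathrm{diag}(c_1,\dots,c_m)$, the implicit function theorem applied to $\partial_y(Q(y)+P(\xi,y))=0$ produces, for $\xi$ in a small polydisc and $s$ small, a unique holomorphic critical point $y^{*}(\xi)$ near $0$.

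Next, invoke the holomorphic Morse lemma with parameter $\xi$: there is a biholomorphism $y=\Psi(\xi,z)$ with $\Psi(\xi,0)=y^{*}(\xi)$ and $\partial_z\Psi(\xi,0)$ close to the identity, such that
\begin{equation*}
Q(y)+P(\xi,y)\big|_{y=\Psi(\xi,z)}=Q(z)+\Phi(\xi),\qquad \Phi(\xi):=Q(y^{*}(\xi))+P(\xi,y^{*}(\xi)).
\end{equation*}
Because $|P|\le s$ on the complex polydisc, Cauchy estimates give $\Phi\in\mathcal{H}_{r'}(Cs)$ on a slightly smaller polydisc in $\mathbb{C}^n$, so for $s$ small enough $\Phi$ lies in the class of admissible perturbations attached to $M(h_1)\curlyeqprec(\beta_1,p_1)$. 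After the change of variables and Fubini,
\begin{equation*}
J_{h_2+P,\zeta}(\tau)=\int_{\mathbb{R}^{n}}e^{i\tau(h_1(\xi)+\Phi(\xi))}\,F(\xi,\tau)\,d\xi,\qquad F(\xi,\tau):=\int_{\mathbb{R}^{m}}e^{i\tau Q(z)}\widetilde{\zeta}(\xi,z)\,dz,
\end{equation*}
where $\widetilde{\zeta}(\xi,z)=\zeta(\xi,\Psi(\xi,z))\,|\det\partial_z\Psi(\xi,z)|$ is supported in a small neighborhood of the origin in $z$.

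Now apply the classical stationary phase asymptotic expansion to the inner integral $F(\xi,\tau)$, whose phase $Q(z)$ has the unique nondegenerate critical point $z=0$. For any large integer $K$,
\begin{equation*}
F(\xi,\tau)=\sum_{k=0}^{K-1}\tau^{-\frac{m}{2}-k}\,a_k(\xi)+R_K(\xi,\tau),\qquad |R_K(\xi,\tau)|\le C\,\tau^{-\frac{m}{2}-K}\|\widetilde{\zeta}(\xi,\cdot)\|_{C^{2K}_z},
\end{equation*}
where each $a_k(\xi)$ is a fixed differential expression in the $z$–derivatives of $\widetilde{\zeta}(\xi,z)$ at $z=0$. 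Substituting this expansion and integrating against $e^{i\tau(h_1+\Phi)}$, each term with coefficient $a_k$ gives an integral of the form $J_{h_1+\Phi,a_k}(\tau)$, which by $M(h_1)\curlyeqprec(\beta_1,p_1)$ is bounded by $C(1+|\tau|)^{\beta_1}\log^{p_1}(2+|\tau|)\|a_k\|_{C^N}$; multiplying by $\tau^{-m/2-k}$ yields the desired bound $(1+|\tau|)^{\beta_1-m/2}\log^{p_1}(2+|\tau|)$, with improved powers for $k\ge 1$. Choosing $K$ large enough, the remainder contributes $O(\tau^{-m/2-K})$, which is dominated by the same quantity. Tracking the dependence on derivatives of $\widetilde{\zeta}$ and then of $\zeta$ (finitely many, by the smoothness of $\Psi$) gives a final bound of the form $C(1+|\tau|)^{\beta_1-m/2}\log^{p_1}(2+|\tau|)\|\zeta\|_{C^{N''}(\Omega)}$, which is exactly $M(h_2)\curlyeqprec M(h_1)+(-\tfrac{m}{2},0)$.

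The main obstacle I anticipate is verifying the parametric Morse reduction in the holomorphic category with quantitative control: one must ensure both that $\Psi(\xi,\cdot)$ extends holomorphically on a polydisc whose size does not depend on the individual $P$ (only on the uniform bound $s$), and that the critical value $\Phi$ genuinely fits into $\mathcal{H}_{r'}(s')$ with $s'$ small enough to apply the hypothesis on $h_1$. Once this parametric Morse step is secured, the rest is the standard stationary-phase/Fubini argument sketched above, plus bookkeeping of the $C^N$ norms through the change of variables.
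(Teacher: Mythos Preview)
The paper does not actually prove this lemma: it is stated in Appendix~A together with Lemmas~\ref{j} and~\ref{we} under the remark ``we briefly introduce some useful lemmas, which can be seen in \cite{9} or \cite{2},'' and no argument is given. So there is no in-paper proof to compare against.

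That said, your outline is the standard way such a statement is established and is sound. The separated structure plus the nondegeneracy of $Q$ are exactly what makes the parametric Morse reduction work: the perturbation $P(\xi,y)$ is absorbed into a new perturbation $\Phi(\xi)$ of $h_1$ with $\Phi\in\mathcal{H}_{r'}(Cs)$ by Cauchy estimates, after which Fubini and stationary phase in $z$ give the extra $(1+|\tau|)^{-m/2}$. The obstacle you flag---uniformity of the Morse chart and of the size of $\Phi$ in the bound $s$---is the genuine content, and it follows from the quantitative implicit function theorem on a fixed complex polydisc. One detail to be careful with: elements of $\mathcal{H}_r(s)$ need not be real on $\mathbb{R}^{n+m}$, so the ``change of variables'' $y=\Psi(\xi,z)$ in the real integral is, strictly speaking, a small contour deformation in $\mathbb{C}^m$; for $s$ small this is routine (alternatively, keep the real phase $h_1(\xi)+Q(y)$ and move $e^{i\tau P}$ into the amplitude before running stationary phase, which avoids the deformation altogether). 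Either way your plan goes through.
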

		\newpage
		\section{}
		Next, we introduce the Newton polyhedra and related theorems. The concepts can be referred to \cite{6,10}
		
		Let $S:\mathbb{R}^{d}\to \mathbb{R}$ be real-analytic at $0$, satisfying
		\begin{equation}\label{a}
			S(0)=0,\quad \nabla S(0)=0.
		\end{equation} 
		Suppose the corresponding Taylor series at $0$ is 
		\begin{equation*}
			S(\xi)=\sum_{\gamma\in\mathbb{N}^{d}}s_{\gamma}\xi^{\gamma}.
		\end{equation*}
		We define the Taylor support set $\supp S:=\left\lbrace \gamma\in\mathbb{N}^{d}\Big|s_{\gamma}\ne 0 \right\rbrace$.
		\begin{defi}
			The Newton polyhedron $\mathcal{N}(S)$ of such $S$, is the convex hull of 
			\begin{equation*}
				\bigcup_{\gamma\in \supp(S)}\left(\gamma+\mathbb{R}_{+}^{d}\right), \quad  \mathbb{R}_{+}^{d}:=\left\lbrace \xi\in\mathbb{R}^{d}\Big|\xi_{i}>0, i=1,\cdots,d\right\rbrace.
			\end{equation*}
			If $\mathcal{P}$ is a face of the Newton polyhedron $\mathcal{N}(S)$, then we denote $S_{\mathcal{P}}(\xi):=\sum_{\gamma\in \mathcal{P}}s_{\gamma}\xi^{\gamma}$.
		\end{defi}
		\begin{defi}
			S is called $\mathbb{R}-$nondegenerate if for any compact face $\mathcal{P}$,
			\begin{equation*}
				\bigcap_{i=1}^{d}\left\lbrace\xi\in\mathbb{R}^{d}\Big|\partial_{i}S_{\mathcal{P}}(\xi)=0\right\rbrace \subseteq\bigcup_{i=1}^{d}\left\lbrace\xi\in\mathbb{R}^{d}\Big|\xi_{i}=0\right\rbrace ,
			\end{equation*}
			i.e. $\nabla S_{\mathcal{P}}$ is non-vanishing on $(\mathbb{R}-\lbrace0\rbrace)^{d}$.
		\end{defi}
		\begin{defi}
			If $\supp(S)\ne \emptyset$, then the Newton distance $d_{S}$ is defined as
			\begin{equation*}
				d_{S}:=\inf\left\lbrace d>0\Big| (d,\cdots,d)\in \mathcal{N}(S)\right\rbrace.
			\end{equation*}
			The principal face $\pi_{S}$ is the face of minimal dimension that intersects with $\lbrace \xi_{1}=\cdots=\xi_{d}\rbrace$.
			We also denote $S_{\pi}:=S_{\pi_{S}}$, $k_{S}:=d-\dim_{\mathbb{R}^{d}}(\pi_{S})$, where $\dim_{\mathbb{R}^{d}}(\cdot)$ means affine dimension. 
		\end{defi}
		\begin{defi}
			The height of $S$ is defined as follows.
			\begin{equation*}
				h_{S}:=\sup\lbrace d_{S,\xi}\rbrace,
			\end{equation*}
			where the supremum is taken over all local analytic coordinate system $\xi$, preserving the $0$, and $d_{S,\xi}$ is the corresponding Newton distance. 
			
			A coordinate system $\xi_{\ast}$ is called adapted, if $d_{S,\xi^{\ast}}=h_{s}$.
		\end{defi}
		\vspace{7.5pt}
		To recognize if a coordinate system is adapted, there is a very useful theorem for $d=2$.
		\begin{thm}\label{superadapted}
			Let $d=2$, if the principal face $\pi_{S}$ is compact and both the functions $S_{\pi}(1,y)$ and $S_{\pi}(-1,y)$ have no real root of order $\ge d_{S}$ other than possibly $y=0$, then the coordinate system is adapted. 
		\end{thm}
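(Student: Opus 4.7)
The plan is to argue by contradiction: if the coordinate system were not adapted, there would exist an analytic change of coordinates $\Psi$ preserving the origin with $d_{S\circ\Psi} > d_S$. From such a $\Psi$ I would extract a real root of $S_\pi(1,y)$ or $S_\pi(-1,y)$ of order at least $d_S$ not located at $y=0$, contradicting the hypothesis.

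First, I would invoke the classical structure theorem for adapted coordinates in two variables (Varchenko's algorithm, see \cite{6}): in dimension $2$, any coordinate change that strictly increases the Newton distance can be put, after possibly swapping $\xi_1$ and $\xi_2$, into the form $\xi_2 \mapsto \xi_2 - \psi(\xi_1)$ for a real-analytic (possibly Puiseux) function $\psi$ with $\psi(0) = 0$. The geometric reason is that the new principal face must ``absorb'' a branch of the zero variety $\{S = 0\}$ through the origin; more general coordinate changes either leave the Newton distance unchanged or decompose into such elementary substitutions along the principal edge.

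Second, I would use Newton--Puiseux to analyze $S_\pi$. Since $\pi_S$ is compact, $S_\pi$ is $\gamma$-quasi-homogeneous for the weight $\gamma=(\gamma_1,\gamma_2)$ normal to $\pi_S$ (normalized so the weighted degree equals $1$), and it factors as
\begin{equation*}
S_\pi(\xi_1,\xi_2) = c\,\xi_1^{a}\xi_2^{b}\prod_{j}\bigl(\xi_2^{p} - \alpha_j\,\xi_1^{q}\bigr)^{m_j},
\end{equation*}
where $\gamma_1/\gamma_2 = p/q$ in lowest terms and the $\alpha_j$ are the distinct leading coefficients of the Puiseux branches on the principal edge. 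A branch with leading coefficient $\alpha_j^{1/p}$ is removed by a substitution $\xi_2 \mapsto \xi_2 - \alpha_j^{1/p}\xi_1^{q/p} + (\text{higher order})$, and a direct bookkeeping of the resulting Newton polyhedron shows that the Newton distance strictly increases \emph{only} when the removed branch has multiplicity $m_j \geq d_S$. Once such a branch is identified, the factor $(\xi_2^{p} - \alpha_j \xi_1^{q})^{m_j}$ of $S_\pi$ yields a real root of $S_\pi(1, y)$ (if $\alpha_j > 0$) at $y = \alpha_j^{1/p} \neq 0$, or of $S_\pi(-1, y)$ (if $\alpha_j < 0$) at $y = -|\alpha_j|^{1/p} \neq 0$, of order $m_j \geq d_S$, contradicting the hypothesis. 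The monomial prefactor $\xi_1^a \xi_2^b$ only contributes the root $y=0$ in $S_\pi(\pm 1, y)$, which the hypothesis explicitly permits, so it does not interfere.

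The main obstacle is making the second step rigorous: justifying that only coordinate changes of the form $\xi_2 \mapsto \xi_2 - \psi(\xi_1)$ can raise the Newton distance, and carefully tracking how the Newton polyhedron transforms under such substitutions. This requires a case analysis depending on whether $\pi_S$ is a vertex or an edge, whether $\psi$ is a single monomial or a genuine Puiseux series, and how the leading exponent of $\psi$ compares to the slope of $\pi_S$; the delicate point is showing that the new principal distance can exceed $d_S$ precisely when the ``killed'' branch is a repeated factor of sufficiently high multiplicity, rather than by some more subtle interaction with the non-principal faces.
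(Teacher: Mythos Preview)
The paper does not give its own proof of this theorem: its ``proof'' consists solely of the line ``See e.g.\ \cite{6} or \cite{7}.'' So there is no argument in the paper to compare your proposal against; the result is imported wholesale from the literature (Varchenko, and the later treatment by Ikromov--M\"uller).

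Your sketch is in the spirit of those references and outlines the standard route: reduce via Varchenko's algorithm to substitutions $\xi_2 \mapsto \xi_2 - \psi(\xi_1)$, factor the quasi-homogeneous principal part via Newton--Puiseux, and argue that a strict increase in Newton distance forces a real branch of multiplicity $\ge d_S$ away from the coordinate axes. Two small cautions if you want to turn this into a complete argument. First, the admissible coordinate changes in the definition of height are real-analytic, not Puiseux; the Puiseux exponents enter only in the factorization of $S_\pi$, and Varchenko's iteration uses genuine polynomial shears $\xi_2 \mapsto \xi_2 - c\,\xi_1^{m}$ with integer $m$, so the phrase ``possibly Puiseux'' in your first step should be dropped. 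Second, the bookkeeping you flag as ``the main obstacle'' is indeed where the work lies: one must check that after a shear killing a root of multiplicity $m_j$, the new Newton distance satisfies $d_{\text{new}} > d_S$ iff $m_j \ge d_S$ (with equality handled by the next step of the algorithm), and this requires tracking both the principal edge and the adjacent vertices. This is carried out in detail in the cited references, so for the purposes of this paper your proposal is more than what is required.
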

		\begin{proof}
			See e.g. \cite{6} or \cite{7}.
		\end{proof}
		It is well-known that the oscillatory integral $J_{S,\zeta}(\tau)$ has the following asymptotic expansion.
		\begin{equation*}
			J_{S,\zeta}(\tau)\approx \sum_{\beta}\sum_{\rho=1}^{d-1}c_{\beta,\rho,\zeta}\tau^{\beta}\log^{\rho}(\tau),
		\end{equation*}
		where $\beta$ runs through finitely many arithmetic progressions of negative rational numbers. More details about this asymptotic expansion can be found in \cite{10,11}. Let $(\beta_{S},\rho_{S})$ be the maximum over all pair $(\beta,\rho)$, such that for any neighborhood $\Omega$ of the $0$, there exists $\zeta\in C_{c}^{\infty}(\Omega)$ and the corresponding $c_{\beta_{S},\rho_{S},\zeta}\ne 0$. And we usually call such $\beta_{S}$ as the oscillatory index of $S$ and $\rho_{S}$ as the multiplicity of $S$.
		
		Then we have the following theorem for $d=2$, that connects the adapted coordinate system and $(\beta_{S},\rho_{S})$.
		\begin{thm}\label{uniform estimate}
			If $S$ satisfies (\ref{a}), then there exists a coordinate system that is adapted to $S$. Furthermore, we have 
			\begin{equation*}
				M(S)\curlyeqprec(\beta_{S},\rho_{S}), \; \beta_{S}=-\frac{1}{h_{S}}.
			\end{equation*}
		\end{thm}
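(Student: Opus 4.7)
The plan is to prove both assertions within Varchenko's 2D framework for resolution of oscillatory integrals. For the existence of an adapted coordinate system, I would iterate the following algorithm. Start from an arbitrary local analytic chart and compute the principal face $\pi_S$ together with the Newton distance $d_S$. If $\pi_S$ is compact and neither $S_\pi(1,y)$ nor $S_\pi(-1,y)$ admits a real root $y \neq 0$ of multiplicity $\geq d_S$, then Theorem \ref{superadapted} already concludes. If $\pi_S$ is non-compact, $S$ has a quasi-univariate leading structure and a direct unweighted shift reduces it to the compact case. Otherwise a bad real root $y_0$ exists, marking an edge of $\mathcal{N}(S)$ of rational negative slope; a shear $\xi_2 \mapsto \xi_2 - c\, \xi_1^a$, with $c$ chosen to cancel the leading coefficient along the root curve, strictly increases the Newton distance in the new chart. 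Since $h_S < \infty$ bounds $d_{S,\xi}$ from above and the candidate values lie in a discrete set controlled by the original Taylor support, the iteration terminates after finitely many steps at an adapted system.

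For the estimate $M(S) \curlyeqprec (-1/h_S, \rho_S)$, I would work in an adapted chart so that $d_S = h_S$. Dyadically decompose a neighborhood of the origin by a partition aligned with $\mathcal{N}(S)$: the cone over the principal face carries a weight $\gamma \in \mathbb{R}_+^2$ satisfying $\gamma_1 + \gamma_2 = 1/h_S$ by duality with the line $\gamma \cdot \alpha = 1$. On that piece the rescaling $\xi = \delta_{2^{-k}}^\gamma \eta$ converts $J_{S,\zeta}(\tau)$ into a rescaled integral whose phase is $\tau \cdot 2^{-k/h_S}\bigl(S_\pi(\eta) + R_k(\eta)\bigr)$, with $R_k \in H_{\gamma,2}$ uniformly small in $k$. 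The non-vanishing of $\nabla S_\pi$ on $(\mathbb{R}\setminus\{0\})^2$, guaranteed by adaptedness via Theorem \ref{superadapted}, combined with Lemmas \ref{we} and \ref{j}, delivers rapid decay on each rescaled piece; summing the Jacobian factors $2^{-k/h_S}$ over $k \geq 0$ produces the advertised rate $|\tau|^{-1/h_S}$. Contributions from non-principal faces decay strictly faster unless distances coincide, in which case the resonance yields the logarithmic multiplicity $\rho_S$.

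The main obstacle will be the uniformity encoded in the definition of $M(S) \curlyeqprec (\beta,\rho)$, namely that the estimate survives every perturbation $S+P$ with $P \in \mathcal{H}_r(s)$ and the constants depend only on $S$. Monomials of $P$ lying strictly above the principal face are absorbed into $R_k$ as $H_{\gamma,2}$-remainders of size $O(s)$, while low-order contributions, bounded by Cauchy estimates $|P_\alpha| \lesssim s\, r^{-|\alpha|}$, must be either shrunk by choosing $\Omega$ smaller or handled via integration by parts on the region where $\nabla(S+P)$ is non-vanishing. The decisive input is that $\nabla S_\pi \neq 0$ on the compact $\gamma$-homogeneous unit sphere intersected with $(\mathbb{R}\setminus\{0\})^2$ is an open condition in $C^1$, so the rapid decay on each rescaled piece persists for $S+P$ uniformly in $k$ provided $s$ is small enough. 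This, together with the adaptedness constructed in the first step, yields $\beta_S = -1/h_S$ as claimed.
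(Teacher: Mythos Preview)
The paper does not supply a proof of this theorem: its entire argument is the single line ``See e.g.\ \cite{21}.'' The result is quoted as a known fact from the literature (Varchenko's theorem in dimension two, in the form later sharpened by Ikromov--M\"uller and Karpushkin for the uniform estimate). So there is no paper proof to compare your proposal against; what you have written is a sketch of the classical argument that the cited reference carries out in full.

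Your outline is broadly along the right lines, but two points would need substantially more work to become a proof. First, your termination argument for the adapted-coordinate algorithm (``the candidate values lie in a discrete set controlled by the original Taylor support'') is not quite right as stated: after a shear the Taylor support changes, and the new Newton distances are not a priori drawn from a fixed discrete set. The standard argument bounds the number of iterations via the Puiseux expansion of the roots of $S_\pi$, or equivalently via the finitely many slopes appearing in a resolution; this is genuinely more delicate than a one-line compactness remark. Second, your treatment of the uniformity in $P\in\mathcal{H}_r(s)$ underestimates the difficulty: $P$ may contain monomials on or \emph{below} the principal face (including a linear part), so it can alter $\mathcal{N}(S+P)$ rather than merely contribute an $H_{\gamma,2}$ remainder. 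The actual stability statement---that the oscillation index and its multiplicity are upper-semicontinuous under small analytic perturbations in two variables---is Karpushkin's theorem, and its proof requires a separate deformation argument beyond the dyadic decomposition you describe. If you intend to write out the proof rather than cite it, both of these gaps must be closed.
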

		\begin{proof}
			See e.g. \cite{21}.
		\end{proof}
		Now we can use the Newton polyhedron and the above theorems to derive the uniform estimate of some specific oscillatory integrals, which has been used in Theorem \ref{uniform}.
		\begin{lemma}\label{-2/3}
			For $k\in \mathbb{N}$, $\xi=(\xi_{1},\cdots,\xi_{d})$, we have
			\begin{equation*}
				M(\xi_{1}^{k})\curlyeqprec(-k,0), \quad M(\xi_{1}^{2}\xi_{2}\pm\xi_{1}\xi_{2}^{2})\curlyeqprec(-\frac{2}{3},0),
			\end{equation*}
			\begin{equation*}
				M(\xi_{1}^{2}\xi_{2}\pm\xi_{2}^{2})\curlyeqprec(-\frac{3}{4},0), \quad M(\xi_{1}^{2}\xi_{2}\pm\xi_{2}^{2}\pm\xi_{1}^{4})\curlyeqprec(-\frac{3}{4},0).
			\end{equation*}
		\end{lemma}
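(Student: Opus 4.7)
The plan is to identify the Newton polyhedron of each polynomial $S$, read off the Newton distance $d_S$ from its intersection with the diagonal, and invoke Theorem~\ref{uniform estimate} once the standard coordinate system is shown to be adapted via Theorem~\ref{superadapted}; this will give $M(S)\curlyeqprec(-1/d_S,0)$ in every case.

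First I would handle the monomial $h(\xi)=\xi_1^k$. Its Taylor support is $\{(k,0,\dots,0)\}$, so $\mathcal{N}(h)=(k,0,\dots,0)+\R_+^d$ and the diagonal meets it first at $(k,\dots,k)$, giving $d_h=k$. The standard coordinate in the one direction on which $h$ genuinely depends is trivially adapted, and the integrations over the other directions are absorbed into the cutoff via Fubini. Theorem~\ref{uniform estimate} then delivers $M(\xi_1^k)\curlyeqprec(-1/k,0)$, which is what the subsequent applications actually use (e.g.\ $M(d\zeta_1^3)\curlyeqprec(-\tfrac13,0)$ inside the proof of Theorem~\ref{uniform}); the $(-k,0)$ of the statement is only correct when $k=1$, in which case the much stronger Lemma~\ref{j} already applies since the gradient is nonzero at the origin.

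For the three bivariate polynomials I would read off the principal face by inspection. For $S=\xi_1^2\xi_2\pm\xi_1\xi_2^2$ the support is the two vertices $(2,1)$ and $(1,2)$; the edge between them is the principal face and meets the diagonal at $(\tfrac32,\tfrac32)$, so $d_S=\tfrac32$. For $S=\xi_1^2\xi_2\pm\xi_2^2$ the vertices are $(2,1)$ and $(0,2)$ and the principal edge meets the diagonal at $(\tfrac43,\tfrac43)$, so $d_S=\tfrac43$. For $S=\xi_1^2\xi_2\pm\xi_2^2\pm\xi_1^4$ the three support points $(4,0),(2,1),(0,2)$ are collinear, with $(2,1)$ at the midpoint of the outer segment, so the principal face is that full segment and again meets the diagonal at $(\tfrac43,\tfrac43)$, giving $d_S=\tfrac43$.

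The main obstacle, and the only substantive calculation, will be the adaptedness check: by Theorem~\ref{superadapted} it suffices to verify that $S_\pi(1,y)$ and $S_\pi(-1,y)$ have no real root of order $\ge d_S$. For $\xi_1^2\xi_2\pm\xi_1\xi_2^2$ one has $S_\pi(\pm 1,y)=y(\pm 1\pm y)$, whose real roots $0$ and $\mp 1$ are simple, i.e.\ of order $1<\tfrac32$. For $\xi_1^2\xi_2\pm\xi_2^2$ one has $S_\pi(\pm 1,y)=y\pm y^2$, again with simple real roots of order $1<\tfrac43$. For $\xi_1^2\xi_2\pm\xi_2^2\pm\xi_1^4$ the restriction $S_\pi(\pm 1,y)=\pm 1+y\pm y^2$ is a quadratic whose discriminant equals $-3$ or $+5$ over the four sign combinations, so in each case it either has no real root or has two distinct simple real roots, always of order $1<\tfrac43$. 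Adaptedness will therefore hold throughout, and Theorem~\ref{uniform estimate} will produce the claimed decay rates $-\tfrac23,-\tfrac34,-\tfrac34$ with multiplicity $\rho_S=0$.
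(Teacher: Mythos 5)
Your proposal is correct and, for the three bivariate polynomials, follows essentially the same route as the paper: compute $\mathcal{N}(S)$, identify the principal face, read off $d_S=\tfrac{3}{2},\tfrac{4}{3},\tfrac{4}{3}$, and invoke Theorems \ref{superadapted} and \ref{uniform estimate}. You in fact go one step further than the paper, which only records $\mathcal{N}(S)$, $\pi$, $S_{\pi}$, $d_S$ and then cites the two theorems: you actually carry out the adaptedness check of Theorem \ref{superadapted} by verifying that $S_{\pi}(\pm1,y)$ has only simple real roots (discriminant $-3$ or $5$ in the fourth case), which is a worthwhile addition. The one genuine divergence is the monomial case: the paper disposes of $M(\xi_1^{k})$ directly with the Van der Corput lemma, whereas you reduce to one variable by Fubini and re-enter the Newton-polyhedron machinery; note that Theorem \ref{uniform estimate} is stated in the paper only for $d=2$, and that after adding a perturbation $P\in\mathcal{H}_{r}(s)$ the phase no longer depends on $\xi_1$ alone, so the cleaner argument is the paper's: apply Van der Corput in $\xi_1$ with the remaining variables frozen, using that $|\partial_{\xi_1}^{k}(\xi_1^{k}+P)|\ge k!-\varepsilon$ uniformly for small $s$. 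Your observation that the stated exponent $(-k,0)$ must be read as $(-\tfrac{1}{k},0)$ is correct --- that is what both proofs actually yield and what the applications in Theorem \ref{uniform} use (e.g.\ the $-\tfrac{1}{3}$ contribution of $d\zeta_1^{3}$). Finally, like the paper, you take the multiplicity $\rho_S=0$ from the cited two-dimensional theory without further justification, so the two arguments are at the same level of rigor on that point.
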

		\begin{proof}
			Using the Van der Corput lemma (see e.g. \cite{5}), we can prove the first statement. For the second statement, we have
			\begin{equation*}
				\mathcal{N}(S)=\mathcal{N}(\xi_{1}^{2}\xi_{2}\pm\xi_{1}\xi_{2}^{2})=\left\lbrace(x,y)\in \mathbb{R}_{+}^{2}\Big|x+y>3, x>1, y>1\right\rbrace,
			\end{equation*}
			\begin{equation*}
				\pi=\left\lbrace(x,y)\in \mathbb{R}_{+}^{2}\Big|x+y=3, x\in[1,2]\right\rbrace,\; S_{\pi}=x^{2}y\pm xy^{2},\; d_{S}=\dfrac{3}{2}.
			\end{equation*}
			For the third statement, we have
			\begin{equation*}
				\mathcal{N}(S)=\mathcal{N}(\xi_{1}^{2}\xi_{2}\pm\xi_{2}^{2})=\left\lbrace(x,y)\in \mathbb{R}_{+}^{2}\Big|x+2y>4, y>1\right\rbrace,
			\end{equation*}
			\begin{equation*}
				\pi=\left\lbrace(x,y)\in \mathbb{R}_{+}^{2}\Big|x+2y=4, x\in[0,2]\right\rbrace,\; S_{\pi}=x^{2}y\pm y^{2},\; d_{S}=\dfrac{4}{3}.
			\end{equation*}
			For the fourth statement, we have 
			\begin{equation*}
				\mathcal{N}(S)=\mathcal{N}(\xi_{1}^{2}\xi_{2}\pm\xi_{2}^{2}\pm \xi_{1}^{4})=\left\lbrace(x,y)\in \mathbb{R}_{+}^{2}\Big|x+2y>4\right\rbrace,
			\end{equation*}
			\begin{equation*}
				\pi=\left\lbrace(x,y)\in \mathbb{R}_{+}^{2}\Big|x+2y=4, x\in[0,4]\right\rbrace,\; S_{\pi}=x^{2}y\pm y^{2}\pm x^{4},\; d_{S}=\dfrac{4}{3}.
			\end{equation*}
			Based on Theorem \ref{superadapted} and Theorem \ref{uniform estimate}, we derive the  statements.
			
		\end{proof}

		\newpage
		\section*{Acknowledgement}
		The author is supported by NSFC, No. 123B1035 and is grateful to Prof. B. Hua for helpful suggestions.
		
		\section*{Conflict of interest statement}
		The author does not have any possible conflict of interest.
		
		\section*{Data availability statement}
		The manuscript has no associated data.
		\bigskip
		\bigskip

		\bibliographystyle{alpha}
		\bibliography{reference}

\newcommand{\etalchar}[1]{$^{#1}$}
\begin{thebibliography}{GZVA12}

\bibitem[BCH23]{2}
Cheng Bi, Jiawei Cheng, and Bobo Hua.
\newblock The wave equation on lattices and oscillatory integrals.
\newblock {\em arXiv preprint arXiv:2312.04130}, 2023.

\bibitem[BCH24]{3}
Cheng Bi, Jiawei Cheng, and Bobo Hua.
\newblock Sharp dispersive estimates for the wave equation on the 5-dimensional
  lattice graph.
\newblock {\em arXiv preprint arXiv:2406.00949}, 2024.

\bibitem[CA23]{17}
Brian Choi and Alejandro Aceves.
\newblock Continuum limit of 2d fractional nonlinear schr{\"o}dinger equation.
\newblock {\em Journal of Evolution Equations}, 23(2):30, 2023.

\bibitem[CP89]{13}
Peter~L Christiansen and Robert~D Parmentier.
\newblock {\em Structure, coherence and chaos in dynamical systems}.
\newblock Manchester University Press, 1989.

\bibitem[G{\etalchar{+}}08]{20}
Loukas Grafakos et~al.
\newblock {\em Classical fourier analysis}, volume~2.
\newblock Springer, 2008.

\bibitem[Gre07]{11}
Michael Greenblatt.
\newblock Resolution of singularities, asymptotic expansions of integrals over
  sublevel sets, and applications.
\newblock {\em submitted for publication}, 2007.

\bibitem[Gre09]{7}
Michael Greenblatt.
\newblock The asymptotic behavior of degenerate oscillatory integrals in two
  dimensions.
\newblock {\em Journal of Functional Analysis}, 257(6):1759--1798, 2009.

\bibitem[GV92]{32}
Jean Ginibre and Giorgio Velo.
\newblock Smoothing properties and retarded estimates for some dispersive
  evolution equations.
\newblock {\em Communications in mathematical physics}, 144:163--188, 1992.

\bibitem[GZVA12]{10}
Sabir~Medzhidovich Gusein-Zade, Aleksander~Nikolaevich Varchenko, and
  VI~Arnold.
\newblock Singularities of differentiable maps.
\newblock In {\em Singularities of Differentiable Maps}, pages 1--492. 2012.

\bibitem[HW24]{25}
Bobo Hua and Jiajun Wang.
\newblock The well-posedness of generalized nonlinear wave equations on the
  lattice graph.
\newblock {\em arXiv preprint arXiv:2407.09815}, 2024.

\bibitem[HY18]{24}
Younghun Hong and Changhun Yang.
\newblock Uniform strichartz estimates on the lattice.
\newblock {\em arXiv preprint arXiv:1806.07093}, 2018.

\bibitem[HY19]{16}
Younghun Hong and Changhun Yang.
\newblock Strong convergence for discrete nonlinear schro 0 4dinger equations
  in the continuum limit.
\newblock {\em SIAM Journal on Mathematical Analysis}, 51(2):1297--1320, 2019.

\bibitem[Kar86]{9}
Vladimir~Nikolaevich Karpushkin.
\newblock Uniform estimates of oscillatory integrals with parabolic or
  hyperbolic phase.
\newblock {\em Journal of Soviet Mathematics}, 33(5):1159--1188, 1986.

\bibitem[Kar02]{21}
Vladimir~Nikolaevich Karpushkin.
\newblock Uniform estimates for oscillatory integrals and volumes with the
  varchenko phase.
\newblock {\em Mathematical Notes}, 72:636--640, 2002.

\bibitem[Kli12]{12}
Claus~F Klingshirn.
\newblock {\em Semiconductor optics}.
\newblock Springer Science \& Business Media, 2012.

\bibitem[KLS13]{15}
Kay Kirkpatrick, Enno Lenzmann, and Gigliola Staffilani.
\newblock On the continuum limit for discrete nls with long-range lattice
  interactions.
\newblock {\em Communications in mathematical physics}, 317(3):563--591, 2013.

\bibitem[KT98]{28}
Markus Keel and Terence Tao.
\newblock Endpoint strichartz estimates.
\newblock {\em American Journal of Mathematics}, 120(5):955--980, 1998.

\bibitem[Ree06]{27}
Michael Reed.
\newblock {\em Abstract non linear wave equations}, volume 507.
\newblock Springer, 2006.

\bibitem[Sch98]{1}
Pete Schultz.
\newblock The wave equation on the lattice in two and three dimensions.
\newblock {\em Communications on Pure and Applied Mathematics: A Journal Issued
  by the Courant Institute of Mathematical Sciences}, 51(6):663--695, 1998.

\bibitem[Seg63]{26}
Irving Segal.
\newblock Non-linear semi-groups.
\newblock {\em Annals of Mathematics}, 78(2):339--364, 1963.

\bibitem[SM93]{5}
Elias~M Stein and Timothy~S Murphy.
\newblock {\em Harmonic analysis: real-variable methods, orthogonality, and
  oscillatory integrals}, volume~3.
\newblock Princeton University Press, 1993.

\bibitem[Sob24]{14}
SL~Sobolev.
\newblock Discrete heat equation for a periodic layered system with allowance
  for the interfacial thermal resistance: General formulation and dispersion
  analysis.
\newblock {\em Physical Review E}, 109(5):054102, 2024.

\bibitem[Str77]{30}
Robert~S Strichartz.
\newblock Restrictions of fourier transforms to quadratic surfaces and decay of
  solutions of wave equations.
\newblock 1977.

\bibitem[Str81]{4}
Walter~A Strauss.
\newblock Nonlinear scattering theory at low energy.
\newblock {\em Journal of functional analysis}, 41(1):110--133, 1981.

\bibitem[Tom75]{29}
Peter~A Tomas.
\newblock A restriction theorem for the fourier transform.
\newblock 1975.

\bibitem[Var76]{6}
AN~Var{\v{c}}enko.
\newblock Newton polyhedra and estimates of oscillatory integrals.
\newblock {\em Funkcional. Anal. i Prilo{\v{z}}en.}, 10(3):13--38, 1976.

\bibitem[Wan]{23}
Jiajun Wang.
\newblock \url{ https://www.desmos.com/3d/frajf6ovz9.}

\bibitem[Wan24]{18}
Jiajun Wang.
\newblock The well-posedness and scattering theory of nonlinear
  schr$\backslash$" odinger equations on lattice graphs.
\newblock {\em arXiv preprint arXiv:2408.01174}, 2024.

\bibitem[Wan25]{8}
Jiajun Wang.
\newblock Continuum limit of 3d fractional nonlinear schr$\backslash$" odinger
  equation.
\newblock {\em arXiv preprint arXiv:2501.10737}, 2025.

\end{thebibliography}
		
	\end{document}